\newcommand{\nm}[1]{\left\|#1\right\|}
\def\jump#1{\llbracket #1 \rrbracket }
\newcommand{\mykappa}{m}
\newcommand{\ip}[2]{\left(#1,#2\right)}
\newcommand{\ipI}[2]{\left(#1,#2\right)_{I_j}}
\newcommand{\dg}[2]{\mathcal{H}\left(#1,#2\right)}
\newcommand{\sig}{\sigma}
\newcommand{\cF}{\mathcal{F}}
\newcommand{\cZ}{\mathcal{Z}}
\newcommand{\cY}{\mathcal{Y}}
\newcommand{\quand}{\quad\text{and}\quad}
\newcommand{\hf}{\frac{1}{2}}
\newcommand{\dth}{\frac{\mathrm{d}}{\mathrm{d}\hat{t}}}
\crefname{hypothesis}{Hypothesis}{Hypotheses}
\title{OEDG: Oscillation-eliminating discontinuous Galerkin\\ method for hyperbolic conservation laws
	\thanks{
			M.~Peng and K.~Wu was partially supported by Shenzhen Science and Technology Program (No.~RCJC20221008092757098) and 
			National Natural Science Foundation of China (No.~12171227). 
			Z.~Sun was partially supported by NSF grant DMS-2208391.}}
\author{Manting Peng\thanks{Department of Mathematics, Southern University of Science and Technology, Shenzhen, Guangdong 518055, China.}
	\and Zheng Sun\thanks{Department of Mathematics, The University of Alabama, Tuscaloosa, AL 35487, USA 
		(\email{zsun30@ua.edu}).}
	\and Kailiang Wu\thanks{Corresponding author. Department of Mathematics and SUSTech International Center for Mathematics, Southern University of Science and Technology, and National Center for Applied Mathematics Shenzhen (NCAMS), Shenzhen, Guangdong 518055, China (\email{wukl@sustech.edu.cn}).}} 
\newcommand{\ipK}[2]{\ip{#1}{#2}_K}
\newcommand{\nn}{{n_\star}}
\newcommand{\ff}{g}
\newcommand{\FF}{G}
\begin{document}

\maketitle

\begin{abstract}
Controlling spurious oscillations is crucial for designing reliable numerical schemes for hyperbolic conservation laws. This paper proposes a novel, robust, and efficient oscillation-eliminating discontinuous Galerkin (OEDG) method on general meshes, motivated by the damping technique in 
[Lu, Liu, and Shu, {\em SIAM J. Numer. Anal.}, 59:1299--1324, 2021].  
The OEDG method incorporates an OE procedure after each Runge--Kutta stage, and it is devised by alternately evolving the conventional semidiscrete DG scheme and a damping equation. 
A novel damping operator is carefully designed to possess both {\em scale-invariant} and {\em evolution-invariant} properties.  
We rigorously prove {\em optimal error estimates} of the {\em fully discrete} OEDG method for smooth solutions of linear scalar conservation laws. This might be the first generic fully-discrete error estimates for {\em nonlinear} DG schemes with an automatic oscillation control mechanism. 
The OEDG method exhibits many notable advantages. It effectively eliminates spurious oscillations for challenging problems spanning various scales and wave speeds, all without necessitating problem-specific parameters. It also obviates the need for characteristic decomposition in hyperbolic systems. Furthermore, it retains the key properties of the conventional DG method, such as conservation, optimal convergence rates, and superconvergence.  
Moreover, the OEDG method maintains stability under the normal CFL condition, even in the presence of strong shocks associated with highly stiff damping terms. 
The OE procedure is {\em non-intrusive}, facilitating seamless integration into existing DG codes as an independent module. Its implementation is straightforward and efficient, involving only simple multiplications of modal coefficients by scalars. 
The OEDG approach provides new insights into the damping mechanism for oscillation control. It reveals the role of the damping operator as a modal filter and establishes close relations between the damping technique and spectral viscosity techniques. 
Extensive numerical results confirm the theoretical analysis and validate the effectiveness and advantages of the OEDG method.

\end{abstract}

\begin{keywords}
Hyperbolic conservation laws, discontinuous Galerkin method, oscillation control, damping technique, scale-invariant, optimal error estimates
\end{keywords}

\begin{MSCcodes}
65M60, 65M12, 35L65
\end{MSCcodes}

\section{Introduction}


Discontinuous Galerkin (DG) methods, originally introduced by Reed and Hill \cite{reed1973triangular} in 1973, are a class of finite element methods employing discontinuous piecewise polynomial spaces. 
Since the series of pioneering work by Cockburn and Shu \cite{rkdg1,rkdg2,rkdg3,rkdg4,rkdg5}, the DG methods coupled with Runge--Kutta (RK) time discretization have become a prominent approach for solving hyperbolic conservation laws of the form 
\begin{equation}\label{HCL}
	{\bf u}_t + \nabla \cdot {\bf f}({\bf u}) = {\bf 0}, 
\end{equation}
with applications in diverse fields including computational fluid dynamics. In this paper, we further develop the DG methods and focus on a novel numerical approach to control spurious oscillations near discontinuities, such as shocks.

The solution of nonlinear hyperbolic conservation laws may develop discontinuities within finite time, even starting from smooth initial conditions. This poses significant challenges in numerical simulations, as many schemes may generate spurious oscillations near these discontinuities. 
Such oscillations can give rise to nonphysical wave structures, numerical instability, or even result in blow-up solutions. Consequently, controlling spurious oscillations is paramount in the design of robust numerical schemes for hyperbolic conservation laws.
For DG methods,  two primary strategies exist to mitigate oscillations. The first strategy is to apply suitable limiters, such as the total variation diminishing limiter, total variation bounded limiter, and weighted essentially nonoscillatory type limiters; see, for example,   \cite{shu2009discontinuous,qiu2005runge,zhong2013simple}. These limiters serve to adjust the slopes or point values of the solution in some troubled cells, acting as postprocessors for the DG solution after each time step or RK stage. While effective, limiters may be problem-dependent and can potentially alter some desirable properties of the original DG scheme. 
Another strategy is to incorporate appropriate artificial viscosity terms with second or higher order spatial derivatives to diffuse off the oscillations. We refer to \cite{zingan2013implementation,hiltebrand2014entropy,yu2020study,huang2020adaptive} and the references therein for related works.  


In recent works \cite{lu2021oscillation,LiuLuShu_OFDG_system}, Lu, Liu, and Shu systematically developed the so-called oscillation-free DG (OFDG) method, which innovatively incorporates damping terms to suppress spurious oscillations. The semidiscrete OFDG method can be expressed as an ordinary differential equation (ODE) system:   
\begin{equation}\label{eq:OFDG}
	\frac{{\rm d} {\bf U} }{{\rm d}t} + {\bm L}_{\bf f}({\bf U}) + {\bf \Sigma}({\bf U}){\bf U} = {\bf 0}, 
\end{equation}
where ${\bf U}$ denotes the DG modal coefficients, ${\bm L}_{\bf f}({\bf U})$ corresponds to the conventional DG spatial discretization of $\nabla \cdot {\bf f}({\bf u})$, and ${\bf \Sigma}({\bf U}){\bf U}$ represents the added damping terms penalizing the deviation between the DG solution and its low-order polynomial projections.  
The OFDG method shares a similar flavor with local projection stabilization schemes \cite{becker2004two,braack2006local}. 
A notable feature of the OFDG scheme is the inclusion of predefined damping coefficients ${\bf \Sigma}({\bf U})$, which allows for its applicability to a wide range of common problems. 
In theory, Lu, Liu, and Shu \cite{lu2021oscillation} rigorously established   
the stability, optimal error estimates, and superconvergence of the OFDG method for scalar conservation laws. 
The extension to  systems of hyperbolic conservation laws was further studied in \cite{LiuLuShu_OFDG_system}, and  applications to other equations were explored in  \cite{liu2022oscillation,tao2023oscillation,du2023oscillation}.   
However, for discontinuous problems, the damping terms render the ODE system \eqref{eq:OFDG} highly stiff and lead to very restricted time step sizes for explicit schemes, as mentioned in \cite{LiuLuShu_OFDG_system}. 
To mitigate this challenge, 
the (modified) exponential RK methods \cite{huang2018bound} are typically required \cite{LiuLuShu_OFDG_system}.

This paper proposes and analyzes a new efficient DG approach, termed 
oscillation-eliminating DG (OEDG) method, designed on general meshes. 
While our focus is on hyperbolic conservation laws, the OEDG method is applicable to  general time-dependent partial differential equations (PDEs) with discontinuous solutions. 
The OEDG approach is inspired by the OFDG method \cite{lu2021oscillation,LiuLuShu_OFDG_system}, 
incorporating a similar oscillation control strategy but from a different novel perspective, which results in
significant enhancements in implementation and robustness. 
The new innovations, contributions,  and findings in this work are detailed in the following subsections.  

\subsection{Algorithmic innovations and contributions}  
The OEDG method is designed by alternately evolving the conventional semidiscrete DG scheme for \eqref{HCL} and a quasi-linear damping ODE: 
\begin{equation}\label{eq:OEDG1}
	\frac{{\rm d} {\bf U} }{{\rm d}t} = - {\bm L}_{\bf f}({\bf U}) \quad \mbox{and} \quad \frac{{\rm d} {\bf U}_\sigma }{{\rm d}t} = - \widetilde {\bf \Sigma}({\bf U}){\bf U}_\sigma. 
\end{equation}
The damping equation is devised for eliminating spurious oscillations. Here  
$\widetilde{\bf \Sigma}({\bf U})$ is a new damping operator, which differs from ${\bf \Sigma}({\bf U})$, and is carefully constructed to achieve the scale-invariant and evolution-invariant properties. 
With orthogonal DG basis functions, $\widetilde{\bf \Sigma}({\bf U})$ simplifies to a diagonal matrix.   
Notably, once $\widetilde{\bf \Sigma}({\bf U})$ is fixed, the quasi-linear damping ODE in \eqref{eq:OEDG1} can be {\em solved exactly} using a simple exponential operator. 
This splitting strategy is integrated into the RK stages or multi-steps. For example, with a second-order RK method, the fully discrete OEDG scheme is represented as  
\begin{equation}\label{eq:RK2-OEDG} 
	\begin{aligned}
		{\bf U}^{*} &= {\bf U}_\sig^n - \tau {\bm L}_{\bf f}({\bf U}_\sig^n),&\quad {\bf U}_\sig^{*} &= e^{-\tau \widetilde{\bf \Sigma}({\bf U}^{*})}{\bf U}^{*},\\
		{\bf U}^{n+1} &= \frac12 ( {\bf U}_\sig^n + {\bf U}_\sig^{*}) -\frac{ \tau }2 {\bm L}_{\bf f}({\bf U}_\sig^{*}),&\quad {\bf U}_\sig^{n+1} &= e^{-\tau \widetilde{\bf \Sigma}({\bf U}^{n+1})}{\bf U}^{n+1}. 
	\end{aligned}
\end{equation} 
Note that although the OEDG method employs a seemingly ``first-order'' splitting \eqref{eq:OEDG1} in the time evolution, this does {\em not} affect the order of accuracy, since the damping term $- \widetilde {\bf \Sigma}({\bf U}){\bf U}_\sigma$ is a high-order term for smooth solutions. Indeed, the optimal convergence rates of the OEDG method can be rigorously proven in theory; see \Cref{sec:err}.

The OEDG method offers many notable advantages. First, it 
effectively eliminates the spurious oscillations {\em for problems across different scales and wave speeds, without relying on problem-specific parameters}. It also 
retains many desirable properties of the original DG method, including {\em conservation}, {\em high-order accuracy}, {\em optimal fully-discrete error estimates} in theory, and numerical {\em superconvergence} for linear scalar conservation laws.  
Moreover, thanks to the exact solver for the damping equation in \eqref{eq:OEDG1} with the new coefficients $\widetilde{\bf \Sigma}({\bf U})$, the OEDG method exhibits some distinctive features not found in the OFDG method:   
	\begin{itemize}[leftmargin=*]
		\item 	{\em Non-intrusive}: The OE procedure is fully detached from the RK stage update and does not interfere with the DG spatial discretization. Hence it can be incorporated non-intrusively into existing DG codes as an independent module. This makes the OEDG approach applicable to general time-dependent PDEs with discontinuous solutions.  Note that such a non-intrusive feature is not possessed by the (modified) exponential RK method employed in the OFDG method \cite{LiuLuShu_OFDG_system}. 
		\item {\em Simple and efficient}: 
		Implementing the OE procedure is straightforward and efficient on general meshes, as it only involves the multiplication of modal coefficients by scalars. 
		\item {\em Stable with normal time step sizes}: 
		Thanks to the exact solver for the OE step, the OEDG method remains stable under the normal CFL condition, even in the presence of strong shocks associated with highly stiff damping terms. Unlike the OFDG method, 
		there is no need to use the (modified) exponential RK methods to avoid stringent time step restrictions. 
	\end{itemize}
	\begin{itemize}[leftmargin=*]
	\item {\em Scale-invariant and evolution-invariant}: 
	The OEDG method adopts the new damping operator $\widetilde{\bf \Sigma}({\bf U})$, which is carefully designed such that the damping effect remains scale-invariant and evolution-invariant. These key properties are crucial for guaranteeing the oscillation-free property, enabling the OEDG method to perform consistently well for problems across various scales and wave speeds.  
	It is observed that the damping operator ${\bf \Sigma}({\bf U})$ employed in the OFDG method, as described in \cite[Equation (2.12)]{lu2021oscillation}, lacks information about wave speed and scales linearly with respect to the solution magnitude: ${\bf \Sigma}(\lambda {\bf U}) = |\lambda| {\bf \Sigma}({\bf U})$. While this damping operator performs well for many common problems, it may exhibit 
	excessive smearing or persistent spurious oscillations for ultra-fast/ultra-slow  flows or solutions with large/small magnitudes; see  \Cref{fig:ex1_B}. 
	\item {\em Free of characteristic decomposition}: 
	The OEDG method does not require characteristic decomposition for hyperbolic systems, as the OE procedure applied directly to the conservative variables typically deliver
		satisfactory results. This feature simplifies the implementation and reduces the computational costs.
		It also distinguishes the OEDG method from the OFDG method with non-scale-invariant damping, in which characteristic decomposition 
		can be necessary for some challenging problems (as seen in \cite[Figure 3.5]{LiuLuShu_OFDG_system} and \Cref{tb} in the present paper).   
		Note that, without scale-invariant damping coefficients, it can be difficult to select the appropriate scale of eigenvectors in the characteristic
		decomposition, because eigenvectors are nonunique and can be scaled by a nonzero
		scalar. Different scaled eigenvectors will result in different characteristic variables, which lead to different damping strengths. Consequently, if the scheme lacks scale invariance, its 
		numerical performance heavily relies on the choice of eigenvectors, as improper selections can result in excessive smearing or persistent spurious oscillations. 
\end{itemize}
All these features highlight the differences between the OEDG and OFDG methods. 
	The last two features can be attributed to our novel damping operator. 
It is worth noting that this new operator, if integrated into the OFDG method in place of its original damping operator, may also offer these two features.

\subsection{New insights and findings}

By decoupling the OE step from the RK stage update, the OEDG method provides  
new insights into understanding the damping mechanism for oscillation control.     
Based on the exact solver for the damping equation in \eqref{eq:OEDG1}, we discover that the damping operator acts as a modal filter, revealing close relations between the oscillation-free damping technique and the spectral viscosity techniques  \cite{gottlieb2001spectral,hesthaven2007spectral,hesthaven2008filtering}. This perspective may not be obviously observed in the OFDG method  \cite{lu2021oscillation,LiuLuShu_OFDG_system}.  
In addition, the exact solver for the damping equation leads to insights into designing the scale-invariant and evolution-invariant damping operator. 
As shown in \eqref{eq:RK2-OEDG}, the coefficients $\tau \widetilde{\bf \Sigma}({\bf U})$  
as arguments of an exponential must be dimensionless. The scale-invariant property 
$\widetilde{\bf \Sigma}(\lambda {\bf U} + \mu) = \widetilde{\bf \Sigma}({\bf U})$ for any nonzero $\lambda$ 
and the inclusion of wave speed in the damping coefficients are crucial to ensure that $\tau \widetilde{\bf \Sigma}({\bf U})$ is dimensionless.

\subsection{Theoretical contributions and innovations}

We carry out a comprehensive fully-discrete error analysis of the proposed OEDG methods of arbitrarily high order accuracy in space and time. We rigorously establish the 
optimal error estimates of the fully-discrete OEDG methods for the linear advection equation with smooth solutions on Cartesian meshes. 
It is worth noting that, despite the linearity of the model equation considered in the analysis, the OE procedure transforms the DG scheme into a nonlinear one. Consequently, the error analysis is quite nontrivial and presents technical challenges akin to those encountered in fully discrete error estimates for nonlinear hyperbolic equations. 
A key innovation in our analysis lies in ingeniously transforming the nonlinear OEDG formulation into a linear RKDG method augmented by a nonlinear source term. 
This transformation allows us to leverage the techniques developed in  \cite{xu2020error,xu2020superconvergence,ai20222} for the original RKDG methods. 
One of the major difficulties in our analysis pertains to estimating the perturbation of the numerical solution during the OE step based on appropriate a priori assumptions, which is then integrated into the fully-discrete error analysis of the OEDG method. These estimates introduce  essential complexities not encountered in the semidiscrete error analysis of the OFDG method \cite{lu2021oscillation}. 
To the best of our knowledge, the presented optimal error analysis might be the first generic fully-discrete error estimates of nonlinear DG schemes with an automatic oscillation control mechanism for conservation laws.

\begin{figure}[!htb]
	\centering
	\begin{subfigure}[b]{0.48\textwidth}
		\begin{center}
			\includegraphics[width=1.0\linewidth]{./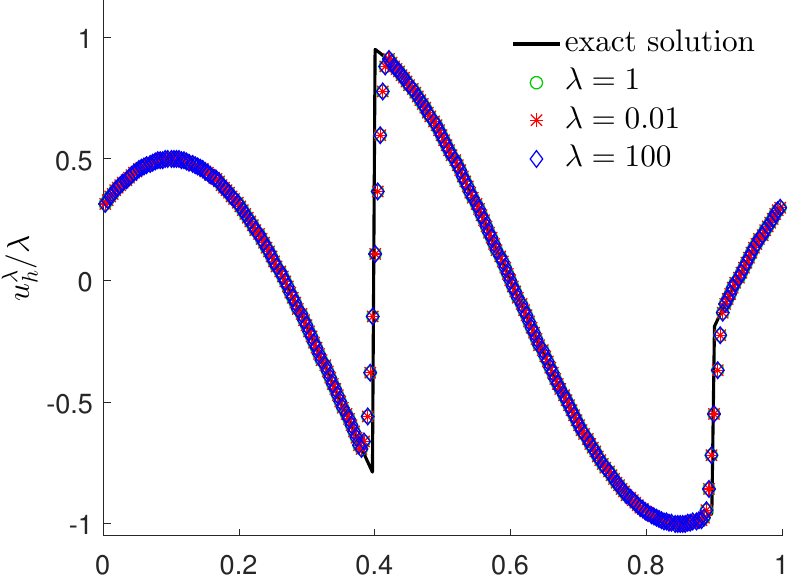}
			\caption{OEDG solution $u_h^\lambda/\lambda$}
			\label{fig1:a}
			\vspace{3.6mm}
		\end{center}
	\end{subfigure}
	\begin{subfigure}[b]{0.48\textwidth}
		\centering
		\includegraphics[width=1.0\linewidth]{./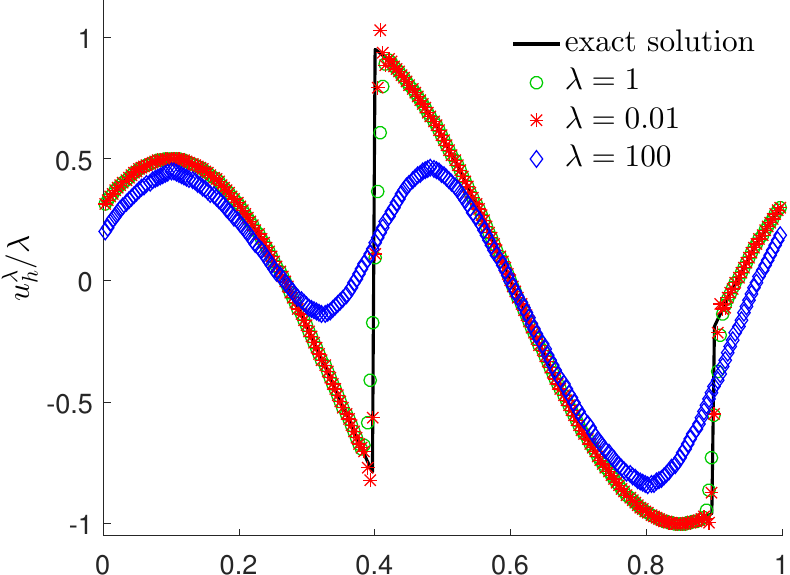}
		\caption{OFDG solution $u_h^\lambda/\lambda$}
		\label{fig1:b}
		\vspace{3.6mm}
	\end{subfigure}
	\centering
	\begin{subfigure}[b]{0.48\textwidth}
		\begin{center}
			\includegraphics[width=1.0\linewidth]{./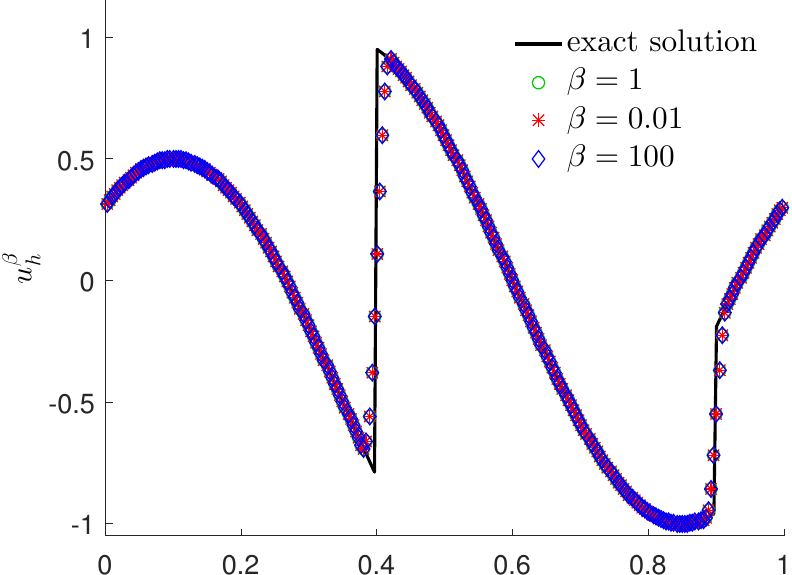}
			\caption{OEDG solution $u_h^\beta$}
			\label{fig1:c}
		\end{center}
	\end{subfigure}
	\begin{subfigure}[b]{0.48\textwidth}
		\centering
		\includegraphics[width=1.0\linewidth]{./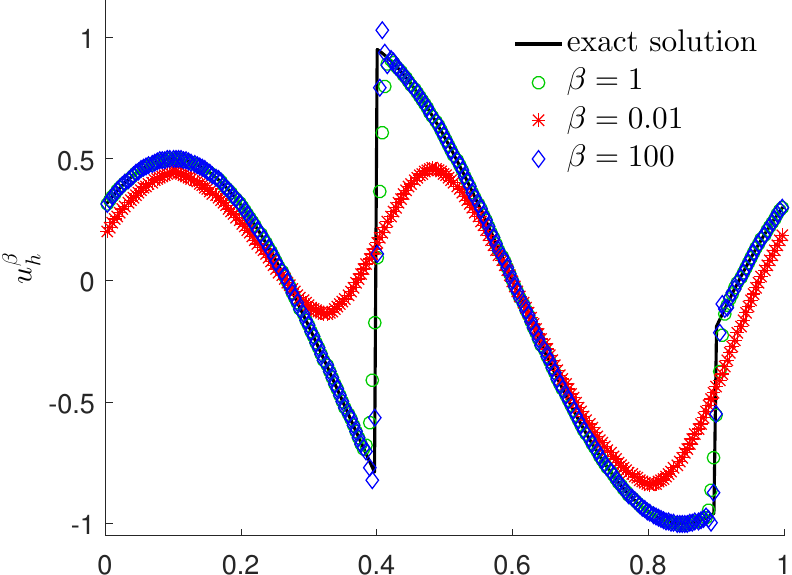}
		\caption{OFDG solution $u_h^\beta$}
		\label{fig1:d}
	\end{subfigure}
	\caption{Comparisons of third-order OEDG and OFDG methods for problems of different scales and wave speeds. The cell averages of the DG solutions are plotted:  $u_h^\lambda$ denotes numerical solutions at $t=1.1$ for $u_t + u_x =0$ with scaled initial data $u(x,0)=\lambda u_0(x)$;  $u_h^\beta$ denotes numerical solutions at $t=1.1/\beta$ for $u_t + \beta u_x =0$ with $u(x,0)= u_0(x)$, where $u_0(x)$ is defined in \eqref{eq:u0_exp2}. See \Cref{ex2} for the detailed setup. 
		The OFDG method exhibits  
		excessive smearing or persistent spurious oscillations (overshoots or undershoots) in large or small scale cases, while OEDG method performs consistently well thanks to scale invariance and evolution invariance.  The results of OFDG method can be improved if our new scale-invariant and evolution-invariant damping is used instead. 
	}
	\label{fig:ex1_B}
\end{figure}

This paper is structured as follows: \Cref{sec:1d-oedg} introduces the OEDG method for one-dimensional hyperbolic conservation laws, with extensions to multiple dimensions discussed in \Cref{sec:md}. Optimal error estimates of the fully discrete OEDG method for linear advection equation are established in \Cref{sec:err}.   \Cref{sec:num} provides extensive numerical examples to verify the effectiveness and advantages of the OEDG method, before conclusions in \Cref{sec:conclusion}.


\section{OEDG method for one-dimensional conservation laws} \label{sec:1d-oedg}
This section presents the OEDG scheme for 1D hyperbolic conservation laws
\begin{equation}\label{eq:1DHCL}
	u_t + f(u)_x = 0. 
\end{equation}
We first assume \eqref{eq:1DHCL} is a scalar conservation law, and will discuss the extension to hyperbolic systems later in \Cref{sec:1Dsystem}. 

\subsection{Formulation of OEDG method} 
Let $\Omega = \cup_j I_j$ with $I_j = [x_{j-\hf},x_{j+\hf}]$ be a partition of the 1D spatial domain $\Omega$. Denote $h_j = x_{j+\hf}-x_{j-\hf}$.  
The DG finite element space is defined as follows:
\begin{equation*}
	{\mathbb V}^k := \left \{ v \in L^2(\Omega):~ v |_{I_j} \in \mathbb P^k(I_j)~\forall j \right \},
\end{equation*}
where $\mathbb P^k(I_j)$ is the space of polynomials of degree less than or equal to $k$ on $I_j$. 

Let us first review the conventional DG method. 
The conventional semidiscrete DG method seeks the numerical solutions $u_h(\cdot,t)  \in {\mathbb V}^k$ such that for any test functions $v \in {\mathbb V}^k$, 
\begin{equation}\label{eq:1DDG}
	\int_{I_j} (u_h)_t v {\rm d}x = \int_{I_j} f(u_h) v_x {\rm d}x - 
	\hat{f}_{j+\hf} v_{j+\hf}^- + \hat{f}_{j-\hf} v_{j-\hf}^+,  
\end{equation}
where $v_{j+\hf}^\pm :=\lim_{\epsilon \to 0^+} v( x_{j+\hf} \pm \epsilon )$, and $\hat{f}_{j+\hf}$ is a suitable numerical flux, for example, a monotone flux in the scalar case. The semidiscrete scheme \eqref{eq:1DDG} can be rewritten in an ODE form $\frac{\rm d}{{\rm d}t}u_h = L_f(u_h)$, which can be further discretized in time with some high-order accurate RK or multi-step methods.  
For example, the 
DG method coupled with an $r$th-order $s$-stage RK method reads   
\begin{equation*}
	\begin{aligned}
		u_h^{n,0} &= u_h^{n},  \\
		u_h^{n,\ell+1} &= \sum_{0\leq \mykappa \leq \ell} \left(c_{\ell \mykappa}u_h^{n,\mykappa} + \tau d_{\ell \mykappa} L_f(u_h^{n,\mykappa}) \right),\quad \ell =0,1,\dots,s-1, \\
		u_h^{n+1} &= u_h^{n,s},
	\end{aligned} 
\end{equation*}
where $u_h^{n}$ is the numerical solution at $n$th time level, $\tau$ is the time step-size, and $\sum_{0\leq \mykappa\leq \ell} c_{\ell\mykappa} = 1$.

Define $u_\sig^{0}=u_h^{0}$. 
Now we present the OEDG method with an OE procedure applied after each RK stage: 
\begin{align}
	u_h^{n,\ell+1} &= \sum_{0\leq \mykappa \leq \ell} \left(c_{\ell \mykappa}u_\sigma^{n,\mykappa} + \tau d_{\ell \mykappa} L_f(u_\sigma^{n,\mykappa}) \right),\label{eq:rk}\\
	u_\sig^{n,\ell+1} &= \cF_\tau u_h^{n,\ell+1},\qquad \ell =0,1,\dots,s-1. \label{eq:OEstep}
\end{align}
For ease of notation, we define $u_\sig^{n,0} = u_\sig^n$ and $u_\sig^{n+1} = u_\sig^{n,s}$. 
Here \eqref{eq:OEstep} is the OE procedure, with 
$\cF_\tau : {\mathbb V}^k \to {\mathbb V}^k$ being the solution operator of a damping equation. 
More specifically, we define $(\cF_\tau u_h^{n,\ell+1})(x) = u_\sig(x,\tau)$ with $u_\sig(x,\hat t) \in {\mathbb V}^k$ being the solution to the following initial value problem: 
\begin{equation}\label{eq:filter}
	\left\{  
	\begin{aligned}
		&{\frac{\mathrm{d}}{\mathrm{d} \hat t} \int_{I_j}{u_\sig}{v}{\rm d} x} + \sum_{m = 0}^k 
		\beta_j \frac{\sig_j^{m} (u_h^{n,\ell+1}) }{h_j}{\int_{I_j}(u_\sig - P^{m-1}u_\sig)v{\rm d}x} =\; 0~~\forall v \in \mathbb P^k(I_j),\\
		&u_\sig(x,0) =\; u_h^{n,\ell+1}(x),
	\end{aligned} \right.
\end{equation}
where $\hat t$ is a pseudo-time different from $t$, and $\beta_j$ is a suitable estimate of the local maximum wave speed on $I_j$; for scalar conservation laws, we take 
$
	\beta_j = | f'( \overline{u}_j^{n,\ell+1} ) |,
	$ 
	where $\overline{u}_j^{n,\ell+1}$ denotes the cell average of $u_h^{n,\ell+1}(x)$ over $I_j$. 
The operator $P^m$ is the standard $L^2$ projection into ${\mathbb V}^m$ for $m\ge 0$, that is, for any function $w$, $P^m w \in {\mathbb V}^m$ satisfies 
\begin{equation*}
	\int_{I_j} (P^m w - w) v {\rm d}x =0 \qquad \forall v \in \mathbb P^m(I_j),~\forall j. 
\end{equation*}
We define $P^{-1}=P^0$. It is worth noting that $P^0 w (x) = \overline w_j$ for all $x \in I_j$, where $\overline w_j$ denotes the cell average of $w$ over $I_j$. 
The damping coefficient $\sig_j^{m}(u_h)$, as a function of $u_h$, is crucial and should be chosen carefully. 
It should be small in smooth regions and large near discontinuities. To make the resulting scheme scale-invariant, we propose a new damping coefficient
\begin{equation}\label{eq:1Dsigma}
	\sig_j^{m}(u_h) = 
	\begin{cases} 
		0, ~~ & {\rm if}~  u_h \equiv \mathrm{avg}(u_h), \\
			\displaystyle
	 \frac{ (2m+1)h_j^m}{(2k-1)m!} \frac{\big|\jump{\partial_x^m u_h}_{j-\frac{1}{2}}\big|+ \big|\jump{\partial_x^m u_h}_{j+\frac{1}{2}}\big|  } {2 \|  u_h - \mathrm{avg}(u_h)  \|_{L^\infty(\Omega)}  }, ~~ & {\rm otherwise},
	 \end{cases}
\end{equation}
where $\jump{v}_{j+\frac{1}{2}}=v_{j+\frac12}^+-v_{j+\frac12}^-$ is the jump of $v$ at $x=x_{j+\frac12}$, and 
$\mathrm{avg}(v)=\frac1{|\Omega|} \int_{\Omega} v(x) {\rm d} x$ denotes the average of $v$ over $\Omega$. 
Note that the common scalar quantity $\|  u_h - \mathrm{avg}(u_h)  \|_{L^\infty(\Omega)}$ is used for all mesh cells. At the beginning of the OE procedure in each stage, this quantity can be first calculated using global data, similarly to how we compute $\tau$ at every time step. 
After that, the computation of $\sig_j^{m}(u_h)$ will involve information only from $I_j$ and its adjacent cells, which maintains the compactness and local structure of the conventional DG methods. 
As it will be elaborated in \Cref{sec:invariance}, the OEDG method is 
scale-invariant and evolution-invariant, thanks to the carefully designed new damping coefficient \eqref{eq:1Dsigma} and the incorporation of wave speed $\beta_j$ into the damping terms in \eqref{eq:filter}. 


\subsection{Exact solver of OE procedure}

Note that $\sig_j^{m} (u_h^{n,\ell+1})$ in \eqref{eq:filter} only depends on the ``initial'' solution $u_\sig(x,0) = u_h^{n,\ell+1}(x)$. As a result, the damping ODE \eqref{eq:filter} is linear, and its exact solution can be {\em explicitly} formulated. Therefore, the OE procedure is computationally cheap and very easy to implement. 

Let $\{\phi_{j}^{(i)}(x)\}_{i=1}^k$ be a local orthogonal basis of $\mathbb P^k(I_j)$ over $I_j$. For example, we choose the scaled Legendre polynomials 
$$
\phi_{j}^{(0)}(x)=1,~~\phi_{j}^{(1)}(x)=\frac{x-x_j}{h_j},~~\phi_{j}^{(2)}(x)=12\left(\frac{x-x_j}{h_j}\right)^2-1,\quad \dots.
$$
Assume that the solution of \eqref{eq:filter} can be represented as 
\begin{equation}\label{eq:expansion}
	u_\sig (x,\hat t) = \sum_{i=0}^k  u_j^{(i)}(\hat t) \phi_j^{(i)}(x) \quad \mbox{for}~x\in I_j. 
\end{equation}
Note that
\begin{equation}\label{eq:expansion2}
	(u_\sig -P^{m-1} u_\sig) (x,\hat t) = \sum_{i=\max\{m,1\}}^k  u_j^{(i)}(\hat t) \phi_j^{(i)}(x).
\end{equation}
Substitute \eqref{eq:expansion}--\eqref{eq:expansion2} into \eqref{eq:filter} and take $v=\phi_{j}^{(i)}$.
For $i\geq 1$, one obtains 
$$
{\frac{\mathrm{d}}{\mathrm{d} \hat t}} u_j^{(i)} (\hat t) \int_{I_j} \left( \phi_j^{(i)}(x) \right)^2 {\rm d}x + \sum_{m=0}^i \beta_j \frac{ \sig_j^m}{h_j} u_j^{(i)} (\hat t) \int_{I_j} \left( \phi_j^{(i)}(x) \right)^2 {\rm d}x = 0,  \quad i = 1,2,\cdots, k,
$$
which can be simplified as 
\begin{equation}\label{eq:dampingODE}
	{\frac{\mathrm{d}}{\mathrm{d} \hat t}} u_j^{(i)} (\hat t) + \left(  \frac{\beta_j}{h_j} \sum_{m=0}^i  \sig_j^m  \right) u_j^{(i)} (\hat t) = 0, \quad i = 1, 2, \cdots, k. 
\end{equation}
Solving \eqref{eq:dampingODE} gives 
\begin{equation}\label{eq:dampingODEi}
	u_j^{(i)} (\tau) = \exp \left( -\frac{\beta_j \tau}{h_j} \sum_{m=0}^i \sig_j^m \right) u_j^{(i)} (0), \quad i = 1, 2, \cdots k.
\end{equation}
For $i = 0$, since $\int_{I_j} (u_\sig -P^{m-1} u_\sig)\phi_j^{(0)} {\rm d}x \equiv 0$, we have ${\frac{\mathrm{d}}{\mathrm{d} \hat t}} u_j^{(0)}(\hat{t})\int_{I_j}(\phi_j^{(0)}(x))^2 {\rm d}x = 0$ and thus
\begin{equation}\label{eq:dampingODE0}
	u_j^{(0)}(\tau) = u_j^{(0)}(0).
\end{equation} 
Hence, with \eqref{eq:dampingODE0} and \eqref{eq:dampingODEi}, one has 
\begin{equation}\label{usig}
	u_\sig^{n,\ell+1} =  \cF_\tau u_h^{n,\ell+1} = u_\sig (x,\tau) = u_j^{(0)} (0) \phi_j^{(0)}(x) + \sum_{i=1}^k {\rm e}^{-\frac{\beta_j \tau}{h_j} \sum_{m=0}^i \sig_j^m( u_h^{n,\ell+1} )  } u_j^{(i)} (0) \phi_j^{(i)}(x),
\end{equation}
where $
u_j^{(i)} (0) = \int_{I_j}u_h^{n,\ell+1} \phi_j^{(i)}{\rm d}x / \| \phi_j^{(i)} \|_{L^2(I_j)}^2
$ is the modal coefficient of $u_h^{n,\ell+1}(x)$.

\begin{remark}[dimensionless damping]
	As well-known, the argument of an exponential must be dimensionless, since it is raised to all powers in the corresponding series expansion and maintaining consistent dimensions across all terms in the series is imperative. For the exponential term in \eqref{usig}, 
	the scale-invariant property in \Cref{thm:scale-invariant} ensures that $\sig_j^m( u_h^{n,\ell+1} )$ is  inherently dimensionless, while the inclusion of the wave speed $\beta_j$ in the damping terms guarantees that $\frac{\beta_j \tau}{h_j}$ is also dimensionless. 
\end{remark}

\begin{remark}[conservation]
	From \eqref{eq:dampingODE0}, one can see that the OE procedure does not affect the modal coefficient associated with $\phi_j^{(0)}$. In other words, the OE procedure does not alter the cell average of the numerical solution, thereby preserving the local mass conservation. 
\end{remark}

\begin{remark}[stability]
	From \eqref{usig}, one can observe that the damping operator actually acts as a modal filter. This reveals the close relations between the damping technique and the spectral viscosity techniques  \cite{gottlieb2001spectral,hesthaven2007spectral,hesthaven2008filtering}. 
	The OE procedure damps the modal coefficients of the high-order moments of the numerical solution. Hence we have 
	\[
	\nm{u_\sig^{n,\ell+1}}_{L^2(I_j)}^2 = \sum_{i=0}^k|u_j^{(i)}(\tau)|\|\phi_j^{(i)}\|_{L^2(I_j)}^2\leq \sum_{i=0}^k|u_j^{(i)}(0)|\|\phi_j^{(i)}\|_{L^2(I_j)}^2 = \|u_h^{n,\ell+1}\|_{L^2(I_j)}^2. \]
	As a result, the OE procedure will reduce the element-wise $L^2$ norm of the numerical solution, thereby enhancing the stability of the numerical solution. 
	Thanks to the exact solver for the OE step, the OEDG method remains stable under the normal CFL condition, even in the presence of strong shocks associated with highly stiff damping terms. Unlike the OFDG approach \cite{LiuLuShu_OFDG_system}, 
	the OEDG method does not require the (modified) exponential RK methods to avoid stringent time step restrictions. 
\end{remark}

\begin{remark}[simplicity and efficiency] 
	The OE procedure is fully detached from the RK stage update and does not interfere with the DG spatial discretization. 
	In contrast to the exponential RK method in the OFDG approach, the OE procedure in the OEDG method is  non-intrusive: It can be incorporated easily and seamlessly into existing DG codes as an independent module, requiring only a slight modification to the existing structure, rather than an extensive overhaul.  
	Moreover, implementing the OE procedure is straightforward and efficient, as it involves only simple multiplication of modal coefficients by scalars. 
\end{remark}

\begin{remark}
	In the OE procedure \eqref{eq:OEstep}, it seems unnecessary to synchronize the time stepsize with the specific RK stage. We opt to evolve the damping equation over a full time step for ease, which also promotes uniformity in the implementation of the OE procedure. Certainly, one can choose to align the time stepsize with the specific RK stage, which may help reduce the damping strength. 
\end{remark}

\subsection{Scale invariance and evolution invariance}\label{sec:invariance} 
In this subsection, we elaborate the scale-invariant and evolution-invariant properties of the OEDG method, which are attained through the carefully designed new damping operator. 

\begin{theorem}[scale invariance]\label{thm:scale-invariant}
	For any $\lambda \neq 0$ and any $\mu \in \mathbb R$, ones has  
	\begin{equation*}
		\sig_j^{m}(\lambda u_h+\mu)=\sig_j^{m}(u_h), 
	\end{equation*}
	which implies 
	\begin{equation*}
		\cF_\tau(\lambda u_h+\mu)=\lambda\cF_\tau(u_h)+\mu.
	\end{equation*}
\end{theorem}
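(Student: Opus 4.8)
The plan is to establish the pointwise identity $\sig_j^m(\lambda u_h+\mu)=\sig_j^m(u_h)$ first, since the equivariance of $\cF_\tau$ then follows almost mechanically from the explicit exact-solution formula \eqref{usig}. Throughout I would write $v_h:=\lambda u_h+\mu$ and treat $\beta_j$ as a fixed parameter of the damping equation (in the linear advection setting of \Cref{sec:err} it is literally a constant, so no issue arises). The first thing to record is how the global centering interacts with the affine map: since $\mathrm{avg}(\cdot)$ is linear, $\mathrm{avg}(v_h)=\lambda\,\mathrm{avg}(u_h)+\mu$, and therefore $v_h-\mathrm{avg}(v_h)=\lambda\,(u_h-\mathrm{avg}(u_h))$. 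This single identity drives both branches of the argument.

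Next I would verify that the two branches of the piecewise definition \eqref{eq:1Dsigma} are matched. The predicate ``$u_h\equiv\mathrm{avg}(u_h)$'' simply asserts that $u_h$ is a global constant; because $\lambda\neq0$, $v_h$ is a global constant if and only if $u_h$ is, so both coefficients take the value $0$ simultaneously. On the complementary (nonconstant) branch I would scale numerator and denominator separately. For the denominator, the centering identity gives $\|v_h-\mathrm{avg}(v_h)\|_{L^\infty(\Omega)}=|\lambda|\,\|u_h-\mathrm{avg}(u_h)\|_{L^\infty(\Omega)}$. For the numerator I would split on $m$: for $m\geq1$ the additive constant $\mu$ is annihilated by $\partial_x^m$, so $\partial_x^m v_h=\lambda\,\partial_x^m u_h$; for $m=0$ the constant $\mu$ is continuous and contributes no jump, so $\jump{v_h}=\lambda\jump{u_h}$. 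In either case each interface jump scales by $\lambda$, hence $\big|\jump{\partial_x^m v_h}_{j\pm\frac12}\big|=|\lambda|\,\big|\jump{\partial_x^m u_h}_{j\pm\frac12}\big|$. The common factor $|\lambda|$ then cancels between numerator and denominator, leaving the $m,k,h_j$-dependent prefactor $\frac{(2m+1)h_j^m}{(2k-1)m!}$ untouched, which yields $\sig_j^m(v_h)=\sig_j^m(u_h)$.

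For the second assertion I would substitute into \eqref{usig}. Writing $v_j^{(i)}(0)$ for the modal coefficients of $v_h$, orthogonality of $\phi_j^{(i)}$ to constants for $i\geq1$ gives $v_j^{(i)}(0)=\lambda\,u_j^{(i)}(0)$, while the constant mode $\phi_j^{(0)}\equiv1$ absorbs the shift: $v_j^{(0)}(0)=\lambda\,u_j^{(0)}(0)+\mu$. Since the damping factors $\exp\!\big(-\frac{\beta_j\tau}{h_j}\sum_{m=0}^i\sig_j^m\big)$ are unchanged by the first part, factoring $\lambda$ out of the $i\geq1$ sum and collecting the $+\mu$ into the undamped $i=0$ mode gives exactly $\cF_\tau v_h=\lambda\,\cF_\tau u_h+\mu$. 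I expect the only delicate points to be bookkeeping rather than conceptual: correctly treating the $m=0$ term of the numerator (a jump of values, not of a derivative) and confirming that the shift $\mu$ lands solely in the undamped constant mode so that the affine structure is preserved. The hypothesis $\lambda\neq0$ is essential precisely for the case-matching and for the cancellation in the denominator.
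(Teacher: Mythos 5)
Your proof is correct and follows the only natural route --- direct verification from the definition \eqref{eq:1Dsigma} and the exact solution formula \eqref{usig}; the paper in fact states this theorem without a written proof, treating it as immediate. Your explicit handling of the $m=0$ term (a jump of values rather than of a derivative) and of the dependence of $\beta_j$ on $u_h$ --- which must be held fixed, or $f$ taken homogeneous as in \Cref{thm:homogeneity}, for the $\cF_\tau$ identity to hold literally --- is careful and fills in the one point the paper glosses over.
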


Such a scale-invariant property is crucial, as it ensures that the damping coefficient \eqref{eq:1Dsigma} is ``dimensionless'' and that the damping strength/effect remains unchanged regardless of the unit used for $u$. For instance, if $u$ represents mass, then the scale-invariant damping effect  is consistent, whether gram or kilogram is used for $u$.  
Additionally, we observe that scale invariance is vital for effectively guaranteeing the oscillation-free property, enabling the OEDG method to perform consistently well for problems across different scales; see \Cref{fig:ex1_B} and more numerical evidences in \Cref{sec:num}.

Let ${\mathcal S}_t$ be the exact solution operator of the equation \eqref{eq:1DHCL}, i.e., ${\mathcal S}_t(u(x,0))=u(x,t)$. Let ${\mathcal E}_{n}$ be the solution operator of the OEDG method, i.e., ${\mathcal E}_{n} (u_h^0) = u_\sigma^n$. 
When the flux in \eqref{eq:1DHCL} is homogeneous, namely, $f(\lambda u) = \lambda f(u)$ for all $\lambda$, then the exact solution operator ${\mathcal S}_t$ is also homogeneous: ${\mathcal S}_t ( \lambda u(x,0) ) = \lambda {\mathcal S}_t ( u(x,0) )$. Thanks to the scale-invariant property, the OEDG solution operator preserves such homogeneity.

\begin{theorem}[homogeneity]\label{thm:homogeneity}
	If the flux function $f(u)$ and the numerical flux $\hat{f}_{j+\hf}$ are both homogeneous, then the OEDG solution operator is homogeneous: 
	\begin{equation*}
		{\mathcal E}_{n} (\lambda u_h^0) = \lambda  {\mathcal E}_{n} (u_h^0) \qquad \forall \lambda \in \mathbb R. 
	\end{equation*}
\end{theorem}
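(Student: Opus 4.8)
The plan is to prove the claim by a double induction---over the Runge--Kutta stages within a single time step, and then over the time levels $n$---showing that scaling the initial data by $\lambda$ scales every intermediate quantity by the same factor. The two structural facts that make this induction close are (i) the homogeneity of the DG spatial operator $L_f$ under the stated flux assumptions and (ii) the scale invariance of the OE operator $\cF_\tau$ already established in \Cref{thm:scale-invariant}.

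First I would record the key auxiliary fact: \emph{the DG spatial discretization $L_f$ is homogeneous whenever $f$ and $\hat{f}$ are}. Recall that $L_f(u_h) \in {\mathbb V}^k$ is the unique element determined by requiring $\int_{I_j} L_f(u_h) v\, {\rm d}x$ to equal the right-hand side of \eqref{eq:1DDG} for all $v \in {\mathbb V}^k$. Replacing $u_h$ by $\lambda u_h$ and invoking $f(\lambda u_h) = \lambda f(u_h)$ together with $\hat{f}_{j+\hf}(\lambda u_h) = \lambda \hat{f}_{j+\hf}(u_h)$ scales that entire right-hand side by $\lambda$; since $L_f$ is recovered from the right-hand side by the ($\lambda$-linear) cellwise $L^2$ projection, this yields $L_f(\lambda u_h) = \lambda L_f(u_h)$. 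In particular, taking $\lambda = 0$ forces $f(0) = 0$ and hence $L_f(0) = 0$, so the degenerate case $\lambda = 0$ of the theorem is immediate: the zero initial datum produces the zero solution at every stage, and $\cF_\tau(0) = 0$ by \eqref{usig}.

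With this in hand, fix $\lambda \neq 0$ and run the scheme from the two initial data $u_h^0$ and $\lambda u_h^0$; write $\tilde u$ for quantities in the second run. The inductive hypothesis within time step $n$ is $\tilde u_\sig^{n,m} = \lambda u_\sig^{n,m}$ for all $m \le \ell$, which holds at $m = 0$ by the base case $u_\sig^{n,0} = u_\sig^n$ (itself propagated from $\tilde u_\sig^0 = \lambda u_h^0 = \lambda u_\sig^0$). The RK update \eqref{eq:rk} has constant coefficients $c_{\ell m}, d_{\ell m}$, so, using the homogeneity of $L_f$, I would compute $\tilde u_h^{n,\ell+1} = \sum_{0\le m\le\ell}\big(c_{\ell m}\lambda u_\sig^{n,m} + \tau d_{\ell m} L_f(\lambda u_\sig^{n,m})\big) = \lambda u_h^{n,\ell+1}$. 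The OE step \eqref{eq:OEstep} together with \Cref{thm:scale-invariant} (applied with $\mu = 0$) then gives $\tilde u_\sig^{n,\ell+1} = \cF_\tau(\lambda u_h^{n,\ell+1}) = \lambda \cF_\tau(u_h^{n,\ell+1}) = \lambda u_\sig^{n,\ell+1}$, advancing the stage induction. Stepping through to $\ell = s$ yields $\tilde u_\sig^{n+1} = \lambda u_\sig^{n+1}$, which propagates the claim across time levels and hence gives ${\mathcal E}_n(\lambda u_h^0) = \lambda {\mathcal E}_n(u_h^0)$.

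Since both the RK stage update and the OE step are exactly degree-one homogeneous under the stated hypotheses, I do not anticipate a genuine obstacle. The only point requiring care is the auxiliary homogeneity of $L_f$, where one must invoke the homogeneity of \emph{both} the physical flux $f$ and the numerical flux $\hat{f}$ (the latter being precisely the extra assumption in the statement), and the observation that the cellwise $L^2$ projection defining $L_f$ commutes with scalar multiplication.
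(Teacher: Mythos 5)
Your proposal is correct and follows essentially the same route as the paper's proof: establish homogeneity of $L_f$ from the homogeneity of $f$ and $\hat f$, invoke \Cref{thm:scale-invariant} for the OE step, and propagate through the RK stages and time levels. You simply spell out the induction and the degenerate case $\lambda=0$ that the paper leaves implicit.
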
 

\begin{proof}
	If $f(u)$ and $\hat{f}_{j+\hf}$ are homogeneous, then $L_f(u_\sigma^{n,\mykappa})$ is homogeneous, implying that the local solution operator for each RK stage \eqref{eq:rk} is homogeneous. According to \Cref{thm:scale-invariant}, the operator $\cF_\tau(u_h)$ for each OE procedure \eqref{eq:OEstep} is  homogeneous. Therefore, ${\mathcal E}_{n}$ is homogeneous. 
\end{proof}

\begin{remark}\label{rem:LuLiuShuDamping}
	The following damping coefficient was proposed in \cite{lu2021oscillation}: 
\begin{equation}\label{eq:LuLiuShuDamping}
	\widehat{\sig}_j^{m}(u_h) = \frac{ 2(2m+1)h^m}{(2k-1)m!}  \left( \jump{\partial_x^m u_h}_{j-\frac{1}{2}}^2 + {\jump{\partial_x^m u_h}_{j+\frac{1}{2}}^2} \right)^{\frac12} ~~ \mbox{with}~~h:=\max_{j} h_j, 
\end{equation}
	which is not scale-invariant due to $\widehat{\sig}_j^{m}( \lambda u_h) = |\lambda| \widehat{\sig}_j^{m}(u_h)$. This indicates that if we scale $u$ by changing its unit, then the damping effect would be reduced ($|\lambda|<1$) or increased ($|\lambda|>1$). 
	As a result, oscillations may not be fully suppressed for small-scale problems, and much smearing or dissipation is produced for large-scale problems; see \Cref{fig1:b}, \Cref{ex2}, and \Cref{ex:tf}. 
\end{remark}

	\begin{remark}[local scale invariance]\label{rem:LSI}
	When simulating some multi-scale problems that simultaneously involve both small and large magnitude structures, one may consider a locally scale-invariant damping coefficient, for example, 
	\begin{equation}\label{eq:1Dsigma_localSI}
		\widetilde \sig_j^{m}(u_h) = 
		\begin{cases} 
			0, ~~ & {\rm if}~  u_h | _{\Omega_j} \equiv {\rm constant}, \\
			\displaystyle
			C_{m,k} \frac{ (2m+1)h_j^m \tilde h_j }{(2k-1)m!} \frac{\big|\jump{\partial_x^m u_h}_{j-\frac{1}{2}}\big|+ \big|\jump{\partial_x^m u_h}_{j+\frac{1}{2}}\big|  } {2 \left \|  u_h - \frac{1}{|\Omega_j|} \int_{\Omega_j}u_h {\rm d} x \right \|_{L^\infty(\Omega_j)}  }, ~~ & {\rm otherwise},
		\end{cases}
	\end{equation}
	where $\Omega_j$ is a local region containing $I_j$ and can be defined as $\Omega_j=I_{j-1} \cup I_j \cup I_{j+1}$, and $\tilde h_j := \max\{ h_{j-1}, h_j, h_{j+1} \}$ is introduced to ensure accuracy.  
	Yet, our numerical experiments indicate that it is difficult to determine a unified mesh-independent constant $C_{m,k}$. This will be explored in our future work. The importance of local scale invariance was also recognized in the design of finite volume and finite difference schemes  \cite{chen2022physical,don2022novel}. 
\end{remark}

Another distinctive feature in the OEDG method is that our damping terms in \eqref{eq:filter} incorporate the information of the local maximum wave speed  $\beta_j$, which was not considered in \cite{lu2021oscillation} but is critical for ensuring the appropriate damping strength for varying characteristic speeds (see \Cref{fig:ex1_B} and \Cref{ex2}). The insights of including $\beta_j$ arise from the evolution-invariant property, as explained in the following. 
For any given constant $\lambda >0$, let ${\mathcal S}_t^{\lambda}$ be the solution operator of the scalar conservation law 
$u_t + \lambda f(u)_x=0$. In particular, ${\mathcal S}_t$ denotes the solution operator of equation \eqref{eq:1DHCL}. The equation $u_t + \lambda f(u)_x=0$ can be regarded as the formulation of equation \eqref{eq:1DHCL} by changing the unit of $t$. Obviously, we have the following evolution-invariant property 
$$
{\mathcal S}_t^{\lambda} = {\mathcal S}_{\lambda t} \qquad \forall \lambda>0,~~ \forall t>0. 
$$
This property is also preserved by the OEDG method, as shown below. It states that for problems with slow-propagating waves (large time steps) and fast-propagating waves (small time steps), if their solutions are consistent, the OEDG method will yield identical results after a fixed number of steps.

  
\begin{theorem}[evolution invariance]\label{thm:evolution-invariant}
	Let ${\mathcal E}_{n}^{\lambda}$ be the solution operator of the OEDG method solving 
	$u_t + \lambda f(u)_x=0$ for $\lambda > 0$ on a fixed mesh with the same CFL number, namely, 
	${\mathcal E}_{n}^{\lambda}(u_h^{0}) = u_\sigma^{n}$ at time $t_n^\lambda=n \tau_\lambda$, where $\tau_\lambda = \tau/\lambda$ is the time stepsize. Then we have 
	\begin{equation}\label{eq:evolution-invariant}
		{\mathcal E}_{n}^{\lambda} = {\mathcal E}_{n}  \qquad \forall \lambda>0, 
	\end{equation}
where ${\mathcal E}_{n}$ is the solution operator of the OEDG method solving equation \eqref{eq:1DHCL}. 
\end{theorem}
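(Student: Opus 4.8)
The plan is to establish the operator identity \eqref{eq:evolution-invariant} by induction, showing that the scaled scheme (solving $u_t + \lambda f(u)_x = 0$ with step $\tau_\lambda = \tau/\lambda$) and the unscaled scheme (solving \eqref{eq:1DHCL} with step $\tau$) generate \emph{identical} intermediate quantities at every Runge--Kutta substage and every OE step, provided they start from the same data $u_h^0$. The whole argument reduces to two elementary scaling cancellations.

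First I would record how the three building blocks of the scheme respond to replacing $f$ by $\lambda f$. (i) The semidiscrete DG operator in \eqref{eq:1DDG} is linear in the flux, so $L_{\lambda f} = \lambda L_f$. (ii) The local wave-speed estimate obeys $\beta_j^\lambda = |(\lambda f)'(\overline{u}_j)| = \lambda \beta_j$ for $\lambda > 0$. (iii) The damping coefficients $\sigma_j^m(u_h)$ in \eqref{eq:1Dsigma} depend only on their argument $u_h$ and not on the flux or on $\lambda$; hence whenever both schemes feed the OE procedure the same function, they return the same $\sigma_j^m$.

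Next I would analyze a single time step under the inductive hypothesis that the two schemes enter with a common $u_\sigma^n$. For each RK substage \eqref{eq:rk}, the scaled update contributes $\tau_\lambda d_{\ell m} L_{\lambda f} = (\tau/\lambda)\, d_{\ell m}\,(\lambda L_f) = \tau d_{\ell m} L_f$, so the product $\tau_\lambda L_{\lambda f}$ is $\lambda$-invariant and the stage output $u_h^{n,\ell+1}$ coincides for the two schemes; in particular the cell averages $\overline{u}_j^{n,\ell+1}$, and hence the base wave speeds, coincide. For the ensuing OE step \eqref{eq:OEstep}, the exact solver \eqref{usig} shows that $\lambda$ enters only through the exponent, and $\frac{\beta_j^\lambda \tau_\lambda}{h_j} = \frac{(\lambda\beta_j)(\tau/\lambda)}{h_j} = \frac{\beta_j \tau}{h_j}$, while the $\sigma_j^m$ agree by (iii) because the inputs $u_h^{n,\ell+1}$ agree. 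Thus $u_\sigma^{n,\ell+1}$ coincides; iterating over $\ell = 0, \dots, s-1$ gives a common $u_\sigma^{n+1}$, and induction on $n$ (with shared base case $u_\sigma^0 = u_h^0$) completes the proof.

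This is essentially a bookkeeping of two cancellations, so I expect no deep obstacle. The only point requiring care is to confirm that nothing in the scheme sees $\lambda$ except through the two combinations $\tau_\lambda L_{\lambda f}$ and $\beta_j^\lambda \tau_\lambda$, each of which is $\lambda$-invariant. This hinges on the flux-independence of $\sigma_j^m$ --- exactly the design feature stressed after \eqref{eq:1Dsigma} --- and on the CFL condition, which is what forces $\tau_\lambda = \tau/\lambda$ and thereby makes the two cancellations consistent.
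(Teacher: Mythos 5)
Your proposal is correct and follows essentially the same route as the paper: the two cancellations $\tau_\lambda L_{\lambda f} = \tau L_f$ for the RK stages and $\beta_j^\lambda \tau_\lambda = \beta_j \tau$ (i.e.\ $\cF_{\tau_\lambda}^{\lambda} = \cF_{\lambda\tau_\lambda} = \cF_\tau$) for the OE step are exactly the two observations in the paper's proof. You merely make the induction over stages and time steps, and the flux-independence of $\sigma_j^m$, more explicit than the paper does.
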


\begin{proof}
	Note that the evolution-invariant property holds for the conventional DG scheme at each RK stage, because $L_{\lambda f}(u_h) = \lambda L_f(u_h)$ and  $\tau_\lambda L_{\lambda f}(u_h)  = \tau/\lambda\cdot \lambda L_{f}(u_h) = \tau L_f(u_h)$.  Thus the RK stage \eqref{eq:rk} will be identical regardless of the value of $\lambda$. 
		The OE step also preserves the evolution invariance. In fact, let $\cF_{\tau_\lambda}^\lambda$ be the OE operator for $u_t + \lambda f(u)_x=0$. Since $\beta_j = \lambda | f'( \overline{u}_j^{n,\ell+1} ) |$, we observe that 
		\begin{equation}\label{eq:Flambda}
			\cF_{\tau_\lambda}^\lambda = \cF_{\lambda \tau_\lambda} = \cF_{\tau} \qquad \forall \lambda > 0. 
		\end{equation}
		Consequently, both the RK stage and the OE step produce identical outputs regardless of the value of $\lambda$. Hence the OEDG method preserves the evolution invariance \eqref{eq:evolution-invariant}.
\end{proof}

\begin{remark}
	We would like to emphasize that the proposed damping terms offer a viable replacement for the original damping terms in the OFDG method \cite{lu2021oscillation,LiuLuShu_OFDG_system}. With this modification, the new OFDG method will achieve both scale invariance and evolution invariance. Consequently, it would demonstrate consistent performance across different scales and wave speeds. 
\end{remark}

\subsection{Extension to 1D hyperbolic systems}\label{sec:1Dsystem} 

The OE procedure can be naturally extended to 1D hyperbolic systems of conservation laws ${\bf u}_t + {\bf f}({\bf u})_x={\bf 0}$.  
More specifically, we propose 
\begin{equation}\label{eq:filterSYS}
	\left\{  
	\begin{aligned}
		&{\frac{\mathrm{d}}{\mathrm{d} \hat t}\int_{I_j}{\bf u}_\sig\cdot{\bf v}{\rm d}x} + \sum_{m = 0}^k 
		\beta_j \frac{\sig_j^{m} ({\bf u}_h^{n,\ell+1}) }{h_j}{\int_{I_j}({{\bf u}_\sig - P^{m-1}{\bf u}_\sig})\cdot{\bf v}{\rm d}x} =\; 0~~\forall {\bf v} \in \mathbb [\mathbb P^k(I_j)]^N,\\
		&{\bf u}_\sig(x,0) =\; {\bf u}_h^{n,\ell+1}(x),
	\end{aligned} \right.
\end{equation}
with $\beta_j$ being the spectral radius of the Jacobian matrix $\frac{\partial {\bf f}}{\partial {\bf u}} (\overline {\bf u}_j^{n,\ell+1})$. The damping coefficient $\sig_j^{m} ({\bf u}_h)$, as a function of ${\bf u}_h$, is defined as 
$$
\sig_j^{m} ({\bf u}_h) := \max_{1\le i\le N} \sig_j^{m} (u_h^{(i)}),
$$
where $u_h^{(i)}$ is the $i$th component of ${\bf u}_h$, and $\sig_j^{m} (u_h^{(i)})$ is computed by \eqref{eq:1Dsigma}. It can be verified that the scale-invariant and evolution-invariant properties are also satisfied by the OEDG method for  hyperbolic systems. 

\begin{remark}
	In contrast to the OFDG method \cite{LiuLuShu_OFDG_system}, characteristic decomposition appears unnecessary in the OEDG approach for hyperbolic systems. Although employing characteristic variables can aid in shock identification, determining suitable eigenvectors for the damping terms poses challenges, especially when the damping coefficients are not scale-invariant. It is crucial to note that eigenvectors are not unique; multiplying an eigenvector by a nonzero scalar yields another valid eigenvector. Consequently, distinct (scaled) eigenvectors correspond to different characteristic variables, resulting in significantly different damping strength. Hence the numerical performance may heavily hinge on the choice of eigenvectors: Improper choices could lead to excessive smearing or persistent spurious oscillations. Moreover, opting for characteristic decomposition may destroy the scale invariance, introduce complexity in implementation, and increase computational costs.
\end{remark}

\section{OEDG method for multidimensional conservation laws}\label{sec:md}

In this section, we present the OEDG method for multidimensional system of conservation laws:  
\begin{equation}\label{eq:2DHCL}
	{\bf u}_t + \nabla \cdot {\bf f}({\bf u}) = {\bf 0}, 
\end{equation}
where ${\bf u}=(u_1,\dots,u_N)$. 
Let ${\mathcal T}_h$ be a partition of the computational domain $\Omega$.  Our OEDG method seeks the approximate solution in the following finite element space of discontinuous functions: 
\begin{equation}\label{eq:2DDGspace}
	{\mathbb V}^k := \left \{ {\bf v} \in L^2(\Omega):~ {\bf v} |_{K} \in [\mathbb P^k(K)]^N~~\forall K \in {\mathcal T}_h \right \},
\end{equation}
where $\mathbb P^k(K)$ is the space of polynomials of total degree less than or equal to $k$ on the element $K$.

The conventional semidiscrete DG method reads  
\begin{equation}\label{eq:sDG}
	\int_K ({\bf u}_h)_t \cdot{\bf v} {\rm d} {\bm x} = \int_{K} {\bf f}({\bf u}_h) : \nabla {\bf v} {\rm d} {\bm x} - \sum_{e \in \partial K} \int_{e \in \partial K} \widehat {\bf f} ({\bf u}_h) : ({\bf n}_e \otimes {\bf v}) {\rm d} S \quad \forall {\bf v} \in {\mathbb V}^k,
\end{equation}
where ``$:$'' denotes the Frobenius inner product of two matrices, ``$\otimes$'' represents the Kronecker product of two vectors, 
$\widehat {\bf f} ({\bf u}_h)$ denotes a suitable numerical flux on the element interface $e \in \partial K$, and ${\bf n}_e$ is the outward unit normal vector at $e$ with respect to $K$. 
The semidiscrete scheme \eqref{eq:sDG} can be rewritten in an ODE form ${\frac{\mathrm{d}}{\mathrm{d} t}} {\bf u}_h = L_f({\bf u}_h)$, which can be further discretized in time by using some high-order accurate RK or multi-step methods. 

Similar to the 1D case, the OEDG method is based on applying an OE procedure after each stage in the RK methods or each step in the multi-step methods. 
For example, the OEDG coupled with an $r$th-order $s$-stage RK method reads   
\begin{align}
	{\bf u}_\sig^{n,0} &= {\bf u}_\sig^n, 
	\\
	{\bf u}_h^{n,\ell+1} &= \sum_{0\leq \mykappa \leq \ell} \left(c_{\ell \mykappa} {\bf u}_\sigma^{n,\mykappa} + \tau d_{\ell \mykappa} L_f( {\bf u}_\sigma^{n,\mykappa}) \right),\label{eq:rk2D}\\
	{\bf u}_\sig^{n,\ell+1} &= \cF_\tau {\bf u}_h^{n,\ell+1},\qquad \ell =0,1,\dots,s-1. \label{eq:OEstep2D}
	\\
	{\bf u}_\sig^{n+1} &= {\bf u}_\sig^{n,s}.
\end{align}
The OE operator $\cF_\tau$ is defined as 
 $(\cF_\tau {\bf u}_h^{n,\ell+1})(x) = {\bf u}_\sig(x,\tau)$ with ${\bf u}_\sig(x,\hat t) \in {\mathbb V}^k$ being the solution to the following initial value problem: 
\begin{equation}\label{eq:filter2D}
	\left\{  
	\begin{aligned}
		&{\frac{\mathrm{d}}{\mathrm{d} \hat t}  \int_K {\bf u}_\sig\cdot {\bf v} {\rm d}{\bm{x}}} + \sum_{m = 0}^k 
		\delta_K^{m} ({\bf u}_h^{n,\ell+1})  {\int_K( {{\bf u}_\sig - P^{m-1}{\bf u}_\sig})\cdot{\bf v}{\rm d}{\bm x}} =\; 0~~\forall {\bf v} \in [\mathbb P^k(K)]^N,\\
		&{\bf u}_\sig(x,0) =\; {\bf u}_h^{n,\ell+1}(x),
	\end{aligned} \right.
\end{equation}
where $\hat t$ is a pseudo-time different from $t$, $P^m$ is the standard $L^2$ projection operator into ${\mathbb V}^m$ for $m\ge 0$, and we define $P^{-1}=P^0$. 
The coefficient $\delta_K^{m} ({\bf u}_h)$ is defined as 
\begin{equation}\label{eq:delta}
	\delta_K^{m} ({\bf u}_h) = \sum_{e \in \partial K} \beta_e \frac{ \sigma_{e,K}^m({\bf u}_h) }{h_{e,K}}, 
\end{equation}
where  $\beta_e$ a suitable estimate of the local maximum wave speed in the direction ${\bf n}_e$ on the element interface $e$. In our computations, we take $\beta_e$ as the spectral radius of $\sum_{i=1}^d n_e^{(i)} \frac{\partial {\bf f}_i}{\partial {\bf u}} \big|_{{\bf u}=\overline{\bf u}_K}$, where $n_e^{(i)}$ is the $i$th  component of ${\bf n}_e$. 
We define  
\begin{equation*}\label{eq:2D_h_sig}
	h_{e,K} = \sup_{\bm x \in K} {\rm dist}({\bm x},e), \qquad \sigma_{e,K}^m({\bf u}_h) = \max_{1\le i\le N} \sigma_{e,K}^m(u_h^{(i)}) 
\end{equation*}
with $u_h^{(i)}$ being the $i$the component of ${\bf u}_h$, and 
\begin{equation}\label{eq:2Dsig}
	\sigma_{e,K}^m(u_h^{(i)}) = 
		\begin{cases} 
		0, ~~ & {\rm if}~  u_h^{(i)} \equiv \mathrm{avg}(u_h^{(i)}), \\
		\displaystyle
		\frac{ (2m+1)h_{e,K}^m}{2(2k-1)m!}  \sum_{|{\bm \alpha }|=m} \frac{ \frac{1}{|e|} \int_{e}  \big|\jump{\partial^{\bm  \alpha} u_h^{(i)}}_e \big| {\rm d}S  } { \|  u_h^{(i)} - \mathrm{avg}(u_h^{(i)})  \|_{L^\infty(\Omega)}  }, ~~ & {\rm otherwise}. 
	\end{cases}
\end{equation}
In \eqref{eq:2Dsig}, $|e|$ denotes the length/area of the edge/surface $e \in \partial K$, the vector ${\bm  \alpha}=( \alpha_1,\dots, \alpha_d)$ is the multi-index with $|{\bm  \alpha}|=\sum_{i=1}^d  \alpha_i$, and $\partial^{\bm  \alpha} u_h^{(i)}$ is defined as 
$$
\partial^{\bm  \alpha} u_h^{(i)}({\bm x}) = \frac{ \partial^{|{\bm  \alpha}|} }{\partial x_1^{ \alpha_1} \cdots \partial x_d^{ \alpha_d}} u_h^{(i)}({\bm x})
$$
with ${\bm x}=(x_1,\dots,x_d)$. 
In \eqref{eq:2Dsig}, $|\jump{\partial^{\bm  \alpha} u_h^{(i)}}_e|$ denotes the absolute value of the jump of $\partial^{\bm  \alpha} u_h^{(i)}$ across the element interface $e$, and its integration on $e$ should be approximated by some quadrature rules. In our computations for 2D problems, the trapezoidal rule is used. 
Similar to the 1D case, the multidimensional OE procedure can be easily implemented as 
\begin{equation}
	\cF_\tau {\bf u}_h^{n,\ell+1}  =  {\bf u}_K^{({\bm 0})} (0) \phi_K^{({\bm  0})}({\bm x}) + \sum_{j=1}^k {\rm e}^{-\tau \sum_{m=0}^j \delta_K^m( {\bf u}_h^{n,\ell+1} )  } \sum_{ |{\bm  \alpha}|=j } {\bf u}_K^{({\bm  \alpha})} (0) \phi_K^{({\bm  \alpha})}({\bm x}), 
\end{equation}
where 
${
	{\bf u}_K^{({\bm  \alpha})} (0) = \int_K {\bf u}_h^{n,\ell+1} \phi_K^{({\bm  \alpha})} {\rm d}\bm{x}} / \| \phi_K^{({\bm  \alpha})} \|_{L^2(K)}^2
$ is the expansion coefficient of ${\bf u}_h^{n,\ell+1}({\bm x})$, and $\{\phi_{K}^{(\bm  \alpha)}\}_{|\bm  \alpha|\le k}$ is a local orthogonal basis of $\mathbb P^k(K)$.

For example, consider a rectangular element $K=[x_{i-\hf},x_{i+\hf}]\times [y_{j-\hf},y_{j+\hf}]$. Define $h_{x,i}=x_{i+\hf}-x_{i-\hf}$ and  $h_{y,j}=y_{j+\hf}-y_{j-\hf}$. Let $\beta_{ij}^x$ and $\beta_{ij}^y$ be the estimates of the local maximum wave speeds in the $x$- and $y$-directions, respectively. 
Then the coefficient $\delta_K^{m} ({\bf u}_h)$ becomes 
\begin{equation}\label{eq:delta2Dcart}
	\delta_K^{m} ({\bf u}_h) = \max_{1\le q \le N}  \left(  \frac{ \beta_{ij}^x (\sigma_{i+\frac12,j}^m(u_h^{(q)}) + \sigma_{i-\frac12,j}^m(u_h^{(q)})) }{ h_{x,i}  } +  \frac{ \beta_{ij}^y (\sigma_{i,j+\frac12}^m(u_h^{(q)}) + \sigma_{i,j-\frac12}^m(u_h^{(q)})) }{ h_{y,j}  }   \right)
\end{equation}
with 
\begin{equation*}
	\sigma_{i+\frac12,j}^m(u_h^{(q)}) = 
	\begin{cases} 
		0, ~ & {\rm if}~  u_h^{(q)} \equiv \mathrm{avg}(u_h^{(q)}), \\
		\displaystyle
		\frac{ (2m+1)h_{x,i}^m}{2(2k-1)m!}  \sum_{|{\bm \alpha}|=m} \frac{ \frac{1}{|h_{y,ij}|} \int_{y_{j-\hf}}^{y_{j+\hf}}  \big|\jump{\partial^{\bm \alpha} u_h^{(q)}}_{i+\frac12,j} \big| {\rm d}y  } { \|  u_h^{(q)} - \mathrm{avg}(u_h^{(q)})  \|_{L^\infty(\Omega)}  }, ~ & {\rm otherwise},
	\end{cases}
\end{equation*}
\begin{equation*}
	\sigma_{i,j+\frac12}^m(u_h^{(q)}) = 
	\begin{cases} 
		0, ~ & {\rm if}~  u_h^{(q)} \equiv \mathrm{avg}(u_h^{(q)}), \\
		\displaystyle
		\frac{ (2m+1)h_{y,j}^m}{2(2k-1)m!}  \sum_{|{\bm \alpha}|=m} \frac{ \frac{1}{|h_{x,ij}|} \int_{x_{i-\hf}}^{x_{i+\hf}}  \big|\jump{\partial^{\bm \alpha} u_h^{(q)}}_{i,j+\frac12} \big| {\rm d}x  } { \|  u_h^{(q)} - \mathrm{avg}(u_h^{(q)})  \|_{L^\infty(\Omega)}  }, ~ & {\rm otherwise}.
	\end{cases}
\end{equation*}

\begin{remark}\label{rem:Consistency}
	It is worth noting that the aforementioned multidimensional OE procedure maintains consistency with its 1D counterpart. Consequently, the multidimensional OEDG method, when applied on rectangular meshes for quasi-1D problems, exactly reduces to the 1D OEDG method in \Cref{sec:1d-oedg}. Moreover, as the 1D case, the multidimensional OEDG method also satisfies the scale-invariant and evolution-invariant properties. 
\end{remark}


\section{Optimal error estimates for linear advection equations}\label{sec:err}


In this section, we derive error estimates of the OEDG method for the linear advection equation with constant coefficients in both 1D and 2D. To facilitate the analysis, we introduce the following nonhomogeneous equation
\begin{equation}\label{eq:nonhom}
	u_t + {\bm \beta}\cdot \nabla u = \ff({\bm x},t), \qquad \text{with periodic b.c.},
\end{equation}
while the error estimate will be carried out for the homogeneous case with $\ff \equiv 0$. Here ${\bm \beta}$ is a constant (vector). We assume that $\beta > 0$ in 1D and ${\bm \beta}=(\beta^x,\beta^y)$ with $\beta^x>0$ and $\beta^y>0$ in 2D.  Other cases can be proven along similar lines. 

We will denote by $C$, possibly with subscripts, a general constant which may depend on the order of the RK method $r$, the number of RK stages $s$, the RK coefficients $c_{\ell m}$ and $d_{\ell m}$, the DG polynomial degree $k$, the mesh regularity constant, the Sobolev norms of the exact solution, and the final time $T$, etc., but is independent of the time step size $\tau$ and the mesh size $h$. Here $h$ is the maximum diameter among all mesh cells. Without a subscript, $C$ along can take different values at different places.

\subsection{Main results} 
\begin{theorem}\label{thm:err}
	Consider the OEDG method with upwind flux on 1D meshes and 2D Cartesian meshes with explicit RK time stepping. Assume that the mesh partition is quasi-uniform, $\|{u_h^{n,\ell} - \mathrm{avg}({u_h^{n,\ell}})}\|_{L^\infty(\Omega)}\geq C>0$ is uniformly positive for all $n$ and $\ell$, and $k>\frac{d}{2}$ and $r > \frac{d}{2}+1$ where $d$ is the spatial dimension. Suppose that without the OE procedure, the fully discrete RKDG scheme is stable in the sense of \cref{assp:stab}. If $u(x,t)$ is sufficiently smooth, and $\nm{u_\sig^0-u(\cdot,0)}\leq Ch^{k+1}$, then the OEDG method for  \eqref{eq:nonhom} with $\ff\equiv 0$ (outlined in \eqref{eq:oedg1D2D} and \eqref{eq:oedg-uni3}) admits the optimal error estimate
	\begin{equation}
		\max_{0\leq n\leq\nn}\nm{u_\sig^n - u(\cdot,n\tau)} \leq C\left(h^{k+1} + \tau^r\right).
	\end{equation}
	Here $\nn$ is the final time step such that $T = \nn \tau$. 
\end{theorem}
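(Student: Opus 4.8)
The plan is to recast the nonlinear OEDG scheme as a small perturbation of the linear RKDG scheme and then to run a bootstrap argument on the error. I would introduce the OE increment $\omega^{n,\ell+1} := (\cF_\tau - \mathrm{Id})\,u_h^{n,\ell+1}$, so that each update \eqref{eq:rk}--\eqref{eq:OEstep} becomes one linear RKDG stage followed by the addition of the solution-dependent function $\omega^{n,\ell+1}$. Regarding these increments as stage source terms turns the fully discrete OEDG evolution into a \emph{linear} RKDG method driven by sources; this is exactly the transformation that makes the fully discrete energy-estimate framework of \cite{xu2020error,xu2020superconvergence,ai20222} applicable. I would present the argument in 1D, the 2D Cartesian case being entirely analogous with \eqref{eq:filter2D}--\eqref{eq:2Dsig} in place of \eqref{eq:1Dsigma}. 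The whole estimate is then governed by an a priori assumption, e.g. $\nm{u_\sig^n - u(\cdot,n\tau)} \le h^{k+\theta}$ for a suitable $\theta \in (1/2,1)$, to be recovered a posteriori.

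First I would quantify the increment $\omega^{n,\ell+1}$ under the a priori assumption. Decomposing $u_h^{n,\ell+1} - u = \xi + \eta$ with $\eta$ the Gauss--Radau projection error (chosen to match the upwind flux) and $\xi \in \mathbb{V}^k$, and using that the smooth exact solution has vanishing interface jumps, the jumps $\jump{\partial_x^m u_h^{n,\ell+1}}$ appearing in \eqref{eq:1Dsigma} are controlled solely through $\xi$ and $\eta$. Inverse and trace inequalities on the quasi-uniform mesh, together with the hypothesis $\nm{u_h^{n,\ell} - \mathrm{avg}(u_h^{n,\ell})}_{L^\infty(\Omega)} \ge C > 0$ bounding the denominator of \eqref{eq:1Dsigma} away from zero, then force the damping coefficients $\sig_j^m$ to be small powers of $h$, so that each exponential factor in \eqref{usig} differs from $1$ by a small power of $h$. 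The aim is to show that $\nm{\omega^{n,\ell+1}}$ is of sufficiently high order that its accumulation over the $\nn = T/\tau$ steps contributes at most $O(h^{k+1})$; the factor $\frac{\beta_j \tau}{h_j}$ in the exponent is what supplies the extra power of $\tau$ per step needed for this.

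Next I would carry out the error decomposition $u_\sig^n - u = \xi^n + \eta^n$ and derive the one-step recursion for $\xi^n$ from the DG bilinear form and the RK coefficients, following \cite{xu2020error,ai20222}, now carrying the additional source $\sum_\ell \omega^{n,\ell+1}$. The energy estimate is closed using the stability \cref{assp:stab} of the underlying RKDG scheme and the intrinsic upwind dissipation; the hypotheses $k > \frac{d}{2}$ and $r > \frac{d}{2}+1$ enter precisely when inverse inequalities are invoked to absorb the negative powers of $h$ generated by the multi-stage temporal terms, ensuring the recursion closes at the optimal temporal rate $\tau^r$. A discrete Gr\"onwall inequality then gives $\nm{\xi^n} \le C(h^{k+1} + \tau^r)$ plus the accumulated OE contribution bounded in the previous step.

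The hard part will be the coupling between the OE-increment estimate and the error recursion. Since the bound on $\omega^{n,\ell+1}$ is available only under the a priori assumption, which is weaker than the conclusion, the jump estimates must be sharp enough that the accumulated OE contribution does not spoil the optimal rate; this is the essential difficulty absent from the semidiscrete OFDG analysis \cite{lu2021oscillation}, and it rests on the lower bound for the denominator in \eqref{eq:1Dsigma} and on the $\tau$-factor in the damping exponent. Once this is established, the bootstrap closes: for $h$ small the derived bound $C(h^{k+1}+\tau^r)$ is strictly stronger than the a priori assumption---here $k > \frac{d}{2}$ guarantees that the companion estimate $\nm{\xi^n}_{L^\infty(\Omega)} \le Ch^{-d/2}\nm{\xi^n}$ tends to zero---so the assumption is validated a posteriori. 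Combining with the projection estimate for $\eta^n$ yields the claimed optimal error bound $\max_{0\le n\le \nn}\nm{u_\sig^n - u(\cdot,n\tau)} \le C(h^{k+1}+\tau^r)$.
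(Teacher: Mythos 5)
Your proposal follows essentially the same route as the paper's proof: recasting the OE step as a stage source term for a linear RKDG scheme, bounding that source under an a priori assumption via the jump/denominator structure of the damping coefficients (with the $\tau$ in the damping exponent supplying the needed per-step factor), invoking the RKDG stability of \cref{assp:stab} plus a discrete Gr\"onwall inequality, and closing the bootstrap with the inverse estimate under $k>\frac{d}{2}$ and $r>\frac{d}{2}+1$. The only slight misattribution is that these two hypotheses are needed solely to validate the a priori assumption (as you correctly state at the end), not to close the energy recursion at the rate $\tau^r$.
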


The proof of \Cref{thm:err} will be given in \Cref{sec:proofmain}. 

\begin{remark}
	To avoid additional technicalities and heavy notations, we assume that $u$ is sufficiently smooth and disregard the dependence of $C$ on the Sobolev norms of $u$. If it becomes necessary to optimally characterize such dependence, a set of more sophisticated reference functions must be introduced in the analysis. We refer to \cref{rmk:reffunc} for further details.
\end{remark}

\begin{remark}
	By similar arguments, the error estimate can also be extended to upwind-biased flux on 1D meshes using the generalized Gauss--Radau projection \cite{meng2016optimal,cheng2017application} and on special simplex meshes in two and three dimensions with the projection in \cite[Lemma 3.4]{sun2023generalized}. 
\end{remark}

\begin{remark}
	The requirement $k> \frac{d}{2}$ and $r>\frac{d}{2}+1$ is a technical assumption specifically used in \cref{lem:ind1} for our proof to go through. Numerically, we still observe optimal convergence in multidimensions with low-order polynomials and RK schemes. 
	This requirement will be used to validate the a priori (or inductive) assumption \eqref{eq:apriori}. In the error estimates of nonlinear DG schemes for time-dependent problems, it is a standard technique to first introduce an a priori assumption as in \eqref{eq:apriori} and then validate it in the end. This technique is known to cause additional restrictions on $k$ and $r$ in multidimensions, unless introducing more sophisticated arguments. We refer to \cite[Lemma 4.2]{zhang2006error} and \cite[Lemma 3.2]{huang2017error} for discussions on similar issues. 
\end{remark}

\subsection{Notations and preliminaries} 
In the case that $\cF_\tau \equiv \mathcal{I}$ is the identity operator, the OEDG scheme retrieves the RKDG scheme. Hence, it is not surprising that the error estimate of the OEDG scheme will be built upon that of the original RKDG scheme \cite{xu2020error, xu2020superconvergence}. This process requires the following ingredients: a special spatial projection with certain approximation and superconvergence properties, the stability of the RKDG scheme, and reference functions to track the error at each RK stage. To avoid repetitive arguments, we will introduce unified notations for both 1D and 2D cases and prove both error estimates together.

\subsubsection{Projections and unified notations}\;

\emph{1D projection.} For the 1D problem, we adopt the notations in \Cref{sec:1d-oedg}, and denote by $\ip{w}{v} = \sum_j \ipI{w}{v} = \sum_j \int_{I_j} w v {\rm d}x$. We introduce the bilinear form for the DG operator
\begin{equation}\label{eq:H-1D}
	\dg{w}{v} = \beta\ip{w}{v_x} + \beta\sum_j w_{j+\hf}^-\jump{v}_{j+\hf}.
\end{equation}
The following Gauss--Radau projection $P^\star$ will be used in the error estimates. Given $w$, we define $P^\star$ such that for $\eta = w - P^\star w$, we have 
\begin{equation}\label{eq:gr}
	\ipI{\eta}{v} = 0\quad \forall v \in \mathbb{P}^{k-1}(I_j) \quand P^\star\eta_{j+\hf}^- = 0. 
\end{equation}
The projection has the following approximation and superconvergence properties 
\begin{alignat}{2}
	\|\eta\|_{H^{m}(I_j)}\leq & Ch^{k+1-m}{|w|_{H^{k+1}(I_j)}} \qquad &\forall 0\leq m\leq k+1,\\
	\dg{\eta}{v} = & 0 \qquad &\forall v\in \mathbb{V}^k.
\end{alignat}

\emph{2D projection.}
For the 2D problem on Cartesian meshes, we adopt the notations in \Cref{sec:md}. We denote by $\ip{w}{v} = \sum_{i,j}\ip{w}{v}_{K_{i,j}} = \sum_{i,j} \iint_{K_{i,j}} w v {\rm d}x{\rm d}y$ and introduce the bilinear form
\begin{equation}\label{eq:H-2D}
	\begin{aligned}
		\dg{w}{v} =& \ip{w}{\beta^xv_x+\beta^yv_y}- \sum_{i,j}\int_{x_{i-\hf}}^{x_{i+\hf}}\beta^yw(x,y_{j+\hf}^-)\left(v(x,y_{j+\hf}^-)-v(x,y_{j+\hf}^+)\right){\rm d}x\\
		&- \sum_{i,j}\int_{y_{j-\hf}}^{y_{j+\hf}}\beta^xw(x_{i+\hf}^-,y)\left(v(x_{i+\hf}^-,y)-v(x_{i+\hf}^+,y)\right){\rm d}y.
	\end{aligned}
\end{equation}
The following special projection from \cite{liu2020optimal} will be used for optimal error estimates. Given $w$, we define $P^\star$ such that for $\eta = w - P^\star w$, we have 
\begin{align}\label{eq:liushuzhang}
	&\ip{\eta}{1}_{K_{i,j}} = 0,\\
	&\ip{\eta}{\beta^xv_x+\beta^yv_y}_{K_{i,j}}- \int_{x_{i-\hf}}^{x_{i+\hf}}\beta^y\eta(x,y_{j+\hf}^-)\left(v(x,y_{j+\hf}^-)-v(x,y_{j-\hf}^+)\right) {\rm d}x\\
	&- \int_{y_{j-\hf}}^{y_{j+\hf}}\beta^x\eta(x_{i+\hf}^-,y)\left(v(x_{i+\hf}^-,y)-v(x_{i-\hf}^+,y)\right) {\rm d}y = 0\quad \forall v\in \mathbb{P}^k(K_{i,j}).\nonumber
\end{align} 
The projection has the following approximation and superconvergence properties
\begin{alignat}{2}
	\|\eta\|_{H^{m}(K_{i,j})}\leq& Ch^{k+1-m}{|w|_{H^{k+1}(K_{i,j})}} \qquad &\forall 0\leq m\leq k+1,\\
	|\dg{\eta}{v}| \leq& Ch^{k+1}\nm{v} \qquad &\forall v\in \mathbb{V}^k.
\end{alignat}
The second inequality is the Cauchy--Schwarz version of \cite[(2.30)]{liu2020optimal}. It can be obtained by first applying the Cauchy--Schwarz inequality to  \cite[(2.27)]{liu2020optimal}, then the inverse estimates  \cite[(2.20)]{liu2020optimal}, next the approximation estimates  \cite[(2.19)]{liu2020optimal}, and finally the Bramble--Hilbert lemma. 
%

\emph{Unified notations for both 1D and 2D cases.} The OEDG scheme for \eqref{eq:nonhom} in 1D and in 2D for Cartesian meshes can be written in the following unified form 
\begin{subequations}\label{eq:oedg1D2D}
	\begin{align}
		\ip{u_h^{n,\ell+1}}{v} &= \sum_{0\leq \mykappa \leq \ell} \left(c_{\ell \mykappa}\ip{u_\sig^{n,\mykappa}}{v} + \tau d_{\ell \mykappa}\left(\dg{u_\sig^{n,\mykappa}}{v} + \ip{\ff^{n,\mykappa}}{v}\right)\right),\label{eq:oedg-uni1}\\
		u_\sig^{n,\ell+1} &= \cF_\tau u_h^{n,\ell+1},\qquad 0\leq \ell \leq s-1, \label{eq:oedg-uni2}
	\end{align}
\end{subequations}
and the OE step corresponding to the solution of 
\begin{equation}\label{eq:oedg-uni3}
	\left\{
	\begin{aligned}		
		&{\frac{\mathrm{d}}{\mathrm{d} \hat t}} \ipK{u_\sig}{v} + \sum_{m = 0}^k 
		\delta_K^m\ipK{u_\sig - P^{m-1}u_\sig}{v} =\; 0~~\forall v \in \mathbb P^k(K),\\
		&		u_\sig(x,0) = u_h^{n,\ell+1}(x).
	\end{aligned}\right.	
\end{equation}
Here $K = I_j$ and $\delta_K^m = \delta_K^m(u_h^{n,\ell+1}) = 	\beta_j {\sig_j^{m} (u_h^{n,\ell+1}) }/h_j$ for the 1D case; $K = K_{i,j}$ and $\delta_K^m$ is defined in \eqref{eq:delta}, or more specifically in  \eqref{eq:delta2Dcart}, for the 2D case.

The projection $P^\star$ for the error estimates, is defined as the Gauss--Radau projection in \eqref{eq:gr} for the 1D case, and the Liu--Shu--Zhang projection in \eqref{eq:liushuzhang} for the 2D case. For both cases, with $\eta = w - P^\star w$, we have 
\begin{subequations}\label{eq:projprop}
	\begin{alignat}{2}
		\|\eta\|_{H^{m}(K)}\leq& Ch^{k+1-m}{|w|_{H^{k+1}(K)}} \qquad &\forall 0\leq m\leq k+1,\label{eq:projprop-1}\\
		|\dg{\eta}{v}| \leq& Ch^{k+1}\nm{v} \qquad &\forall v\in \mathbb{V}^k.\label{eq:projprop-2}
	\end{alignat}
\end{subequations}
Note that, in particular, \eqref{eq:projprop-1} implies $\nm{\eta}\leq Ch^{k+1}$. 

The following estimates, which hold for all $v\in \mathbb{P}^k(K)$, will be used in our analysis.
\begin{subequations} 
	\begin{align}
		\nm{v}_{H^{1}(K)}\leq& Ch^{-1}\nm{v}_{L^2(K)},\label{eq:normequiv-1}\\
		\nm{v}_{L^2(\partial K)}\leq& Ch^{-\hf}\nm{v}_{L^2(K)},\label{eq:normequiv-2}\\
		\nm{v}_{L^\infty(K)}\leq& Ch^{-\frac{d}{2}}\nm{v}_{L^2(K)}.\label{eq:normequiv}
	\end{align}
\end{subequations}
Note that applying the first two inequalities to \eqref{eq:H-1D} and \eqref{eq:H-2D} gives
\begin{equation}\label{eq:H}
	|\dg{w}{v}|\leq Ch^{-1} \nm{w}\nm{v}\qquad \forall w, v\in \mathbb{V}^k.
\end{equation}
\subsubsection{Stability of the RKDG method}\label{sec:stab}
The error estimate will be built upon the stability of the original RKDG method for the nonhomogeneous equation \eqref{eq:nonhom} without the OE step.  For ease of presentation, we first state the stability as a general assumption in \cref{assp:stab}, and then state in \cref{prop:stab} on stability of the $r$th-order $r$-stage RKDG methods for linear equations. 
\begin{assumption}[Stability of the RKDG method]\label{assp:stab}
	Under the time step constraint $\tau \leq C_{\mathrm{CFL}} h^\kappa$ with $\kappa\geq 1$, the RKDG method without the OE procedure, namely \eqref{eq:oedg1D2D} with $\cF_\tau \equiv \mathcal{I}$, satisfies
	\begin{equation}\label{eq:stability}
		\nm{u_h^{n+1}}^2\leq \left(1+C_{\mathrm{s}}\tau\right)\nm{u_h^n}^2 + C\tau\sum_{0\leq \ell<s}\nm{\ff^{n,\ell}}^2.
	\end{equation}
\end{assumption} 

\cref{prop:stab} is collected from \cite{xu2020superconvergence}. See also \cite{sun2017rk4,sun2019strong,xu2019l2,sun2022energy} for related results. Most of these works focus on the case $\ff\equiv 0$, but the extension to $\ff\not\equiv 0$ is straight forward. See for example, \cite{xu2020error}, for the case of the classical fourth-order RK method. General stability criteria of an arbitrary RK method can also be found in the aforementioned literature. 
\begin{proposition}\label{prop:stab}
	Consider an $r$th-order $r$-stage RK method for linear problems. 
	\begin{enumerate}
		\item If $r\equiv 1\pmod 4$, then \cref{assp:stab} holds with $\kappa = 1+1/r$. 
		\item If $r\equiv 2\pmod 4$, then \cref{assp:stab} holds with $\kappa = 1+1/(r+1)$.
		\item If $r\equiv 3\pmod 4$, then \cref{assp:stab} holds with $\kappa = 1$ and $C_{\mathrm{s}} = 0$.
		\item If $r\equiv 0\pmod 4$, then after combining every 2 or 3 time steps as a single step and renumbering the stages, \cref{assp:stab} holds with $\kappa = 1$ and $C_{\mathrm{s}} = 0$.
	\end{enumerate}
\end{proposition}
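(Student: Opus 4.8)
The plan is to prove \eqref{eq:stability} by an energy method, using that for a linear constant-coefficient problem an $r$-stage, $r$th-order explicit RK scheme advances the homogeneous DG solution exactly by the stability polynomial of the spatial operator. Define $L_h:\mathbb V^k\to\mathbb V^k$ by $\ip{L_h w}{v}=\dg{w}{v}$ for all $v\in\mathbb V^k$, so that the semidiscrete DG scheme for \eqref{eq:nonhom} is $\tfrac{\mathrm d}{\mathrm dt}u_h=L_h u_h+\ff_h$ with $\ff_h$ the $L^2$-projection of $\ff$. Since the scheme has $r$ stages and order $r$, its one-step map on the homogeneous equation is $u_h^{n+1}=R(\tau L_h)u_h^n$, where $R(z)=\sum_{j=0}^r z^j/j!$ is necessarily the degree-$r$ truncation of $\mathrm e^z$; the source terms $\ff^{n,\ell}$ enter affinely and will be tracked separately. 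The two structural facts I would record are the energy dissipation $\dg{w}{w}=-\tfrac\beta2\sum_j\jump{w}_{j+\hf}^2\le 0$ (with the analogous 2D identity from \eqref{eq:H-2D}), giving $\ip{L_h w}{w}\le 0$, and the inverse-type bound $\nm{L_h w}\le Ch^{-1}\nm{w}$ that follows from \eqref{eq:H} and \eqref{eq:normequiv-1}--\eqref{eq:normequiv-2}.

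Next I would expand the homogeneous energy balance
\[
\nm{u_h^{n+1}}^2-\nm{u_h^n}^2=\sum_{\substack{0\le i,j\le r\\ (i,j)\ne(0,0)}}\frac{\tau^{i+j}}{i!\,j!}\,\ip{L_h^{\,i}u_h^n}{L_h^{\,j}u_h^n}
\]
and group the terms by total degree $p=i+j$. The degree-one contribution equals $2\tau\ip{L_h u_h^n}{u_h^n}=-\beta\tau\sum_j\jump{u_h^n}_{j+\hf}^2\le 0$, the sole term that is dissipative a priori. The technical heart is a cancellation lemma: because $R$ matches $\mathrm e^z$ through degree $r$, the order conditions force the contributions of total degree $p\le r$ to telescope into the dissipation up to remainders that the degree-one term can absorb, so that the first genuinely surviving (``critical'') term has degree $r+1$, or degree $r+2$ when the degree-$(r+1)$ coefficient vanishes. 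All still-higher-degree remainders are bounded crudely through the inverse estimate $\nm{L_h^{\,m}w}\le (Ch^{-1})^m\nm{w}$, yielding $C\tau^p h^{-p}\nm{u_h^n}^2$ for $p\ge r+1$.

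The crux is the sign of the critical term, which is governed by $\mathrm{Re}(\mathrm i^{\,r+1})$ (or $\mathrm{Re}(\mathrm i^{\,r+2})$), hence cycles with $r\bmod 4$. When $r\equiv 3\pmod 4$ the critical term is itself dissipative; together with the degree-one dissipation it dominates all remaining indefinite terms under $\tau\le C_{\mathrm{CFL}}h$, so the whole increment is nonpositive and \eqref{eq:stability} holds with $\kappa=1$ and $C_{\mathrm s}=0$. When $r\equiv 1\pmod 4$ the critical term is antidissipative of size $\tau^{r+1}h^{-(r+1)}\nm{u_h^n}^2$; choosing $\kappa=1+1/r$ makes it and every higher remainder at most $C\tau\nm{u_h^n}^2$, producing the factor $1+C_{\mathrm s}\tau$. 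The case $r\equiv 2\pmod 4$ is analogous but the obstruction first appears at degree $r+2$, so $\kappa=1+1/(r+1)$ suffices. Finally, for $r\equiv 0\pmod 4$ a single step leaves an indefinite critical term; I would instead take $R(\tau L_h)^2$ or $R(\tau L_h)^3$ as the one-step map (combining $2$ or $3$ steps and renumbering stages), whose composite structure restores dissipativity and again gives $\kappa=1$, $C_{\mathrm s}=0$.

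The inhomogeneous contributions from $\ff^{n,\ell}$ are handled at the end by Cauchy--Schwarz and Young's inequality on each stage, producing the term $C\tau\sum_{0\le\ell<s}\nm{\ff^{n,\ell}}^2$ together with pieces absorbable into $(1+C_{\mathrm s}\tau)\nm{u_h^n}^2$; assembling these with the homogeneous estimate gives \eqref{eq:stability}. The main obstacle is the cancellation lemma and the attendant sign bookkeeping: one must show that the contributions below degree $r+1$ truly reduce to the dissipation, and then certify the sign of the single critical term against that dissipation. This is delicate precisely because $L_h$ is \emph{non-normal} (it splits into a skew transport part and a negative-definite upwind part), so the scalar von Neumann intuition does not transfer and a genuine operator-energy argument is needed; the $r\equiv 0$ case, where dissipativity is recovered only after combining steps, is the most intricate. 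This analysis is carried out in \cite{xu2020superconvergence} (see also \cite{sun2019strong,xu2019l2,sun2022energy}), and I would follow that argument.
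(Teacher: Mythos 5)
Your proposal matches the paper's treatment: the paper does not actually prove \cref{prop:stab} but simply collects it from \cite{xu2020superconvergence} (with \cite{sun2017rk4,sun2019strong,xu2019l2,sun2022energy} as related results and \cite{xu2020error} for the extension to $\ff\not\equiv 0$), and your energy-method sketch --- the one-step map $R(\tau L_h)$ with $R$ the degree-$r$ Taylor truncation of $\mathrm{e}^z$, the critical term of degree $r+1$ (or $r+2$) whose sign cycles with $r\bmod 4$, crude inverse-estimate bounds that reproduce exactly the exponents $\kappa=1+1/r$ and $\kappa=1+1/(r+1)$, and step-combination for $r\equiv 0\pmod 4$ --- is a faithful outline of the arguments in those references, to which you correctly defer for the delicate cancellation bookkeeping in the non-normal setting. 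Since you ultimately invoke the same sources the paper cites, this is essentially the same approach, and I find no gap.
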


\subsubsection{Reference functions}\label{sec:refsol} 
We will only consider $\ff \equiv 0$ in \eqref{eq:rk} in our error estimates. We introduce the reference solution $U^{n,0} = u(x,t^n)$ and define recursively 
\begin{align}
	\ip{U^{n,\ell+1}}{v} =\;& \sum_{0\leq \mykappa \leq \ell} \left(c_{\ell \mykappa}\ip{U^{n,\mykappa}}{v} + \tau d_{\ell \mykappa}\dg{U^{n,\mykappa}}{v} \right) + \tau\ip{\rho^{n,\ell}}{v}\quad \text{for }0\leq \ell \leq s-1.\label{eq:ref}
\end{align}
Here $\rho^{n,\ell} \equiv 0$ for $\ell<s-1$ and $\rho^{n,s-1}$ is set so that $U^{n,s} = u(x,t^{n+1})=U^{n+1,0}$. As a convention, $U^{n}:=U^{n,0}$. With a Taylor expansion, it can be shown that 
\begin{equation}\label{eq:estrho}
	\nm{\rho^{n,s-1}}\leq C\tau^{r}.
\end{equation}
\begin{remark}\label{rmk:reffunc}
	Since $U^{n,m}$ is smooth, we have $\dg{U^{n,m}}{v} = -\ip{{\bm \beta}\cdot \nabla U^{n,m}}{v}$. Hence it can be seen from \eqref{eq:ref} that we need $u(\cdot,t)\in H^s(\Omega)$ to ensure all $U^{n,\ell}$ and $\rho^{n,\ell}$ are well defined. But typically, we expect the required regularity of $u$ to be close to $r$ instead of $s$. This could lead to very restrictive regularity condition on $u$ for some RK schemes with $s\gg r$, or in the case $s = r$ and $r\equiv 0\pmod 4$, for which we combine multiple time steps in the stability estimate. The dependence of the regularity condition on $s$ can be avoided by carefully truncating the high-order terms in the definition of $U^{n,m}$. This more sophisticated choice of $U^{n,\ell}$ will recover the minimum regularity requirement on $u$. We refer to \cite[Section 4.1.2]{xu2020superconvergence} and \cite[Section 5.2]{xu2020error} for details, and refrain from pursuing it here for simplicity. 
\end{remark}

\subsection{Proof of \cref{thm:err}}
\label{sec:proofmain}

For simplicity, we assume that $h\leq 1$ and $\tau \leq 1$ are sufficiently small. The route map of the fully discrete error analysis is outlined as follows. 
\begin{enumerate}
	\item We make an a priori assumption for all $0\leq n\leq n_\star$ and $1\leq \ell\leq s$, 
	\begin{equation}\label{eq:apriori}
		\nm{\xi^{n,\ell}}_{L^\infty(\Omega)}:=\nm{u_h^{n,\ell} - P^\star U^{n,\ell}}_{L^\infty(\Omega)}\leq h.
	\end{equation} 
	We prove \cref{thm:err} under this assumption through the following steps.
	\begin{enumerate}
		\item We rewrite the OEDG scheme and its corresponding error equation in the form of an RKDG scheme with a source term at each stage. 
		\item We show that these additional source terms are small and high-order terms.  
		\item The steps outline above lead to an error equation within a single time step. We then apply the discrete Gr\"onwall's inequality to prove the global-in-time error estimates.
	\end{enumerate} 
	\item We prove the a priori assumption \eqref{eq:apriori} inductively.
\end{enumerate}

Now, we will provide the proof of \cref{thm:err}. The proofs of the associated lemmas will be deferred to the next section. As previously stated, Steps 1 requires the assumption  \eqref{eq:apriori}, which is used in \cref{lem:Y} at Step 1(b).

\emph{Step 1(a).} Set $\ff^{n,\mykappa}\equiv 0$ and add $\ip{u_\sig^{n,\ell+1}-u_h^{n,\ell+1}}{v}$ on both sides of \eqref{eq:oedg-uni1}. It yields
\begin{equation}
	\ip{u_\sig^{n,\ell+1}}{v} = \sum_{0\leq \mykappa \leq \ell} \left(c_{\ell \mykappa}\ip{u_\sig^{n,\mykappa}}{v} + \tau d_{\ell \mykappa}\dg{u_\sig^{n,\mykappa}}{v}\right) + \ip{u_\sig^{n,\ell+1}-u_h^{n,\ell+1}}{v}, \label{eq:rksig}
\end{equation}
for $ 0\leq \ell\leq  s-1$. We define 
\begin{equation}
	\xi_\sig^{n,m} = u_\sig^{n,m} -  P^\star  U^{n,m}, \  \xi^{n,m} = u_h^{n,m} -  P^\star  U^{n,m} \quand\eta^{n,m} = U^{n,m}- P^\star  U^{n,m}.
\end{equation} 
Subtracting \eqref{eq:ref} from \eqref{eq:rksig} gives
\begin{equation}\label{eq:xiYZ}
	\begin{aligned}
		\ip{\xi_\sig^{n,\ell+1}}{v} = &\sum_{0\leq \mykappa\leq \ell} \left(c_{\ell\mykappa}\ip{\xi_\sig^{n,\mykappa}}{v} + \tau d_{\ell \mykappa} \dg{\xi_\sig^{n,\mykappa}}{v}\right) + \tau \cY^{n,\ell}(v) + \tau \cZ^{n,\ell}(v) ,
	\end{aligned}
\end{equation}
where
\begin{align}
	\cY^{n,\ell}(v) =\;& \tau^{-1}\ip{u_\sig^{n,\ell+1}-u_h^{n,\ell+1}}{v},\label{eq:defY}\\
	\cZ^{n,\ell}(v) =\;& \ip{\eta_{\mathrm{c}}^{n,\ell}}{v} - \dg{\eta_{\mathrm{d}}^{n,\ell}}{v} - \ip{\rho^{n,\ell}}{v},\label{eq:Z}
\end{align}
with 
\begin{equation}\label{eq:etas}
	{\eta_{\mathrm{c}}^{n,\ell}} = \tau^{-1}\left({\eta^{n,\ell+1}} -\sum_{0\leq\mykappa\leq \ell}c_{\ell\mykappa}{\eta^{n,\mykappa}}\right) \quand \eta_{\mathrm{d}}^{n,\ell} = \sum_{0\leq\mykappa\leq \ell} d_{\ell \mykappa} {\eta^{n,\mykappa}}.
\end{equation}
To apply the stability estimate in \cref{assp:stab}, we rewrite \eqref{eq:xiYZ} as
\begin{equation}\label{eq:xiF}
	\ip{\xi_\sig^{n,\ell+1}}{v} = \sum_{0\leq \mykappa\leq \ell} \left(c_{\ell\mykappa}\ip{\xi_\sig^{n,\mykappa}}{v} + \tau d_{\ell \mykappa} \left( \dg{\xi_\sig^{n,\mykappa}}{v} + \ip{\FF^{n,\mykappa}}{v}\right) \right), \quad  0\leq\ell \leq  s-1.
\end{equation}
Here $\FF^{n,\mykappa}$ is defined inductively as 
\begin{equation}\label{eq:F}
	d_{\ell \ell}\ip{\FF^{n,\ell}}{v} = \cY^{n,\ell}(v) + \cZ^{n,\ell}(v) - \sum_{0\leq \mykappa \leq \ell-1}d_{\ell\mykappa}(\FF^{n,\mykappa},v), \quad 0\leq \ell \leq s-1.
\end{equation}

\emph{Step 1(b).} From the definition of $\FF^{n,\ell}$ in \eqref{eq:F}, one can see that 
\begin{equation}\label{eq:estF}
	\nm{\FF^{n,\ell}} \leq C\sum_{0\leq \mykappa\leq \ell}\left(\nm{\cY^{n,\mykappa}} + \nm{\cZ^{n,\mykappa}}\right).
\end{equation} 
Here $\nm{\cY^{n,\mykappa}} = \sup_{v\in {\mathbb V}^k, v\neq 0} \frac{\nm{\cY^{n,\mykappa}(v)}}{\nm{v}}$ and $\nm{\cZ^{n,\mykappa}} = \sup_{v\in {\mathbb V}^k, v\neq 0} \frac{\nm{\cZ^{n,\mykappa}(v)}}{\nm{v}}$. Consequently, to estimate the norm of the source term $\FF^{n,\ell}$, we require estimates of $\nm{\cZ^{n,\ell}}$ and $\nm{\cY^{n,\ell}}$, which are provided below. Their proofs will be postponed to \Cref{sec:Z} and \Cref{sec:lem:Y}, respectively. 
\begin{lemma}\label{prop:Z}
	$\nm{\cZ^{n,\ell}} \leq C\left(h^{k+1} + \tau^{r}\right).$
\end{lemma}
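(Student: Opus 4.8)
The plan is to bound the three pieces of $\cZ^{n,\ell}(v)$ in \eqref{eq:Z} separately, exploiting the linearity of the projection $P^\star$ together with its approximation and superconvergence properties \eqref{eq:projprop}. First I would dispatch the term $-\ip{\rho^{n,\ell}}{v}$: since $\rho^{n,\ell}\equiv 0$ for $\ell<s-1$ and $\nm{\rho^{n,s-1}}\leq C\tau^{r}$ by \eqref{eq:estrho}, Cauchy--Schwarz gives $|\ip{\rho^{n,\ell}}{v}|\leq C\tau^{r}\nm{v}$, which accounts for the $\tau^{r}$ part of the bound.

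Next, for the flux-type term $\dg{\eta_{\mathrm{d}}^{n,\ell}}{v}$, I would use the linearity of $P^\star$ to write $\eta_{\mathrm{d}}^{n,\ell} = V^{n,\ell} - P^\star V^{n,\ell}$ with $V^{n,\ell} := \sum_{0\leq \mykappa\leq\ell} d_{\ell\mykappa} U^{n,\mykappa}$. Since the reference functions $U^{n,\mykappa}$ are smooth, $V^{n,\ell}$ is smooth with $H^{k+1}$-seminorm bounded independently of $\tau$ (controlled by the Sobolev norms of $u$). The superconvergence estimate \eqref{eq:projprop-2} then yields $|\dg{\eta_{\mathrm{d}}^{n,\ell}}{v}| = |\dg{V^{n,\ell}-P^\star V^{n,\ell}}{v}|\leq Ch^{k+1}\nm{v}$, contributing the $h^{k+1}$ part.

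The only delicate term is $\ip{\eta_{\mathrm{c}}^{n,\ell}}{v}$, since $\eta_{\mathrm{c}}^{n,\ell}$ in \eqref{eq:etas} carries the dangerous factor $\tau^{-1}$. Here I would again use linearity to write $\eta_{\mathrm{c}}^{n,\ell} = W^{n,\ell} - P^\star W^{n,\ell}$ with $W^{n,\ell} := \tau^{-1}\big(U^{n,\ell+1} - \sum_{0\leq\mykappa\leq\ell} c_{\ell\mykappa} U^{n,\mykappa}\big)$, and the crux is to show that $W^{n,\ell}$ is actually smooth with a $\tau$-independent $H^{k+1}$-seminorm. This follows from the defining recursion \eqref{eq:ref}: using that $U^{n,\mykappa}$ is smooth so that $\dg{U^{n,\mykappa}}{v} = -\ip{{\bm\beta}\cdot\nabla U^{n,\mykappa}}{v}$ (as in \cref{rmk:reffunc}) and that $\sum_{0\leq\mykappa\leq\ell} c_{\ell\mykappa}=1$, the $O(\tau^{-1})$ difference collapses to $W^{n,\ell} = -\sum_{0\leq\mykappa\leq\ell} d_{\ell\mykappa}\,{\bm\beta}\cdot\nabla U^{n,\mykappa} + \rho^{n,\ell}$, whose derivatives are bounded uniformly in $\tau$ (for $\ell<s-1$ we have $\rho^{n,\ell}\equiv 0$, while $\rho^{n,s-1}$ is smooth with bounded seminorms by its Taylor-expansion construction). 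The approximation estimate \eqref{eq:projprop-1} then gives $\nm{\eta_{\mathrm{c}}^{n,\ell}}\leq Ch^{k+1}$ and hence $|\ip{\eta_{\mathrm{c}}^{n,\ell}}{v}|\leq Ch^{k+1}\nm{v}$.

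Combining the three bounds, dividing by $\nm{v}$, and taking the supremum over $v\in\mathbb{V}^k$ yields $\nm{\cZ^{n,\ell}}\leq C(h^{k+1}+\tau^{r})$. The main obstacle is precisely the exact cancellation of the $\tau^{-1}$ factor in the $\eta_{\mathrm{c}}$ term: this is the RK consistency encoded in the reference-function recursion \eqref{eq:ref}, and making it rigorous relies on the smoothness of the reference functions (and thus, as noted in \cref{rmk:reffunc}, sufficient regularity of $u$) so that $W^{n,\ell}$ has $\tau$-independent higher-order seminorms. The other two terms are routine consequences of the two projection properties in \eqref{eq:projprop} and the truncation estimate \eqref{eq:estrho}.
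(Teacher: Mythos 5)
Your proposal is correct and follows essentially the same route as the paper: the $\rho^{n,\ell}$ term is handled by \eqref{eq:estrho}, the $\eta_{\mathrm{d}}$ term by the superconvergence property \eqref{eq:projprop-2}, and the $\tau^{-1}$ in $\eta_{\mathrm{c}}^{n,\ell}$ is cancelled by exactly the same observation — that the recursion \eqref{eq:ref} together with $\sum_{\mykappa}c_{\ell\mykappa}=1$ collapses $\tau^{-1}\bigl(U^{n,\ell+1}-\sum_{\mykappa}c_{\ell\mykappa}U^{n,\mykappa}\bigr)$ to the smooth function $-\sum_{\mykappa}d_{\ell\mykappa}\,{\bm\beta}\cdot\nabla U^{n,\mykappa}+\rho^{n,\ell}$, after which \eqref{eq:projprop-1} applies. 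This matches the paper's argument (which cites it as a special case of a lemma in the RKDG superconvergence literature), so no further comparison is needed.
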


\begin{lemma}\label{lem:Y}
	If $\nm{\xi^{n,m}}_{L^\infty(\Omega)}\leq h$ for all $1\leq m\leq \ell+1$, then
	\begin{equation}
		\nm{\cY^{n,\ell}} \leq C\left(\nm{\xi_\sigma^n} + h^{k+1} + \tau^{r}\right).
	\end{equation}
\end{lemma}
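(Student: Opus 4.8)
The plan is to exploit the explicit OE solver \eqref{usig} to turn $\cY^{n,\ell}$ into a concrete, cell-wise quantity and then estimate it using the a priori bound \eqref{eq:apriori} together with a short within-step stability argument. Since $u_\sigma^{n,\ell+1}-u_h^{n,\ell+1}\in{\mathbb V}^k$, testing \eqref{eq:defY} against $v=u_\sigma^{n,\ell+1}-u_h^{n,\ell+1}$ gives $\nm{\cY^{n,\ell}}=\tau^{-1}\nm{u_\sigma^{n,\ell+1}-u_h^{n,\ell+1}}$, so it suffices to estimate the raw OE perturbation. From \eqref{usig} and the elementary inequality $|e^{-a}-1|\le a$ for $a\ge 0$, the modal damping factors obey $|e^{-a_{i,j}}-1|\le a_{i,j}\le a_{k,j}$ with $a_{k,j}=\frac{\beta\tau}{h_j}\sum_{m=0}^k\sig_j^m$; orthogonality of the modal basis then yields the per-cell bound $\nm{u_\sigma^{n,\ell+1}-u_h^{n,\ell+1}}_{L^2(I_j)}\le a_{k,j}\nm{u_h^{n,\ell+1}-\overline{u}_j^{n,\ell+1}}_{L^2(I_j)}$. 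The crucial point is that the factor $\tau$ inside $a_{k,j}$ cancels the $\tau^{-1}$ in $\cY^{n,\ell}$, leaving $\frac{\beta}{h_j}\big(\sum_m\sig_j^m\big)\nm{u_h^{n,\ell+1}-\overline{u}_j^{n,\ell+1}}_{L^2(I_j)}$ to be controlled cell by cell (the 2D case is identical, with $\delta_K^m$ and the Liu--Shu--Zhang projection in place of $\beta_j\sig_j^m/h_j$ and the Gauss--Radau projection).

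Next I would decompose $u_h^{n,\ell+1}=P^\star U^{n,\ell+1}+\xi^{n,\ell+1}$ and estimate the two factors separately. For the damping coefficient, because $\partial_x^m U^{n,\ell+1}$ is continuous one has $\jump{\partial_x^m P^\star U^{n,\ell+1}}=-\jump{\partial_x^m\eta^{n,\ell+1}}$, and the optimal pointwise projection estimate $\nm{\partial_x^m\eta}_{L^\infty(I_j)}\le Ch^{k+1-m}$ (obtained by the usual scaling plus Bramble--Hilbert argument, which is sharper than the $L^2$-based \eqref{eq:projprop-1}) shows the smooth part contributes $h_j^m\big|\jump{\partial_x^m\eta}\big|\le Ch^{k+1}$; the denominator in \eqref{eq:1Dsigma} is harmless thanks to the hypothesis $\nm{u_h^{n,\ell}-\mathrm{avg}(u_h^{n,\ell})}_{L^\infty(\Omega)}\ge C>0$. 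The error part is handled by the inverse inequality $\big|\jump{\partial_x^m\xi^{n,\ell+1}}\big|\le Ch^{-m}\nm{\xi^{n,\ell+1}}_{L^\infty(I_{j-1}\cup I_j\cup I_{j+1})}$, so altogether $\frac{\beta}{h_j}\sum_m\sig_j^m\le C(h^k+h^{-1}X_j)$ with $X_j:=\nm{\xi^{n,\ell+1}}_{L^\infty(I_{j-1}\cup I_j\cup I_{j+1})}$. For the second factor, a Poincar\'e estimate gives the smooth deviation $\nm{P^\star U^{n,\ell+1}-\overline{P^\star U^{n,\ell+1}}}_{L^2(I_j)}\le Ch^{3/2}$, while $\nm{\xi^{n,\ell+1}-\overline{\xi}_j}_{L^2(I_j)}\le\nm{\xi^{n,\ell+1}}_{L^2(I_j)}\le Ch^{1/2}X_j$.

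Multiplying, summing the squares over all cells, converting $L^\infty$ norms of $\xi^{n,\ell+1}$ into $L^2$ norms through the inverse inequality \eqref{eq:normequiv}, and invoking \eqref{eq:apriori} to absorb the quadratic-in-$\xi$ leftovers, all the cross terms collapse and I expect to obtain $\nm{\cY^{n,\ell}}\le C\big(h^{k+1}+\nm{\xi^{n,\ell+1}}\big)$, uniformly in $d=1,2$. It then remains to trade the current-stage error for the start-of-step error. For this I would run a short within-step stability induction over the finitely many stages: the pre-OE error satisfies the RKDG recursion $\ip{\xi^{n,m+1}}{v}=\sum_{0\le \mykappa\le m}\big(c_{m\mykappa}\ip{\xi_\sigma^{n,\mykappa}}{v}+\tau d_{m\mykappa}\dg{\xi_\sigma^{n,\mykappa}}{v}\big)+\tau\cZ^{n,m}(v)$, whence \eqref{eq:H}, the CFL condition $\tau h^{-1}\le C$, and \cref{prop:Z} give $\nm{\xi^{n,m+1}}\le C\sum_{0\le\mykappa\le m}\nm{\xi_\sigma^{n,\mykappa}}+C\tau(h^{k+1}+\tau^r)$; combining this with $\nm{\xi_\sigma^{n,m+1}}\le(1+C\tau)\nm{\xi^{n,m+1}}+C\tau h^{k+1}$ (the OE perturbation once more, appearing only with a factor $\tau$) and iterating from $\xi^{n,0}=\xi_\sigma^n$ yields $\nm{\xi^{n,m}}\le C(\nm{\xi_\sigma^n}+h^{k+1}+\tau^r)$ for all stages $m$. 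Substituting this into the bound for $\cY^{n,\ell}$ gives the claim.

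The delicate points are twofold. First, obtaining the sharp exponent $h^{k+1}$ rather than $h^{k+1/2}$ forces the use of the optimal pointwise derivative estimate for the projection error, which is stronger than the $L^2$-based property \eqref{eq:projprop-1} explicitly recorded for $P^\star$; I would either prove it directly by scaling or restrict to smooth $u$ and use $W^{k+1,\infty}$ bounds, consistent with the paper's convention of suppressing Sobolev-norm dependence. Second, one must close the argument without circularity: the OE perturbation is simultaneously the quantity $\cY$ to be bounded and (with a benign factor $\tau$) an ingredient of the within-step stability used to control $\nm{\xi^{n,\ell+1}}$. The resolution is that $\cY$ only ever enters the stage recursion multiplied by $\tau$, so its contribution to the stage-wise growth is a harmless $O(\tau)$ multiple of the error, and the induction on $m$ uses at each step only already-bounded earlier-stage quantities — this is the main obstacle to get right.
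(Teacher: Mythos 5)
Your proposal is correct, and its overall architecture coincides with the paper's: both reduce $\nm{\cY^{n,\ell}}$ to $\tau^{-1}\nm{u_\sig^{n,\ell+1}-u_h^{n,\ell+1}}$, bound this OE perturbation by $C\tau\left(\nm{\xi^{n,\ell+1}}+h^{k+1}\right)$ under the a priori assumption, and then trade the current-stage error for $\nm{\xi_\sig^n}$ by iterating the RK recursion over the stages (your within-step induction is exactly \cref{lem:xi-xis} combined with the induction carried out in \cref{sec:lem:Y}). The genuine difference lies in how the perturbation bound (the paper's \cref{lem:est-us-uh}) is derived. The paper works with the damping ODE itself: it tests against $u_\sig-u_h$, drops the nonnegative damping term, integrates in pseudo-time, and then invokes the separate estimates of \cref{lem:delta} and \cref{lem:perpP}, which are proved with the multiplicative trace inequality and the $L^2$-based approximation property \eqref{eq:projprop-1}. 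You instead exploit the exact exponential solver \eqref{usig} together with $|e^{-a}-1|\leq a$ and the monotonicity of the accumulated damping exponents; this is arguably more transparent and yields the same per-cell bound, but your route to the jump estimates passes through $L^\infty$ norms and therefore requires the pointwise projection estimate $\nm{\partial_x^m\eta}_{L^\infty(I_j)}\leq Ch^{k+1-m}$, which, as you correctly flag, is not among the recorded properties of $P^\star$ and must be supplied separately (it does hold for smooth $u$ by standard scaling and Bramble--Hilbert arguments, for both the Gauss--Radau and the Liu--Shu--Zhang projections). The paper's $L^2$/trace-inequality route avoids this extra ingredient at the cost of being slightly less explicit. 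Your resolution of the apparent circularity is also the one the paper uses: the perturbation enters the stage recursion only multiplied by $\tau$, so the finite induction over the stages closes without difficulty.
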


Applying \cref{prop:Z} and \cref{lem:Y} in \eqref{eq:estF} yields
\begin{equation}\label{eq:estFF}
	\nm{\FF^{n,\ell}} \leq C \left(\nm{\xi_\sig^{n}} + h^{k+1}+\tau^r\right).
\end{equation}

\emph{Step 1(c).} From \eqref{eq:xiF}, we see that $\xi_\sig^{n,\ell}$ satisfies the RKDG scheme accompanied by the source term $\FF^{n,\ell}$. Apply the stability result \eqref{eq:stability} to \eqref{eq:xiF} and use the estimate \eqref{eq:estFF}. It gives
\begin{equation}\label{eq:stabilityYZ}
		\nm{\xi_\sig^{n+1}}^2\leq \left(1+C_{\mathrm{s}}\tau\right)\nm{\xi_\sig^n}^2 + C\tau \sum_{0\leq \ell<s}\nm{\FF^{n,\ell}}^2\
		\leq \left(1+C\tau\right)\nm{\xi_\sig^n}^2 + C\tau\left(h^{2k+2}+\tau^{2r}\right).
\end{equation}
Since constants in \eqref{eq:stabilityYZ} are independent of $n$, one can apply the discrete Gr\"onwall's inequality to get
\begin{equation}\label{eq:gronwall}
	\nm{\xi_\sig^n}^2\leq C_\star^2\left(\nm{\xi_\sig^0}^2 + h^{2k+2} + \tau^{2r}\right)\quad \forall 0\leq n \leq \nn.
\end{equation}
Here we address that $C_\star$ is a constant that is independent of $h$, $\tau$, $n$, and $\nn$, but may depend on the final time $T = \nn\tau$. Then applying the inequality $\sqrt{a^2+b^2+c^2}\leq |a|+|b|+|c|$ gives
\begin{equation}\label{eq:xifinalest}
	\nm{\xi_\sig^n}\leq C_\star\left(\nm{\xi_\sig^0} + h^{k+1} + \tau^{r}\right)\quad \forall 0\leq n \leq \nn.
\end{equation}
Recall that $u_\sig^i - u(x,t^i) = \xi_\sig^i - \eta^i$ with $i = 0,n$. We can use the triangle inequality, the approximation property of $P^\star $, the fact $\nm{u_\sig^0 - u(\cdot,0)}\leq Ch^{k+1}$, to get
\begin{equation}
	\nm{u_\sig^n-u(\cdot,n\tau)}\leq C_\star\left(h^{k+1} + \tau^{r}\right)\quad \forall 0\leq n \leq \nn.
\end{equation}
The proof of \cref{thm:err}, under the assumption \eqref{eq:apriori}, can be completed by maximizing both sides over $n$ and observing the independence of $C_\star$ from $n$.

\emph{Step 2.} Note we have the following lemma, whose proof will be given in \Cref{sec:lem:ind1}.
\begin{lemma}\label{lem:ind1}
	Suppose $k>\frac{d}{2}$ and $r>\frac{d}{2}+1$. If $\nm{\xi_\sig^n}\leq C_{\star\star}(h^{k+1}+\tau^r)$, then  $\nm{\xi_\sig^{n,\ell}}\leq C(h^{k+1}+\tau^r)$ and $\nm{\xi^{n,\ell}}_{L^\infty(\Omega)}\leq h$ for all $1\leq \ell\leq s$ under the assumption $h\leq \overline{C}_{\star\star}$. Here $\overline{C}_{\star\star}$ is a fixed constant dependent on $C_{\star\star}$. 
\end{lemma}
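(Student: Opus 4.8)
The plan is to establish both estimates by a finite induction on the RK stage index $\ell$, advancing from $\xi_\sig^{n,0}=\xi_\sig^n$ (for which the bound is the very hypothesis $\nm{\xi_\sig^n}\leq C_{\star\star}(h^{k+1}+\tau^r)$) up through $\xi_\sig^{n,s}$. At each inductive step I will first produce the $L^\infty$ bound $\nm{\xi^{n,\ell+1}}_{L^\infty(\Omega)}\leq h$ on the stage error \emph{before} the OE step, and only afterwards the $L^2$ bound $\nm{\xi_\sig^{n,\ell+1}}\leq C(h^{k+1}+\tau^r)$ on the error \emph{after} the OE step. This ordering is the crux: the $L^\infty$ bound is exactly what \cref{lem:Y} requires as a hypothesis, while the $L^2$ bound is what \cref{lem:Y} is used to prove, so establishing the former without reference to the OE perturbation at stage $\ell+1$ is what breaks the apparent circularity.

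For the $L^2$ estimate I would write the error equation for $\xi^{n,\ell+1}$ by subtracting the reference recursion \eqref{eq:ref} from the RK stage \eqref{eq:oedg-uni1} with $\ff\equiv0$; this is precisely \eqref{eq:xiYZ} with the OE term $\cY$ deleted, namely
\[
	\ip{\xi^{n,\ell+1}}{v} = \sum_{0\leq \mykappa\leq \ell}\left(c_{\ell\mykappa}\ip{\xi_\sig^{n,\mykappa}}{v} + \tau d_{\ell\mykappa}\dg{\xi_\sig^{n,\mykappa}}{v}\right) + \tau\cZ^{n,\ell}(v).
\]
Choosing $v=\xi^{n,\ell+1}$, bounding the DG bilinear form by \eqref{eq:H}, invoking \cref{prop:Z} for $\cZ^{n,\ell}$, and using the CFL restriction $\tau\leq C_{\mathrm{CFL}}h^\kappa$ with $\kappa\geq1$ to absorb $\tau h^{-1}\leq C$, I obtain
\[
	\nm{\xi^{n,\ell+1}}\leq C\sum_{0\leq\mykappa\leq\ell}\nm{\xi_\sig^{n,\mykappa}} + C(h^{k+1}+\tau^r)\leq C(h^{k+1}+\tau^r),
\]
the last inequality using the inductive hypothesis on the earlier stages. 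Notably, this $L^2$ bound uses no information about the OE perturbation at stage $\ell+1$.

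Passing to $L^\infty$ via the inverse inequality \eqref{eq:normequiv} gives $\nm{\xi^{n,\ell+1}}_{L^\infty(\Omega)}\leq Ch^{-d/2}\nm{\xi^{n,\ell+1}}\leq h\cdot C(h^{k-d/2}+\tau^r h^{-d/2-1})$. Using $\tau\leq C_{\mathrm{CFL}}h^\kappa$ with $\kappa\geq1$ once more, the bracketed factor is controlled by $C(h^{k-d/2}+h^{r-d/2-1})$, which tends to $0$ as $h\to0$ \emph{exactly} because $k>\frac d2$ and $r>\frac d2+1$. Hence one can fix a threshold $\overline{C}_{\star\star}$ (depending on $C_{\star\star}$ through the constants above) so that $h\leq\overline{C}_{\star\star}$ forces the factor below $1$ and thus $\nm{\xi^{n,\ell+1}}_{L^\infty(\Omega)}\leq h$. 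I expect this to be the main obstacle: taming the negative power $h^{-d/2}$ produced by the inverse inequality is precisely where the dimension-dependent assumptions on $k$ and $r$ are consumed, and it is the only place in the argument where they are needed.

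Finally, with $\nm{\xi^{n,m}}_{L^\infty(\Omega)}\leq h$ now in hand for all $1\leq m\leq\ell+1$, \cref{lem:Y} applies and yields $\nm{\cY^{n,\ell}}\leq C(\nm{\xi_\sig^n}+h^{k+1}+\tau^r)$. Since \eqref{eq:defY} identifies the OE perturbation as $\nm{u_\sig^{n,\ell+1}-u_h^{n,\ell+1}}=\tau\nm{\cY^{n,\ell}}$, the triangle inequality $\nm{\xi_\sig^{n,\ell+1}}\leq\nm{\xi^{n,\ell+1}}+\nm{u_\sig^{n,\ell+1}-u_h^{n,\ell+1}}$ combined with $\tau\leq1$ and the hypothesis on $\nm{\xi_\sig^n}$ delivers $\nm{\xi_\sig^{n,\ell+1}}\leq C(h^{k+1}+\tau^r)$, closing the induction. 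The order is essential: the $L^\infty$ estimate comes purely from the RK-stage error equation and the inverse inequality, and only afterwards is \cref{lem:Y} invoked for the $L^2$ bound on $\xi_\sig^{n,\ell+1}$, so no circular dependence on \cref{lem:Y} arises.
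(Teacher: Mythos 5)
Your proof is correct and follows essentially the same route as the paper's: a stage-by-stage induction in which the pre-OE error $\xi^{n,\ell+1}$ is first bounded in $L^2$ via the RK error equation (this is exactly the content of \cref{lem:xi-xis}), then converted to the $L^\infty$ bound $\leq h$ by the inverse inequality \eqref{eq:normequiv}, consuming $k>\frac{d}{2}$ and $r>\frac{d}{2}+1$ in precisely the same place, and only afterwards is the OE perturbation controlled to close the $L^2$ bound on $\xi_\sig^{n,\ell+1}$. The only cosmetic difference is that you route the last step through \cref{lem:Y}, whereas the paper invokes \cref{lem:est-us-uh} directly; both rest on the same perturbation estimate, so the arguments coincide.
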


Now we verify the a prior assumption \eqref{eq:apriori}. 

Let us fix the constant $C_\star$ in \eqref{eq:xifinalest}, which is obtained in Step 1 under the a priori assumption \eqref{eq:apriori} and appears in the bound of $\nm{\xi_\sig^n}$ for all $0\leq n \leq \nn$. By the choice of the initial data, we have $\nm{\xi_\sig^0}\leq C_* h^{k+1}$. Without loss of generality, we assume $C_\star>1$ and $C_*>1$ and define $C_{\star\star} = C_\star (C_*+1)$. According to \cref{lem:ind1}, since $\nm{\xi_\sig^0} \leq C_{\star\star}(h^{k+1}+\tau^r)$, the {a priori assumption \eqref{eq:apriori}} holds for $n = 0$ and all {$1\leq \ell\leq s$} when $h\leq \overline{C}_{\star\star}$. Thus one can follow the argument in Step 1 and apply the discrete Gr\"onwall's inequality to get \eqref{eq:xifinalest} with $n = 1$, which can be elaborated as
\begin{equation}
	\nm{\xi_\sig^1}\leq C_\star\left(\nm{\xi_\sig^0}+h^{k+1}+\tau^r\right)\leq C_\star (C_*+1)(h^{k+1}+\tau^r) =  C_{\star\star}(h^{k+1}+\tau^r).
\end{equation}
Now the assumption in \cref{lem:ind1} becomes valid for $n = 1$. Hence one can conclude that the a priori assumption \eqref{eq:apriori} holds for $n =1$ and all {$1\leq \ell\leq s$} under the same prescribed condition $h\leq \overline{C}_{\star\star}$. Then one can continue further as that in Step 1 and apply discrete Gr\"onwall's inequality to $n=2$ to obtain
\begin{equation}
	\nm{\xi_\sig^2}\leq C_\star\left(\nm{\xi_\sig^0}+h^{k+1}+\tau^r\right) \leq C_{\star\star}(h^{k+1}+\tau^r).
\end{equation}
Thus by \cref{lem:ind1}, the a priori assumption holds for $n = 2$. This argument can be further continued and formalized by induction to show that $\nm{\xi_\sig^n}\leq C_{\star\star}(h^{k+1}+\tau^r)$, and the a priori assumption $\nm{\xi^{n,\ell}}_{L^\infty(\Omega)}\leq h$ holds for all $0\leq n\leq \nn$ and $1\leq \ell \leq s$, if $h\leq \overline{C}_{\star\star}$. This validates the a priori assumption \eqref{eq:apriori} and thus completes the proof of \cref{thm:err}.

\subsection{Proofs of lemmas}
It now remains to prove the lemmas used in the proof of \cref{thm:err}.
\subsubsection{Proof of \cref{prop:Z}}\label{sec:Z}
This lemma is a special case of \cite[Lemma 4.6]{xu2020superconvergence} with $q = 0$. Recall the definition in \eqref{eq:Z}. One can use the definition of $\rho^{n,\ell}$ in \Cref{sec:refsol} to get $|\ip{\rho^{n,\ell}}{v}|\leq C\tau^r\nm{v}$, and the superconvergence property in  \eqref{eq:projprop-2} to show $|\dg{\eta_{\mathrm{d}}}{v}| \leq C h^{k+1} \nm{v}$. Let $W_{\mathrm{c}}^{n,\ell} = U^{n,\ell+1}-\sum_{0\leq m\leq \ell}c_{\ell m} U^{n,m}$. Note that, from \eqref{eq:ref},  
\begin{equation}
	\ip{W_{\mathrm{c}}^{n,\ell}}{v} = \tau \ip{-\sum_{0\leq m\leq \ell}d_{\ell,m}{\bm \beta}\cdot \nabla U^{n,\ell} +\rho^{n,\ell}}{v}:=\tau \ip{w^{n,\ell}}{v}.
\end{equation} 
Hence $\ip{\eta_{\mathrm{c}}^{n,\ell}}{v} = \tau^{-1} \ip{W_{\mathrm{c}}^{n,\ell}-P^\star W_{\mathrm{c}}^{n,\ell}}{v} = \ip{w^{n,\ell} - P^\star w^{n,\ell}}{v}$. Then by the approximation property in \eqref{eq:projprop-1}, we have $|\ip{\eta_{\mathrm{c}}^{n,\ell}}{v}|\leq Ch^{k+1}\nm{v}$. The proposition can be proven after combining the three inequalities of $|\ip{\rho^{n,\ell}}{v}|$, $|\dg{\eta_{\mathrm{d}}}{v}|$, and $|\ip{\eta_{\mathrm{c}}^{n,\ell}}{v}|$. 
\subsubsection{Preparations for \cref{lem:Y} and \cref{lem:ind1}}
Before proving the remaining lemmas in \Cref{sec:proofmain}, we need to introduce two preparatory lemmas, \cref{lem:xi-xis} and \cref{lem:est-us-uh}. \cref{lem:delta} and \cref{lem:perpP} are used solely for proving  \cref{lem:est-us-uh} and will not be used elsewhere.  
\begin{lemma}\label{lem:xi-xis}
	$\nm{\xi^{n,\ell+1}}\leq C\left(\sum_{0\leq\mykappa\leq \ell}\nm{\xi_\sig^{n,\mykappa}}\right)+C\tau\left(h^{k+1}+\tau^r\right)$ for $0\leq \ell\leq s-1.$
\end{lemma}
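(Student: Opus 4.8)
The plan is to represent $\xi^{n,\ell+1}$ through a linear identity built directly from the raw stage update, and then test it against itself. First I would take the RK stage \eqref{eq:oedg-uni1} with $\ff\equiv 0$, subtract the reference recursion \eqref{eq:ref}, and add $\ip{\eta^{n,\ell+1}}{v}$ so that the left-hand side becomes $\ip{\xi^{n,\ell+1}}{v}$. Writing $u_\sig^{n,\mykappa}-U^{n,\mykappa}=\xi_\sig^{n,\mykappa}-\eta^{n,\mykappa}$ and collecting the resulting $\eta$- and $\rho$-contributions via the definitions in \eqref{eq:etas} and the bilinearity of $\dg{\cdot}{\cdot}$, these residual terms assemble exactly into $\tau\cZ^{n,\ell}(v)$ by \eqref{eq:Z}. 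This yields, for all $v\in\mathbb{V}^k$, the pre-OE analogue of \eqref{eq:xiYZ} but \emph{without} the $\cY$ term,
\begin{equation*}
	\ip{\xi^{n,\ell+1}}{v} = \sum_{0\leq\mykappa\leq\ell}\left(c_{\ell\mykappa}\ip{\xi_\sig^{n,\mykappa}}{v} + \tau d_{\ell\mykappa}\dg{\xi_\sig^{n,\mykappa}}{v}\right) + \tau\cZ^{n,\ell}(v).
\end{equation*}
The absence of $\cY$ is expected, since $u_h^{n,\ell+1}$ is the raw stage value, whereas $\cY$ in \eqref{eq:defY} measures only the subsequent OE perturbation $u_\sig^{n,\ell+1}-u_h^{n,\ell+1}$.

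Next I would set $v=\xi^{n,\ell+1}$ and bound each contribution. The mass terms are handled by Cauchy--Schwarz, $|\ip{\xi_\sig^{n,\mykappa}}{\xi^{n,\ell+1}}|\leq\nm{\xi_\sig^{n,\mykappa}}\nm{\xi^{n,\ell+1}}$; the DG bilinear terms by the boundedness estimate \eqref{eq:H}, $|\dg{\xi_\sig^{n,\mykappa}}{\xi^{n,\ell+1}}|\leq Ch^{-1}\nm{\xi_\sig^{n,\mykappa}}\nm{\xi^{n,\ell+1}}$; and the source term by \cref{prop:Z}, $|\cZ^{n,\ell}(\xi^{n,\ell+1})|\leq C(h^{k+1}+\tau^r)\nm{\xi^{n,\ell+1}}$. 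Dividing through by $\nm{\xi^{n,\ell+1}}$ (the case $\xi^{n,\ell+1}=0$ being trivial) gives
\begin{equation*}
	\nm{\xi^{n,\ell+1}}\leq \sum_{0\leq\mykappa\leq\ell}\left(|c_{\ell\mykappa}|+C\tau h^{-1}|d_{\ell\mykappa}|\right)\nm{\xi_\sig^{n,\mykappa}} + C\tau(h^{k+1}+\tau^r).
\end{equation*}

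Finally I would absorb the factor $\tau h^{-1}$. Under the CFL condition $\tau\leq C_{\mathrm{CFL}}h^\kappa$ with $\kappa\geq 1$ inherited from \cref{assp:stab}, together with $h\leq 1$, one has $\tau h^{-1}\leq C_{\mathrm{CFL}}h^{\kappa-1}\leq C_{\mathrm{CFL}}$, so each coefficient $|c_{\ell\mykappa}|+C\tau h^{-1}|d_{\ell\mykappa}|$ is bounded by a constant independent of $h$ and $\tau$, which produces the claimed bound. I expect the only genuine subtlety to be precisely this inverse power $h^{-1}$ carried by the DG form when estimated crudely through \eqref{eq:H}; it is controlled solely because the time step obeys $\tau\leq C_{\mathrm{CFL}}h^\kappa$ with $\kappa\geq 1$. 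Everything else is a routine reassembly of the stage error equation together with the already-established source bound \cref{prop:Z}. Notably, no a priori assumption \eqref{eq:apriori} enters here, which is why \cref{lem:xi-xis} serves as an unconditional preparatory estimate feeding the nonlinear analysis in \cref{lem:Y}.
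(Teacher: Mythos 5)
Your proposal is correct and follows essentially the same route as the paper: the identity you derive for $\ip{\xi^{n,\ell+1}}{v}$ is exactly the paper's starting point (obtained there by combining \eqref{eq:xiYZ} and \eqref{eq:defY} rather than re-deriving from the stage update, but it is the same equation), and the subsequent estimation via Cauchy--Schwarz, \eqref{eq:H}, \cref{prop:Z}, the choice $v=\xi^{n,\ell+1}$, and absorption of $\tau h^{-1}$ by the CFL condition matches the paper step for step. Your closing observation that the a priori assumption \eqref{eq:apriori} is not needed here is also consistent with the paper's use of this lemma.
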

\begin{proof}
	Combining \eqref{eq:xiYZ} and \eqref{eq:defY}, it yields
	\begin{equation}
		\begin{aligned}
			\ip{\xi^{n,\ell+1}}{v} =& \sum_{0\leq \mykappa\leq \ell} \left(c_{\ell\mykappa}\ip{\xi_\sig^{n,\mykappa}}{v} + \tau d_{\ell \mykappa} \dg{\xi_\sig^{n,\mykappa}}{v}\right) +\tau\cZ^{n,\ell}(v) \\
			\leq& \sum_{0\leq\mykappa\leq \ell}\left(|c_{\ell\mykappa}|\nm{\xi_\sig^{n,\mykappa}}\nm{v} + C\tau h^{-1}\nm{\xi_\sig^{n,\mykappa}}\nm{v}\right) +\tau\nm{\cZ^{n,\ell}}\nm{v}\\
			\leq& C\left(\sum_{0\leq\mykappa\leq \ell}\nm{\xi_\sig^{n,\mykappa}}+\tau\nm{\cZ^{n,\ell}}\right)\nm{v}.	
		\end{aligned}
	\end{equation}
	Here we have used the estimate \eqref{eq:H} and the time step constraint $\tau h^{-1}\leq C_{\mathrm{CFL}}h^{\kappa-1}\leq C_{\mathrm{CFL}}$. After taking $v = \xi^{n,\ell+1}$ and applying the estimate of $\nm{\cZ^{n,\ell}}$ in \cref{prop:Z}, we complete the proof of \cref{lem:xi-xis}.
\end{proof}

\begin{lemma}\label{lem:delta}
	$|\delta_K^m(u_h^{n,\ell+1})|\leq C\left(h^{-1-\frac{d}{2}}\nm{\xi^{n,\ell+1}}_{L^2(K)}+ h^{k-\frac{d}{2}}\nm{U^{n,\ell+1}}_{H^{k+1}(K)}\right).$
\end{lemma}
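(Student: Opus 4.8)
The plan is to reduce the damping coefficient to the jumps of high-order derivatives and then estimate those jumps separately for the numerical-error part and the projection-error part. First I would discard the trivial branch $\sig_j^m\equiv 0$ (for which $\delta_K^m=0$) and work with the nonzero case of \eqref{eq:1Dsigma} (resp.\ \eqref{eq:2Dsig}). Under the standing hypothesis $\nm{u_h^{n,\ell+1}-\mathrm{avg}(u_h^{n,\ell+1})}_{L^\infty(\Omega)}\ge C>0$ and the boundedness of the wave speed (here $\beta_j\equiv\beta$ for the linear equation \eqref{eq:nonhom}), the denominator is bounded below and the prefactor $\beta_j/h_j$ contributes exactly one negative power of $h$. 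Collecting the explicit $h^m$ factor, this gives in the unified setting
\begin{equation*}
  |\delta_K^m(u_h^{n,\ell+1})| \le C\, h^{m-1} \sum_{|\bm\alpha|=m} J_{\bm\alpha}, \qquad J_{\bm\alpha} := \tfrac1{|e|}\int_e \big|\jump{\partial^{\bm\alpha}u_h^{n,\ell+1}}_e\big|\,{\rm d}S
\end{equation*}
in 2D, with $J_{\bm\alpha}$ the nodal jump $|\jump{\partial_x^m u_h^{n,\ell+1}}_{j\pm\hf}|$ in 1D. I would then split $u_h^{n,\ell+1}=\xi^{n,\ell+1}+P^\star U^{n,\ell+1}$ and, using the smoothness of $U^{n,\ell+1}$ (so $\jump{\partial^{\bm\alpha}U^{n,\ell+1}}=0$), rewrite $\jump{\partial^{\bm\alpha}u_h^{n,\ell+1}}=\jump{\partial^{\bm\alpha}\xi^{n,\ell+1}}-\jump{\partial^{\bm\alpha}\eta^{n,\ell+1}}$ with $\eta^{n,\ell+1}=U^{n,\ell+1}-P^\star U^{n,\ell+1}$; this produces the two terms in the claimed bound.

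For the polynomial part $\xi^{n,\ell+1}\in\mathbb V^k$ I would bound the jumps purely by inverse estimates. Iterating \eqref{eq:normequiv-1} gives $\nm{\partial^{\bm\alpha}\xi^{n,\ell+1}}_{L^2(K)}\le Ch^{-m}\nm{\xi^{n,\ell+1}}_{L^2(K)}$; then the inverse trace estimate \eqref{eq:normequiv-2} (2D) or the $L^\infty$ inverse estimate \eqref{eq:normequiv} (1D) converts the interior norm into a boundary/nodal value at the price of an extra factor $h^{-\frac d2}$. After multiplying by $h^{m-1}$ the power $h^m$ cancels and one is left with $Ch^{-1-\frac d2}\nm{\xi^{n,\ell+1}}_{L^2(K)}$, which is the first term. (A jump at an interface also involves the neighbouring cell, whose contribution has the same form and is absorbed under quasi-uniformity.)

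For the projection-error part $\eta^{n,\ell+1}$, which is \emph{not} a polynomial, inverse inequalities are unavailable, so I would instead use a standard scaled trace inequality for $H^1$ functions applied to $\partial^{\bm\alpha}\eta^{n,\ell+1}$, namely $\nm{\partial^{\bm\alpha}\eta}_{L^2(\partial K)}\le C\big(h^{-\hf}\nm{\partial^{\bm\alpha}\eta}_{L^2(K)}+h^{\hf}\nm{\partial^{\bm\alpha}\eta}_{H^1(K)}\big)$, together with its 1D pointwise Sobolev analogue. The two norms on the right are controlled by the approximation property \eqref{eq:projprop-1} with $m$ and $m+1$ derivatives, yielding $Ch^{k+1-m}\nm{U^{n,\ell+1}}_{H^{k+1}(K)}$ and $Ch^{k-m}\nm{U^{n,\ell+1}}_{H^{k+1}(K)}$ respectively (the condition $m\le k$ guarantees $m+1\le k+1$, so \eqref{eq:projprop-1} applies). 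Combining these and accounting for the edge-averaging factor $|e|^{-\hf}\sim h^{-\hf}$ produces a jump bound $Ch^{\,k+1-m-\frac d2}\nm{U^{n,\ell+1}}_{H^{k+1}(K)}$; multiplying by $h^{m-1}$ gives exactly the second term $Ch^{k-\frac d2}\nm{U^{n,\ell+1}}_{H^{k+1}(K)}$.

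The hard part will be this projection-error estimate: since $\eta$ is non-polynomial one must carefully balance the scaled trace inequality against the sharp $H^m$ and $H^{m+1}$ bounds of $P^\star$, tracking the half-integer powers of $h$ arising from the boundary/edge terms so that everything collapses to the stated powers $h^{-1-\frac d2}$ and $h^{k-\frac d2}$. Adding the two contributions, and recalling that the case $\sig_j^m=0$ is trivial, completes the proof.
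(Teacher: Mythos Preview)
Your proposal is correct and follows essentially the same approach as the paper: bound the denominator below, split the jump via $\jump{u_h}=\jump{\xi}-\jump{\eta}$, then handle $\xi$ by inverse/trace estimates and $\eta$ by a trace inequality combined with the approximation property \eqref{eq:projprop-1}. The paper's proof differs only in presentation---it squares $\delta_K^m$, applies the multiplicative trace inequality $\nm{v}_{L^2(\partial K)}^2\le C\nm{v}_{L^2(K)}(\nm{v}_{H^1(K)}+h^{-1}\nm{v}_{L^2(K)})$ uniformly to both $\xi$ and $\eta$, and takes a square root at the end---but the ingredients and power-counting are identical to yours.
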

\begin{proof}
	We omit the superscript $(n,\ell+1)$ and denote by $w:=w^{n,\ell+1}$ with $w = u_h, \xi,\eta, U$ in this proof. We adopt the general notations in \eqref{eq:delta} and \eqref{eq:2Dsig} to prove the general case. The proof of the 1D case is similar and is hence omitted. Recall that $\|  u_h - \mathrm{avg}(u_h)  \|_{L^\infty(\Omega)}\geq C>0$ and $|\beta_{e}|\leq C$ for our linear advection equation. Hence with \eqref{eq:delta} and \eqref{eq:2Dsig}, we have 
	\begin{equation}\label{eq:delta-est-1}
		\begin{aligned}
			(\delta_K^m)^2 =& \left( \sum_{e\in \partial K}\frac{\beta_{e}}{h_{e,K}}\frac{ (2m+1)h_{e,K}^{m}}{2(2k-1)m!}  \sum_{|{\bm \alpha}|=m} \frac{ \frac{1}{|e|} \int_{e}  \big|\jump{\partial^{\bm \alpha} u_h}_e \big| {\rm d}S  } { \|  u_h - \mathrm{avg}(u_h)  \|_{L^\infty(\Omega)}  }\right)^2\\
			\leq& C\sum_{e\in\partial K}\sum_{|{\bm \alpha}|=m}h^{2m-2}\left(\frac{1}{|e|}\int_e \big|\jump{\partial^{\bm \alpha} u_h}_e \big| {\rm d}S\right)^2.
		\end{aligned}
	\end{equation}
	With the Cauchy--Schwarz inequality, the fact $\jump{u_h} = \jump{\xi - \eta +U} = \jump{\xi}-\jump{\eta}$, and the inequality $(a+b)^2 \leq 2a^2+2b^2$, we have 
	\begin{equation}
		\left(\frac{1}{|e|}\int_e \big|\jump{\partial^{\bm \alpha} u_h}_e \big| {\rm d}S\right)^2\leq \frac{1}{|e|}\int_e\jump{\partial^{\bm \alpha} u_h}_e^2 {\rm d}S \leq Ch^{-(d-1)} \left(\int_e \jump{\partial^{\bm \alpha} \xi}_e^2 {\rm d}S + \int_e \jump{\partial^{\bm \alpha} \eta}_e^2 {\rm d}S\right).
	\end{equation}
	Substituting it into \eqref{eq:delta-est-1} yields
	\begin{equation}\label{eq:deltaest2-2}
		(\delta_K^m)^2\leq C\sum_{|\bm \alpha| = m}\sum_{e\in \partial K} h^{2m-1-d}\left(\int_e \jump{\partial^{\bm \alpha} \xi}_e^2 {\rm d}S + \int_e \jump{\partial^{\bm \alpha} \eta}_e^2 {\rm d}S\right).
	\end{equation}
	With the multiplicative trace inequality (see, e.g., \cite[Lemma 3.1]{dolejvsi2002finite}) and the inverse estimates, it yields
	\begin{equation}
		\begin{aligned}
			\sum_{e\in \partial K}\int_e \jump{\partial^{\bm \alpha} \xi}_e^2 {\rm d}S \leq& {C}	\nm{\partial^{\bm \alpha} \xi}_{L^2(K)}\left(\nm{\partial^{\bm \alpha} \xi}_{H^1(K)} + h^{-1}\nm{\partial^{\bm \alpha} \xi}_{L^2(K)}\right)
			\leq Ch^{-2m-1}\nm{\xi}_{L^2(K)}^2.\label{eq:xiest2}
		\end{aligned}
	\end{equation}
	Similarly, using the approximation property of $P^\star$, it yields
	\begin{equation}
		\begin{aligned}
			\sum_{e\in \partial K}\int_e \jump{\partial^{\bm \alpha} \eta}_e^2 {\rm d}S\leq& C\nm{\partial^{\bm \alpha} \eta}_{L^2(K)}\left(\nm{\partial^{\bm \alpha} \eta}_{H^1(K)}+h^{-1}\nm{\partial^{\bm \alpha} \eta}_{L^2(K)}\right)
			\leq Ch^{2k+1-2m}\nm{U}_{H^{k+1}(K)}^2.\label{eq:etaest2}
		\end{aligned}
	\end{equation}
	Substituting \eqref{eq:xiest2} and \eqref{eq:etaest2} into \eqref{eq:deltaest2-2}, it yields
	\begin{equation}
		(\delta_K^m)^2\leq C\left(h^{-d-2}\nm{\xi}_{L^2(K)}^2+ h^{2k-d}\nm{U}_{H^{k+1}(K)}^2\right).
	\end{equation}
	The proof is completed after taking a square root on both sides and noting that $\sqrt{a^2+b^2}\leq |a|+|b|$.
\end{proof}

\begin{lemma}\label{lem:perpP}
	If $\nm{\xi^{n,\ell+1}}_{L^\infty(\Omega)}\leq h$, then 
	$\nm{u_h^{n,\ell+1}-P^{m-1}u_h^{n,\ell+1}}_{L^2(K)} \leq Ch^{1+\frac{d}{2}}$, for $0\leq m\leq k$.
\end{lemma}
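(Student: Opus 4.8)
The plan is to peel off the three natural components of $u_h^{n,\ell+1}$ and estimate each separately. Recalling the definitions $\xi^{n,\ell+1} = u_h^{n,\ell+1} - P^\star U^{n,\ell+1}$ and $\eta^{n,\ell+1} = U^{n,\ell+1} - P^\star U^{n,\ell+1}$, we have the affine identity $u_h^{n,\ell+1} = \xi^{n,\ell+1} + U^{n,\ell+1} - \eta^{n,\ell+1}$. Since $P^{m-1}$ is linear and acts cellwise, applying $\mathcal I - P^{m-1}$ and dropping the superscript $(n,\ell+1)$ for brevity gives
\begin{equation*}
	u_h - P^{m-1}u_h = (\xi - P^{m-1}\xi) + (U - P^{m-1}U) - (\eta - P^{m-1}\eta),
\end{equation*}
and it suffices to bound each term on $K$ by $Ch^{1+\frac d2}$.

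The $\xi$-term is where the a priori hypothesis enters and is the conceptual crux of the lemma. Because $P^{m-1}$ is an $L^2$-orthogonal projection on each cell, it is a contraction, so $\nm{\xi - P^{m-1}\xi}_{L^2(K)}\leq \nm{\xi}_{L^2(K)}$. I would then trade the $L^\infty$ control for an $L^2$ bound using the cell volume, $\nm{\xi}_{L^2(K)}\leq |K|^{\hf}\nm{\xi}_{L^\infty(K)}\leq Ch^{\frac d2}\nm{\xi^{n,\ell+1}}_{L^\infty(\Omega)}\leq Ch^{1+\frac d2}$, where the final step invokes the assumption $\nm{\xi^{n,\ell+1}}_{L^\infty(\Omega)}\leq h$. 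This is the only place the hypothesis is used, and recognizing that the $L^\infty$ bound must be ``paid for'' with the factor $h^{\frac d2}$ from $|K|^{\hf}$ to reach the correct $L^2$ order is the main (and essentially the only) obstacle; the rest is routine.

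For the $\eta$-term, the same contraction property gives $\nm{\eta - P^{m-1}\eta}_{L^2(K)}\leq \nm{\eta}_{L^2(K)}$, and the approximation property \eqref{eq:projprop-1} of $P^\star$ yields $\nm{\eta}_{L^2(K)}\leq Ch^{k+1}$; since $k>\frac d2$ and $h\leq 1$, this is $\leq Ch^{1+\frac d2}$, so the projection error is subdominant precisely because of the regularity threshold. For the smooth $U$-term I would use the standard Bramble--Hilbert estimate for the $L^2$ projection, $\nm{U - P^{m-1}U}_{L^2(K)}\leq Ch^{\max\{m,1\}}\,|U|_{H^{\max\{m,1\}}(K)}$ (using $P^{-1}=P^0$ to merge the cases $m=0$ and $m=1$), and bound the local seminorm by the cell volume, $|U|_{H^{\max\{m,1\}}(K)}\leq C|K|^{\hf}\leq Ch^{\frac d2}$, since the reference functions $U^{n,\ell}$ are uniformly smooth. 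Hence $\nm{U-P^{m-1}U}_{L^2(K)}\leq Ch^{\max\{m,1\}+\frac d2}\leq Ch^{1+\frac d2}$. Summing the three bounds via the triangle inequality completes the proof.
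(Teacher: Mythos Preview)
Your proof is correct and follows essentially the same approach as the paper: decompose $u_h = \xi - \eta + U$, apply the triangle inequality with the $L^2$-contraction $\nm{I - P^{m-1}}\leq 1$, and bound the three pieces using the a~priori assumption, the approximation property of $P^\star$, and a Bramble--Hilbert estimate for $U$. The only cosmetic difference is that the paper absorbs the local factor $|U|_{H^{k+1}(K)}\leq Ch^{d/2}$ into the $\eta$-bound (getting $Ch^{k+1+d/2}$ directly) rather than invoking $k>\tfrac d2$, but your route is equally valid under the standing hypotheses.
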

\begin{proof}
	We omit the superscript $(n,\ell+1)$ and denote by $w:=w^{n,\ell+1}$ with $w = u_h, \xi,\eta, U$ in this proof. Recall $u_h = \xi - \eta +U$. With $\nm{I-P^{m-1}}_{L^2(K)}\leq 1$ and the triangle inequality, we have
	\begin{equation}
		\begin{aligned}
			&\nm{u_h-P^{m-1}u_h}_{L^2(K)} = \nm{(I-P^{m-1})\left(\xi-\eta+U\right)}_{L^2(K)}\\
			\leq& \nm{\xi}_{L^2(K)}+\nm{\eta}_{L^2(K)}+ \nm{(I-P^{m-1})U}_{L^2(K)}\\
			\leq& \nm{\xi}_{L^2(K)}+Ch^{\max{(1,m)}+\frac{d}{2}}
			\leq \nm{\xi}_{L^\infty(K)}h^{\frac{d}{2}}+C h^{1+\frac{d}{2}}.
		\end{aligned}
	\end{equation}
	Here we have used the approximation property of $P^\star$ and $P^{m-1}$ in $L^2(K)$ in the second last inequality. The proof is then completed after applying the a priori assumption \eqref{eq:apriori}. 
\end{proof}

\begin{lemma}\label{lem:est-us-uh}
	If $\nm{\xi^{n,\ell+1}}_{L^\infty(\Omega)}\leq h$, then 
	$\nm{u_\sig^{n,\ell+1}-u_h^{n,\ell+1}}  \leq C\tau \left(\nm{\xi^{n,\ell+1}} + h^{k+1}\right).$
\end{lemma}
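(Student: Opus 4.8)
The plan is to exploit the exact modal solvability of the OE step together with the weak form of the damping ODE \eqref{eq:oedg-uni3}. First I would write the difference as a pseudo-time integral,
\[
u_\sig^{n,\ell+1} - u_h^{n,\ell+1} = \int_0^\tau \frac{\mathrm{d}}{\mathrm{d}\hat t}u_\sig(\cdot,\hat t)\,\mathrm{d}\hat t,
\]
and estimate the integrand cellwise. Since $\frac{\mathrm{d}}{\mathrm{d}\hat t}u_\sig \in \mathbb P^k(K)$, differentiating $\ipK{u_\sig}{v}$ in $\hat t$, choosing $v = \frac{\mathrm{d}}{\mathrm{d}\hat t}u_\sig$ in \eqref{eq:oedg-uni3}, and applying the Cauchy--Schwarz inequality (using $\delta_K^m\geq 0$) yields
\[
\nm{\tfrac{\mathrm{d}}{\mathrm{d}\hat t}u_\sig}_{L^2(K)} \leq \sum_{m=0}^k \delta_K^m \,\nm{u_\sig - P^{m-1}u_\sig}_{L^2(K)}.
\]

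Next I would control the projection defects $\nm{u_\sig(\cdot,\hat t) - P^{m-1}u_\sig(\cdot,\hat t)}_{L^2(K)}$ \emph{uniformly} in $\hat t \in [0,\tau]$. Because the exact solver \eqref{usig} damps each modal coefficient by a factor in $(0,1]$ (as all $\delta_K^m\geq 0$), and since $u_\sig - P^{m-1}u_\sig$ retains exactly the modes of degree $\geq \max(m,1)$, orthogonality of the basis gives the modewise contraction
\[
\nm{u_\sig(\cdot,\hat t)-P^{m-1}u_\sig(\cdot,\hat t)}_{L^2(K)} \leq \nm{u_h^{n,\ell+1}-P^{m-1}u_h^{n,\ell+1}}_{L^2(K)}.
\]
The right-hand side is precisely the quantity bounded by $Ch^{1+\frac{d}{2}}$ in \cref{lem:perpP} under the hypothesis $\nm{\xi^{n,\ell+1}}_{L^\infty(\Omega)}\leq h$. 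Combining the three displays and integrating over $\hat t\in[0,\tau]$ produces the cellwise bound $\nm{u_\sig^{n,\ell+1}-u_h^{n,\ell+1}}_{L^2(K)} \leq C\tau h^{1+\frac{d}{2}}\sum_{m=0}^k \delta_K^m$.

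Then I would insert the coefficient estimate of \cref{lem:delta}. Since the sum over $m$ has a fixed number $k+1$ of terms and the powers of $h$ cancel exactly ($h^{1+\frac{d}{2}}\cdot h^{-1-\frac{d}{2}} = 1$ and $h^{1+\frac{d}{2}}\cdot h^{k-\frac{d}{2}} = h^{k+1}$), this gives
\[
\nm{u_\sig^{n,\ell+1}-u_h^{n,\ell+1}}_{L^2(K)} \leq C\tau\left(\nm{\xi^{n,\ell+1}}_{L^2(K)} + h^{k+1}\nm{U^{n,\ell+1}}_{H^{k+1}(K)}\right).
\]
Finally, I would square, sum over all cells $K$, use $(a+b)^2\leq 2a^2+2b^2$ and the uniform smoothness bound $\nm{U^{n,\ell+1}}_{H^{k+1}(\Omega)}\leq C$, and take a square root to reach the claimed estimate $\nm{u_\sig^{n,\ell+1}-u_h^{n,\ell+1}} \leq C\tau\left(\nm{\xi^{n,\ell+1}} + h^{k+1}\right)$.

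The main obstacle is the uniform-in-$\hat t$ control in the second paragraph: \cref{lem:perpP} directly bounds only the projection defect of the initial data $u_h^{n,\ell+1}$, whereas the differential estimate needs the same bound along the \emph{entire} pseudo-time trajectory. Resolving this hinges on the observation that the OE step acts as a modewise contraction, so the defect can only shrink; this is where the exact solvability and the nonnegativity of the damping coefficients $\delta_K^m$ are essential, and it is exactly the ingredient with no analogue in the semidiscrete OFDG analysis of \cite{lu2021oscillation}.
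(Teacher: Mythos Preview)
Your proof is correct, but it takes a slightly different route from the paper's. The paper rewrites the damping equation with $u_\sig-u_h$ as the unknown (using that $u_h$ is independent of $\hat t$), obtaining
\[
\ipK{(u_\sig-u_h)_{\hat t}}{v}+\sum_{m=0}^k\delta_K^m\ipK{(u_\sig-u_h)-P^{m-1}(u_\sig-u_h)}{v}=-\sum_{m=0}^k\delta_K^m\ipK{u_h-P^{m-1}u_h}{v},
\]
and then tests with $v=u_\sig-u_h$. The second term on the left is nonnegative by orthogonality and $\delta_K^m\geq 0$, so it is dropped; Cauchy--Schwarz on the right yields $\tfrac{\mathrm{d}}{\mathrm{d}\hat t}\nm{u_\sig-u_h}_{L^2(K)}\leq\sum_m\delta_K^m\nm{u_h-P^{m-1}u_h}_{L^2(K)}$, whose right-hand side is \emph{automatically} independent of $\hat t$. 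Thus the paper never needs your modewise-contraction argument or any appeal to the exact solver. Your choice $v=\partial_{\hat t}u_\sig$ instead produces the projection defect of $u_\sig(\cdot,\hat t)$ on the right, forcing you to add the contraction step to recover $\hat t$-independence. Both approaches land on the same cellwise inequality and finish identically via \cref{lem:delta} and \cref{lem:perpP}; the paper's is simply one step shorter, and in particular shows that the ``main obstacle'' you flag is an artifact of your test-function choice rather than an intrinsic difficulty.
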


\begin{proof}	We omit the superscripts $(n,\ell+1)$ and denote by $w:=w^{n,\ell+1}$ with $w = u_h, u_\sigma, \xi$ in this section. 	Note that $u_h$ is independent of $\hat{t}$. The OE procedure \eqref{eq:oedg-uni3}, with the unknown $u_\sig = u_\sig(x,\hat{t})$, can be written as 
	\begin{equation}\label{eq:filter2}
		\ipK{(u_\sig-u_h)_{\hat{t}}}{v} + \sum_{m = 0}^k\delta_K^m\ipK{u_\sig -u_h- P^{m-1}\left(u_\sig-u_h\right)}{v} = -\sum_{m = 0}^k \delta_K^m\ipK{u_h- P^{m-1}u_h}{v}.
	\end{equation}
	Since $\ipK{v-P^{m-1}v}{P^{m-1}v} = 0$, we can take $v = u_\sig -u_h$ to obtain
	\begin{equation}\label{eq:filter3}
		\begin{aligned}
			\hf \dth \nm{u_\sig-u_h}_{L^2(K)}^2 + \sum_{m = 0}^k \delta_K^m\nm{u_\sig -u_h- P^{m-1}\left(u_\sig-u_h\right)}_{L^2(K)}^2 \\
			= -\sum_{m = 0}^k \delta_K^m\ipK{u_h- P^{m-1}u_h}{u_\sig-u_h}.
		\end{aligned}
	\end{equation}
	Drop the second term on the left and apply the Cauchy--Schwarz inequality on the right. After canceling $\nm{u_\sig-u_h}_{L^2(K)}$ on both sides, it gives
	\begin{equation}\label{eq:us-uh}
		\begin{aligned}
			\dth \nm{u_\sig-u_h}_{L^2(K)} \leq 
			\sum_{m = 0}^k |\delta_K^m|\nm{u_h- P^{m-1}u_h}_{L^2(K)}.
		\end{aligned}
	\end{equation}

	After applying \cref{lem:perpP} and \cref{lem:delta} in \eqref{eq:us-uh}, we get
	\begin{equation}
		\dth \nm{u_\sig-u_h}_{L^2(K)} \leq C\left(\nm{\xi}_{L^2(K)}+h^{k+1}\nm{U}_{H^{k+1}(K)}\right).
	\end{equation}
	Note the right hand is independent of $\hat{t}$. After integrating the inequality from $\hat{t} = 0$ to $\hat{t} = \tau$ and using the initial condition $u_\sig(\cdot,0) = u_h(\cdot)$, we get
	\begin{equation}\label{eq:lemYK}
		\nm{u_\sig-u_h}_{L^2(K)}  \leq C\tau \left(\nm{\xi}_{L^2(K)}+ h^{k+1}\nm{U}_{H^{k+1}(K)}\right).
	\end{equation}  
	The proof of \cref{lem:est-us-uh} can be completed after taking the square on both sides of \eqref{eq:lemYK}, summing over all $K$, taking a square root, and finally applying the inequality $\sqrt{a^2+b^2}\leq |a|+|b|$. 
\end{proof}

\subsubsection{Proof of \cref{lem:Y}}\label{sec:lem:Y}
Note that $\xi_\sig^{n,\ell+1} = \xi^{n,\ell+1}+(u_\sig^{n,\ell+1}-u_h^{n,\ell+1})$. Apply the triangle inequality, \cref{lem:est-us-uh}, and then \cref{lem:xi-xis}. It gives
\begin{equation}
	\begin{aligned}
		\nm{\xi_\sig^{n,\ell+1}}\leq& \nm{\xi^{n,\ell+1}}+\nm{u_\sig^{n,\ell+1}-u_h^{n,\ell+1}}\leq C\nm{\xi^{n,\ell+1}}+C\tau h^{k+1}\\
		\leq& C\sum_{0\leq m\leq \ell}\nm{\xi_\sig^{n,m}}+C\tau\left(h^{k+1}+\tau^r\right).
	\end{aligned}
\end{equation}
Hence by induction, we have 
\begin{equation}\label{eq:estxis}
	\nm{\xi_\sig^{n,m}}\leq C\nm{\xi_\sig^n}+C\tau \left(h^{k+1}+\tau^r\right)\quad \forall 0\leq m\leq \ell+1.
\end{equation} Substituting \eqref{eq:estxis} into \cref{lem:xi-xis} yields
\begin{equation}
	\nm{\xi^{n,\ell+1}}\leq C\nm{\xi_\sig^n}+C\tau \left(h^{k+1}+\tau^r\right).
\end{equation}
Together with \cref{lem:est-us-uh}, one can get
\begin{equation}
	\nm{\cY^{n,\ell}} = \tau^{-1}\nm{u_\sig^{n,\ell+1}-u_h^{n,\ell+1}} \leq C \left(\nm{\xi_\sig^{n}} + h^{k+1}+\tau^r\right).
\end{equation}

\subsubsection{Proof of \cref{lem:ind1}}\label{sec:lem:ind1}
For convenience, we define $\xi^{n,0} = \xi_\sig^n$. For $\ell = 0$, note we automatically get $\nm{\xi_\sig^{n,0}} = \nm{\xi_\sig^{n}}\leq C_{\star\star} \left(h^{k+1}+\tau^r\right)$. Since $\tau \leq C_{\mathrm{CFL}}h^{\kappa}$, by \eqref{eq:normequiv}, we have
\begin{equation}
	\nm{\xi^{n,0}}_{L^\infty(\Omega)}\leq Ch^{-\frac{d}{2}}\nm{\xi^{n,0}} = Ch^{-\frac{d}{2}}\nm{\xi_\sig^{n}} \leq C_0h^{\min(k+1-\frac{d}{2},r\kappa-\frac{d}{2})}\leq h
\end{equation}
if $h\leq C_0^{-1/(\min(k,r-1)-d/2)}$. Hence the lemma holds for $\ell = 0$. 

Now assume the lemma is true for all $m\leq \ell$ under the assumption $h\leq \min_{0\leq m\leq \ell} C_{m}^{-1/(\min(k,r-1)-d/2)}$. By \cref{lem:xi-xis}  and the induction hypothesis, we have 
\begin{equation}\label{eq:estxiind}
	\nm{\xi^{n,\ell+1}}\leq C\left(h^{k+1}+\tau^r\right).
\end{equation}
Thus using \eqref{eq:normequiv}, we have 
\begin{equation}\label{eq:estxi-uhind}
	\nm{\xi^{n,\ell+1}}_{L^\infty(\Omega)}\leq Ch^{-\frac{d}{2}}\nm{\xi^{n,\ell+1}}\leq C_{\ell+1}h^{\min(k+1-\frac{d}{2},r\kappa-\frac{d}{2})}\leq h
\end{equation}
if $h\leq C_{\ell+1}^{-1/(\min(k,r-1)-d/2)}$. Hence with this extra constraint, the assumption in \cref{lem:est-us-uh} becomes valid and the lemma gives
\begin{equation}\label{eq:estus-uhind}
	\nm{u_\sig^{n,\ell+1}-u_h^{n,\ell+1}}  \leq C\tau \left( h^{k+1} +\tau^r\right).
\end{equation}  
Applying the triangle inequality, \eqref{eq:estxiind}, and \eqref{eq:estus-uhind}, it yields
\begin{equation}\label{eq:estxisind}
	\nm{\xi_\sig^{n,\ell+1}}\leq \nm{\xi^{n,\ell+1}} +\nm{u_\sig^{n,\ell+1}-u_h^{n,\ell+1}}\leq C(h^{k+1}+\tau^r).
\end{equation}
With \eqref{eq:estxi-uhind} and \eqref{eq:estxisind}, the lemma is proven for $m = \ell+1$ with $h\leq \min_{0\leq m\leq \ell+1} C_{m}^{-1/(\min(k,r-1)-d/2)}$. The proof is completed by induction.

So far, all lemmas in \Cref{sec:proofmain} are proven, and the proof of \Cref{thm:err} is now complete.

\section{Numerical tests}\label{sec:num}

This section presents extensive 1D and 2D benchmark 
numerical examples to validate the theoretical analysis and to 
 demonstrate the accuracy, effectiveness, and robustness of the proposed OEDG method. 
The examples encompass four models of hyperbolic conservation laws: the linear advection equation, inviscid Burgers' equation, Lighthill–-Whitham-–Richards traffic flow model, and compressible Euler equations.   
Unless stated otherwise,   
for smooth problems, we employ a $(k+1)$th-order explicit RK time discretization for the $\mathbb P^k$-based OEDG method to validate the optimal convergence rates; for problems involving discontinuities, we typically use the classic third-order strong-stability-preserving explicit RK time discretization.   
The OEDG method remains stable under the normal CFL condition, thanks to the exact solver for the OE step. 
Consequently, we set the time stepsize as $\Delta t= C_{\mathrm{CFL}} h/\beta$ for 1D problems, where $\beta$ represents the maximum wave speed, and the CFL number $C_{\mathrm{CFL}} = \frac{1}{2k+1}$ for the $\mathbb P^k$-based OEDG method. 
Analogously, for 2D problems on uniform rectangular meshes with spatial stepsizes $h_x$ and $h_y$, we adopt $\Delta t= C_{\mathrm{CFL}} / (\beta_x/h_x+\beta_y/h_y)$.  
To demonstrate the importance of scale-invariant and evolution-invariant properties, we will compare the proposed OEDG method with the OFDG method \cite{lu2021oscillation,LiuLuShu_OFDG_system}, which is coupled with a third-order exponential RK time discretization \cite{huang2018bound} to mitigate restrictive time step constraints for discontinuous problems.  
We adopt the upwind flux for linear equations and the local {Lax--Friedrichs} flux for the others. Both OEDG and OFDG methods are implemented in C/C++ with double precision.  

\subsection{1D linear advection equation} 
We consider two examples of the advection equation on the spatial domain $\Omega = [0,1]$ with periodic boundary conditions. 

\begin{exmp}[smooth problem]\label{ex:1Dadvec1}
	The first example is used to validate the optimal convergence rates and 
	explore the superconvergence by 
	solving the equation $u_t + u_x=0$ with the initial condition $u_0(x) = \sin^2 (2\pi x)$ up to time $t=1.1$. 
	We measure the following five types of numerical errors: 
	\begin{align*}
	&e_1 := \| u-u_h \|_{L^1(\Omega)}, \qquad e_2 := \| u-u_h \|_{L^2(\Omega)}, \qquad e_3 := \| u-u_h \|_{L^\infty(\Omega)},
	\\
	&e_4 : =\left( h \sum_{j} \left| \frac{1}{h} \int_{I_j} (u - u_h) {\rm d} x  \right|^2 \right)^{\frac12},\quad e_5:= \left( h\sum_j \left| u(x_{j+\frac12}) -{u_{h,j+\frac{1}{2}}^-} \right|^2 \right)^\frac12. 
\end{align*}
	To verify the optimal convergence rates of $e_1$, $e_2$, and $e_3$, 
	we employ a $(k+1)$th-order explicit RK time discretization for the $\mathbb P^k$-based OEDG method. 
	For the investigation of superconvergence phenomena related to $e_4$ and $e_5$, we alternatively use a seventh-order RK method and follow 
	\cite{lu2021oscillation} to 
	  initialize $u_h(x,0) = P^\star u_0(x)$ by using the  
	Gauss--Radau projection $P^\star$ in \eqref{eq:gr}. 
	Table \ref{linear_accuracy} lists the numerical errors at $t=1.1$ for the OEDG method with $N_x$ uniform cells. 
	Note that on coarse meshes, the high-order damping effect dominates the numerical errors, resulting in convergence rates higher than $k+1$, a phenomenon also observed in the OFDG method \cite{lu2021oscillation}.  
	For comparison, \Cref{1Dlinear_stdDG} gives the error table of the conventional DG method under the same settings but without the OE procedure. 
	Our results demonstrate the optimal convergence rates in three different norms ($e_1$, $e_2$, and $e_3$), thereby confirming the fully discrete error analysis in \Cref{sec:err}. 
	We also observe the $(k+2)$th-order superconvergence in $e_4$ and $e_5$. 
	 The same superconvergence phenomena were also observed and proven for the semidiscrete OFDG method \cite{lu2021oscillation}, but the rigorous proof is currently lacking for the fully discrete OEDG method and will be explored in the future work. 
\end{exmp}

%
%
%

	\begin{table}[!htb]
		\centering
		\caption{Errors and convergence rates for $\mathbb P^k$-based OEDG method at different mesh resolutions.}
		
		\begin{tabular}{c|c|c|c|c|c|c|c|c|c|c|c} 
			\bottomrule[1.0pt]
			$k$& 	$N_x$ &$e_1$ &  rate & $e_2$  &  rate & $e_3$  & rate &  $e_4$  &  rate &  $e_5$  &  rate   \\
			\hline
%
		
\multirow{5}*{$1$}					
&128&1.69e-3&	-&	1.96e-3&	-&	3.62e-3&	-&	2.06e-3& -&	2.07e-3&	-\\ 
&256&2.81e-4&	2.59&	3.38e-4&	2.54&	6.14e-4&	2.56&	2.85e-4&	2.85&	2.86e-4&	2.85\\ 
&512&5.96e-5&	2.24&	6.78e-5&	2.32&	1.17e-4&	2.39&	3.72e-5&	2.94&	3.73e-5&	2.94\\ 
&1024&1.52e-5&	1.97&	1.70e-5&	1.99&	2.98e-5&	1.98&	4.73e-6&	2.97&	4.74e-6&	2.97\\ 
&2048&3.69e-6&	2.05&	4.10e-6&	2.05&	7.02e-6&	2.08&	5.97e-7&	2.99&	5.98e-7&	2.99\\ 

\hline

\multirow{5}*{$2$}				
&128&9.85e-6&	-&	1.08e-5&	-&	2.12e-5&	-&	9.93e-6&	-&	9.97e-6&	-\\ 
&256&6.53e-7&	3.91&	7.18e-7&	3.92&	1.68e-6&	3.66&	5.54e-7&	4.16&	5.56e-7&	4.17\\ 
&512&5.25e-8&	3.64&	5.85e-8&	3.62&	1.61e-7&	3.38&	3.28e-8&	4.08&	3.29e-8&	4.08\\ 
&1024&5.01e-9&	3.39&	5.68e-9&	3.36&	1.75e-8&	3.20&	2.00e-9&	4.04&	2.01e-9&	4.04\\ 
&2048&5.41e-10&	3.21&	6.23e-10&	3.19&	2.05e-9&	3.09&	1.23e-10&	4.02&	1.24e-10&	4.02\\ 
\hline
\multirow{4}*{$3$}								
&128&9.10e-8&	-&	1.02e-7&	-&	2.00e-7&	-&	1.04e-7&	-&	1.05e-7&	-\\ 
&256&2.91e-9&	4.97&	3.27e-9&	4.97&	7.63e-9&	4.71&	3.30e-9&	4.98&	3.31e-9&	4.98\\ 
&512&9.45e-11&	4.94&	1.09e-10&	4.91&	3.45e-10&	4.47&	1.04e-10&	4.99&	1.04e-10&	4.99\\ 
&1024&3.33e-12&	4.83&	4.02e-12&	4.76&	1.73e-11&	4.31&	3.26e-12&	5.00&	3.28e-12&	4.99\\ 		
		
				\toprule[1.0pt]
			
		\end{tabular}
		\label{linear_accuracy}
	\end{table}

		\begin{table}[!htb]
			\centering
	\caption{Errors and convergence rates for $\mathbb P^k$-based conventional RKDG method without OE step.}
	\begin{center}
		\begin{tabular}{c|c|c|c|c|c|c|c} 
			\bottomrule[1.0pt]
			$k$ &	$N_x$ &$e_1$  & rate & $e_2$  & rate & $e_3$  & rate   \\
			\hline
			
			\multirow{5}*{$1$}					
&128&8.12e-4&	-&	9.02e-4&	-&	1.39e-3&	-\\ 
&256&2.04e-4&	1.99&	2.27e-4&	1.99&	3.57e-4&	1.97\\ 
&512&5.10e-5&	2.00&	5.67e-5&	2.00&	9.03e-5&	1.98\\ 
&1024&1.29e-5&	1.99&	1.43e-5&	1.99&	2.28e-5&	1.99\\ 
&2048&3.20e-6&	2.01&	3.56e-6&	2.01&	5.71e-6&	2.00\\ 

\hline

\multirow{5}*{$2$}
&128&1.93e-6&	-&	2.25e-6&	-&	7.86e-6&	-\\ 
&256&2.40e-7&	3.00&	2.81e-7&	3.00&	9.88e-7&	2.99\\ 
&512&3.00e-8&	3.00&	3.51e-8&	3.00&	1.24e-7&	3.00\\ 
&1024&3.74e-9&	3.00&	4.39e-9&	3.00&	1.55e-8&	3.00\\ 
&2048&4.68e-10&	3.00&	5.48e-10&	3.00&	1.94e-9&	3.00\\ 

\hline 
\multirow{4}*{$3$}
&128&7.36e-9&	-&	9.98e-9&	-&	5.22e-8&	-\\ 
&256&4.60e-10&	4.00&	6.24e-10&	4.00&	3.27e-9&	4.00\\ 
&512&2.88e-11&	4.00&	3.90e-11&	4.00&	2.04e-10&	4.00\\ 
&1024&1.74e-12&	4.05&	2.41e-12&	4.02&	1.29e-11&	3.99\\ 				
			
			\toprule[1.0pt]
		\end{tabular}
		\label{1Dlinear_stdDG}
	\end{center}
\end{table}

\begin{exmp}[discontinuous problems in different scales]\label{ex2} 
This example illustrates that our OEDG method is devoid of spurious oscillations and consistently displays scale invariance and evolution invariance for problems spanning different scales and wave speeds, in contrast to the OFDG method with non-scale-invariant damping \eqref{eq:LuLiuShuDamping}. 
	We examine the linear advection equation $u_t + \beta u_x=0$ 
with discontinuous initial data described by 
\begin{equation}\label{eq:u0_exp2}
		u(x,0) = \lambda u_0(x), \qquad u_0(x):=\begin{cases}
			\sin (2\pi x), \quad &  x\in[0.3,0.8],\\
			\cos (2\pi x)-0.5, \quad & {\rm{otherwise}}.\\
		\end{cases} 	
\end{equation}
\end{exmp}
The exact solution is given by $u(x,t)= \lambda u_0(x-\beta t)$. 
To assess scale invariance, we set $\beta=1$ and vary $\lambda$ within $\{1,0.01,100\}$, 
thus scaling $u$ to represent the 
 use of different units for $u$. The cell averages of the numerical solutions $u_h^\lambda/\lambda$ at $t=1.1$, obtained by using the third-order OEDG and OFDG schemes with $256$ uniform cells, are shown in \Cref{fig1:a,fig1:b}, respectively. 
As we can see, the OEDG solution consistently upholds scale invariance: $u_h^\lambda/\lambda = u_h^1$ for all $\lambda$. 
Such consistency allows the OEDG method to eliminate spurious oscillations effectively, irrespective of the units assigned to $u$. 
Conversely, the OFDG method with damping \eqref{eq:LuLiuShuDamping} lacks scale invariance, yielding varied numerical results for different 
 $\lambda$ values, as shown \Cref{fig1:b}. More specifically, one can observe 
that the OFDG method exhibits persistent spurious oscillations (overshoots or undershoots near the discontinuity) in  
the small-scale case ($\lambda = 0.01$) and displays excessive smearing in the large-scale case ($\lambda = 100$). These observations are consistent with the analyses in \Cref{thm:scale-invariant}, \Cref{thm:homogeneity}, and \Cref{rem:LuLiuShuDamping}. 

Subsequently, we investigate the evolution-invariant attribute by fixing $\lambda=1$ and varying wave speed  $\beta \in \{1,0.01,100\}$, which corresponds to the use of different units for $t$. 
The numerical solutions, $u_h^\beta$, at $t=\frac{1.1}{\beta}$, obtained using the third-order OEDG and OFDG schemes with $256$ uniform cells, are depicted in \Cref{fig1:c,fig1:d}, respectively. 
As expected, we observe that the OEDG solution exactly satisfies the evolution invariance: $u_h^\beta (t=\frac{1.1}{\beta}) = u_h^1 (t=1.1)$ for all $\beta$. 
Hence, the OEDG method performs consistently well for different wave speeds. 
In contrast, the OFDG method is not evolution-invariant,  resulting in very different numerical results for 
 different $\beta$, as shown in \Cref{fig1:d}. Specifically,  the OFDG method does not fully suppress spurious oscillations (overshoots or undershoots) for $\beta = 100$ and causes serious smearing for $\beta = 0.01$. These findings, asserted by our analyses in \Cref{sec:invariance}, 
demonstrate the pivotal roles of scale-invariant and evolution-invariant properties in
 effectively eliminating spurious oscillations, 
thereby accentuating the superiority of the proposed OEDG method. 
The results of OFDG method will be improved if our new scale-invariant and evolution-invariant damping is used instead.

\subsection{1D inviscid Burgers' equation}
This subsection considers the nonlinear Burgers' equation $u_t+(\frac{u^2}{2})_x = 0$ on the domain $\Omega = [0,2\pi]$ with periodic boundary conditions. 

\begin{exmp}[smooth problem]
	We take the initial solution as $u_0(x)= \sin (x)+0.5$ and 
	conduct the simulation up to $t=0.6$, during which the exact solution remains smooth. 
	The smoothness allows us to study the convergence order of the OEDG method for such a nonlinear equation. 
	\Cref{burgers_con} presents the numerical errors and corresponding convergence rates 
	for the  $\mathbb P^k$-based OEDG method with a $(k+1)$th-order explicit RK time discretization at different mesh resolutions. 
	We clearly observe the optimal $(k+1)$th-order convergence order for the $\mathbb P^k$-based OEDG method. 
	This finding suggests that 
	the optimal convergence rates of the OEDG method are applicable to nonlinear equations as long as the solution remains smooth, 
even though our theoretical error estimates are solely provided for linear equations.
		\begin{table}[!htb]
			\centering
			\caption{Errors and convergence rates for $\mathbb P^k$-based OEDG method for 1D Burgers' equation.}
			\begin{center}
				\begin{tabular}{c|c|c|c|c|c|c|c} 
					\bottomrule[1.0pt]
					$k$ &	$N_x$ &$L^1$ error & rate & $L^2$ error  & rate & $L^\infty$ error  & rate   \\
					\hline
%
					
\multirow{8}*{$1$} 
&64&3.09e-3&	-&	2.09e-3&	-&	4.38e-3&	-\\ 
&128&6.88e-4&	2.17&	5.00e-4&	2.06&	1.11e-3&	1.98\\ 
&256&1.66e-4&	2.05&	1.36e-4&	1.87&	3.36e-4&	1.73\\
&512&4.15e-5&	2.00&	3.54e-5&	1.95&	8.76e-5&	1.94\\ 
&1024&1.05e-5&	1.98&	9.07e-6&	1.96&	2.24e-5&	1.97\\ 
&2048&2.67e-6&	1.98&	2.30e-6&	1.98&	5.67e-6&	1.98\\ 
&4096&6.72e-7&	1.99&	5.81e-7&	1.99&	1.42e-6&	1.99\\ 
&8192&1.69e-7&	1.99&	1.46e-7&	1.99&	3.57e-7&	2.00\\ 		
\hline
\multirow{8}*{$2$} 
&64&8.76e-5&	-&	9.75e-5&	-&	3.07e-4&	-\\ 
&128&9.44e-6&	3.21&	1.06e-5&	3.20&	3.70e-5&	3.05\\ 
&256&1.12e-6&	3.08&	1.28e-6&	3.05&	4.50e-6&	3.04\\ 
&512&1.37e-7&	3.03&	1.58e-7&	3.02&	5.52e-7&	3.03\\ 
&1024&1.69e-8&	3.01&	1.97e-8&	3.01&	6.90e-8&	3.00\\ 
&2048&2.11e-9&	3.00&	2.46e-9&	3.00&	8.59e-9&	3.00\\ 	
&4096&2.64e-10&	3.00&	3.07e-10&	3.00&	1.07e-9&	3.00\\ 
&8192&3.29e-11&	3.00&	3.84e-11&	3.00&	1.34e-10&	3.00\\ 		
\hline
\multirow{5}*{$3$} 
&64&2.61e-6&	-&	3.41e-6&	-&	1.14e-5&	-\\ 
&128&1.34e-7&	4.28&	1.80e-7&	4.24&	6.02e-7&	4.24\\ 	
&256&8.10e-9&	4.05&	1.04e-8&	4.12&	3.02e-8&	4.32\\ 
&512&5.16e-10&	3.97&	6.44e-10&	4.01&	1.46e-9&	4.37\\ 
&1024&3.31e-11&	3.96&	4.08e-11&	3.98&	8.89e-11&	4.03\\ 					
					
				\toprule[1.0pt]
				\end{tabular}
				\label{burgers_con}
			\end{center}
		\end{table}
	
\end{exmp}

\subsection{Lighthill–-Whitham-–Richards traffic flow model}
This model \cite{lu2008explicit} is governed by a scalar conservation law 
\begin{equation}\label{eq:trafficflow}
	u_t + f(u)_x=0, \quad f(u) = \begin{cases}
	-0.4 u^2 + 100 u, \quad & 0\le u \le 50,
	\\
	-0.1u^2 + 15 u + 3500, \quad & 50 \le u \le 100,
	\\
	-0.024 u^2 -5.2 u + 4760, \quad  & 100 \le u \le 350,   
\end{cases}
\end{equation}
where $t$ represents time in hours, $x$ stands for distance in kilometers, and $u$ denotes the traffic density in vehicles per kilometer (veh/km). 
Given the nonhomogeneous nature of $f(u)$, the exact solution operator for equation \eqref{eq:trafficflow} is not homogeneous. However, the application of a scale-invariant damping operator remains pivotal. 
To show the importance of scale invariance and evolution invariance, we reformulate equation \eqref{eq:trafficflow} in different units, resulting in the following two equivalent forms \eqref{eq:tf2} and \eqref{eq:tf3}. 
 Using a new time variable, $\tilde t = 60 t$, measured in minutes, the governing equation \eqref{eq:trafficflow} becomes 
	\begin{equation}\label{eq:tf2}
		u_{\tilde t} + \frac{1}{60} f(u)_x=0.
	\end{equation}
 By redefining the distance variable as $\tilde x=1000x$ 
(measured in meters) and reinterpreting the density as $\tilde u = u/1000$ 
(measured in vehicles per meter, veh/m), we obtain  
\begin{equation}\label{eq:tf3}
	{\tilde u}_{t} + f(1000 \tilde u)_{\tilde x}=0.
\end{equation}



\begin{exmp}\label{ex:tf}
	Following \cite{lu2008explicit}, we simulate a traffic flow on a homogeneous freeway stretching over 20 km. 
	Initially, the density at the entrance ($x=0$) is set as 50 veh/km, 
	and an accident on the freeway creates a piecewise linear traffic density profile, as depicted in \cite[Figure 6]{lu2008explicit}. This profile features a congestion spanning 5 km, specifically from the 10 km to 15 km measured from the entrance. 
	To alleviate the congestion, the entrance is temporarily closed for 10 minutes. Following this closure, traffic resumes from the entrance at a heightened capacity density of 75 veh/km. However, 20 minutes later, the entrance flow reverts to its original density of 50 veh/km. At the freeway's exit, there is a traffic signal that operates on a cyclic pattern: 2 minutes of green light (indicating zero density) and 1 minute of red light (indicating jam density 350 veh/km). 
	We perform the simulation for a duration of one hour, by solving the three 
	equivalent governing equations \eqref{eq:trafficflow}, \eqref{eq:tf2}, and \eqref{eq:tf3}, respectively. 
	Figure \ref{trafficflow} presents the cell averages of the numerical solutions computed by the third-order  
	OEDG and OFDG methods with $800$ uniform cells in the domain $[0, 20~{\rm km}]$. The reference solution is obtained by the local Lax--Friedrichs scheme on a very fine mesh of 80,000 cells. 
	Due to the scale-invariant and evolution-invariant attributes, the OEDG schemes for all three equations yield congruent results, devoid of any nonphysical oscillations. 
	This stands in contrast to the OFDG method, which when integrated with damping \eqref{eq:LuLiuShuDamping}, produces inconsistent numerical results for these three equivalent equations in varying units. 
	Specifically,  
	the OFDG solutions result in notable smearing for equation \eqref{eq:tf2} and 
	exhibit pronounced spurious oscillations in the case of equation \eqref{eq:tf3}.



	\begin{figure}[!htp]
	\centering
	\begin{subfigure}[h]{1\linewidth}
		\centering
		\includegraphics[width=0.32\textwidth]{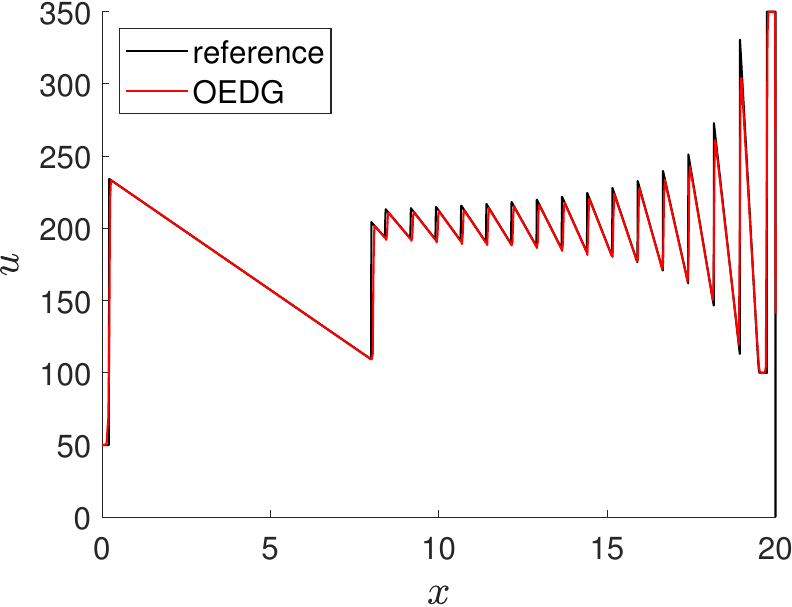} \hfill 
		\includegraphics[width=0.32\textwidth]{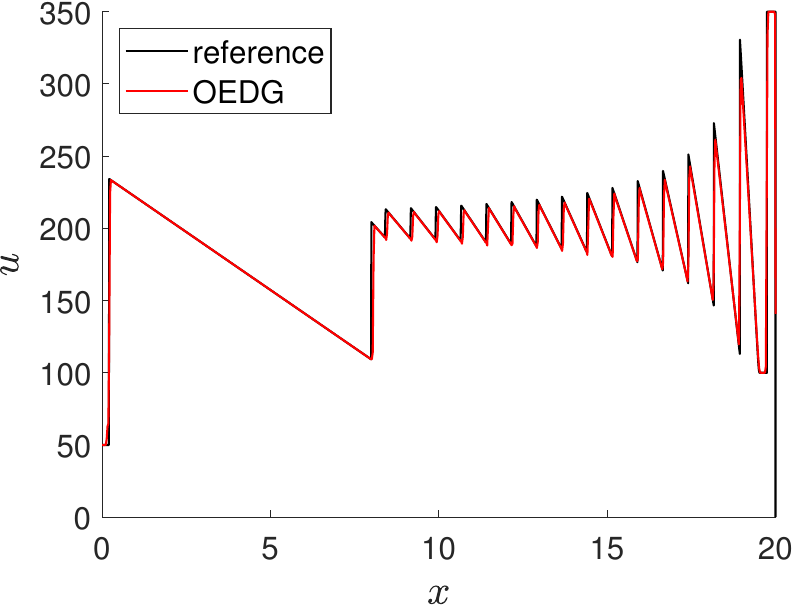} \hfill 
		\includegraphics[width=0.32\textwidth]{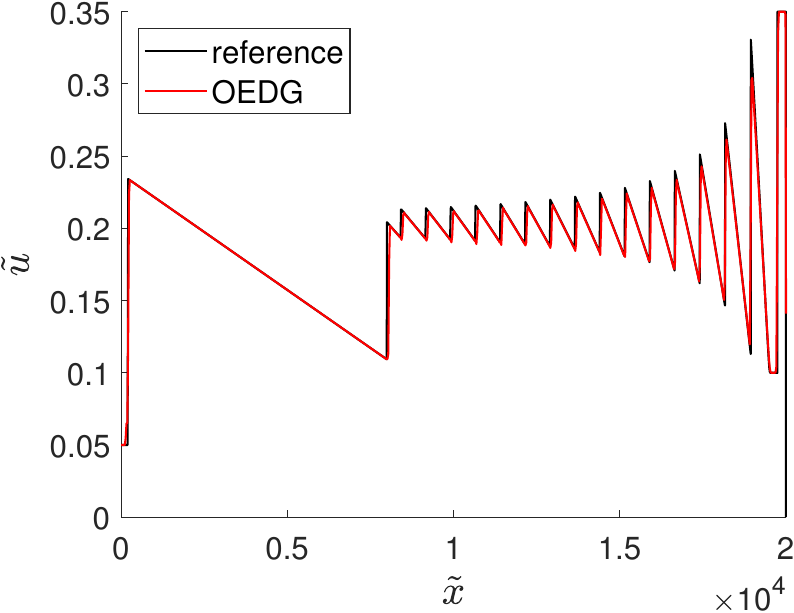}
		\subcaption{Proposed OEDG for equations \eqref{eq:trafficflow}, \eqref{eq:tf2}, and \eqref{eq:tf3}, from left to right}	
		\vspace{3mm}
	\end{subfigure}
	\begin{subfigure}[h]{1\linewidth}
		\centering
		\includegraphics[width=0.32\textwidth]{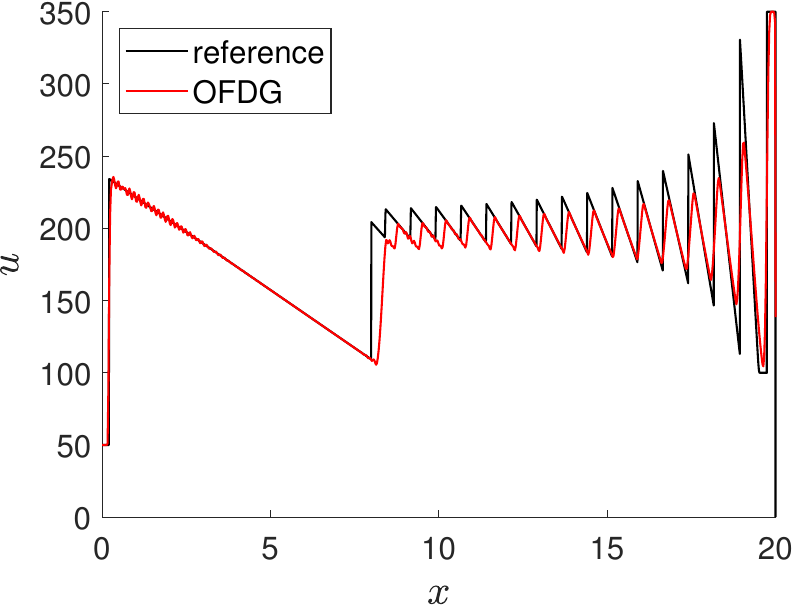}  \hfill 
		\includegraphics[width=0.32\textwidth]{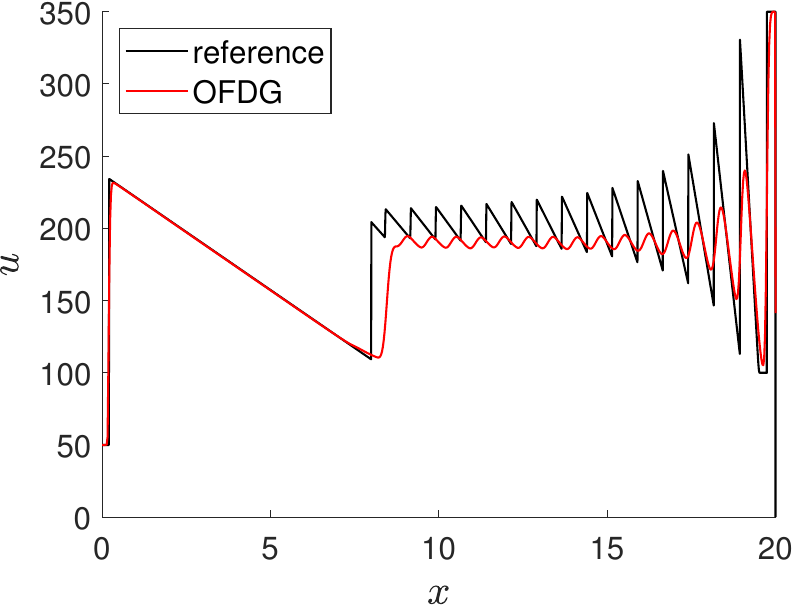} \hfill 
		\includegraphics[width=0.32\textwidth]{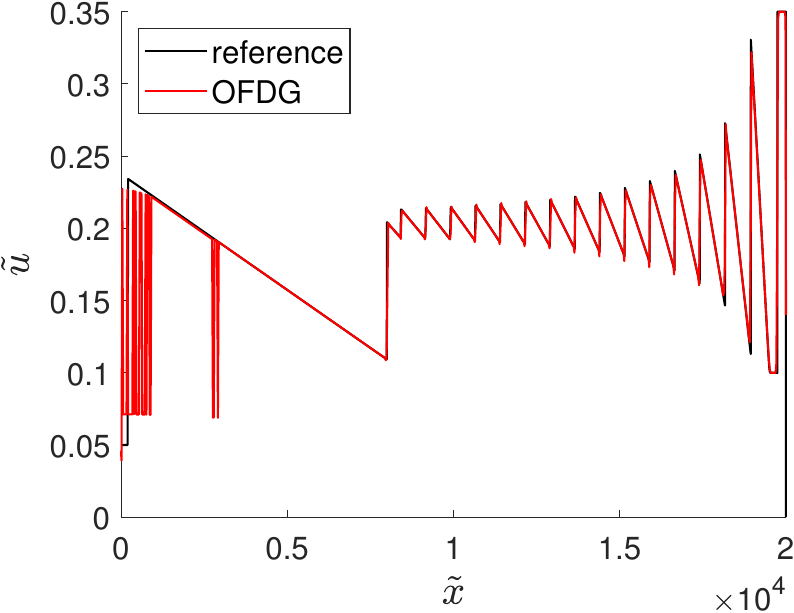}
		\subcaption{OFDG for equations \eqref{eq:trafficflow}, \eqref{eq:tf2}, and \eqref{eq:tf3}, from left to right}	
	\end{subfigure}
	\caption{Density at one hour computed by OEDG and OFDG methods solving three equivalent equations in different units for \Cref{ex:tf}. Cell averages are plotted.}
	\label{trafficflow}
\end{figure}

\end{exmp}

\subsection{1D compressible Euler equations}

This subsection considers several examples of the compressible Euler equations, 
which can be written in the form of \eqref{HCL} with ${\bf u}=(\rho, \rho v, E)^\top$ and 
${\bf f}({\bf u}) = (\rho v, \rho v^2 +p, (E+p)v)^\top $
Here $\rho$ is the density, $v$ is the velocity, $p$ is the pressure,  
$E=\frac{1}{2} \rho v^2+\frac{p}{\gamma -1}$ denotes the total energy, and 
the adiabatic index $\gamma$ is taken as $1.4$ unless otherwise stated.

\begin{exmp}[smooth problem]
	This example tests a smooth problem with the exact solution
	$$
	\rho(x,t) = 2+2 \sin^2(x-t), \qquad v(x,t) = 1, \qquad p(x,t)=2.  
	$$
	The computational domain is $\Omega = [0,2\pi]$ with periodic boundary conditions. The simulations are conducted up to $t=1.1$. 
	\Cref{1Deuler_acc} gives the numerical errors and corresponding convergence rates 
	for  $\mathbb P^k$-based OEDG method with $(k+1)$th-order explicit RK time discretization with $C_{\mathrm{CFL}} = \frac{0.95}{2k+1}$. 
	Again, 
	we observe the optimal $(k+1)$th-order convergence order for the $\mathbb P^k$-based OEDG method, as expected.  
	
	\begin{table}[!htb]
		\centering
		\caption{Errors and convergence rates for $\mathbb P^k$-based OEDG method for 1D Euler equations.} 
		\begin{center}
			
			\begin{tabular}{c|c|c|c|c|c|c|c} 
				\bottomrule[1.0pt]
			$k$	&	$N_x$ &  $L^1$ error & rate & $L^2$ error & rate & $L^\infty$ error & rate   \\
				\hline
					\multirow{7}*{$1$}

&256&8.72e-4&  2.78&  4.01e-4&  2.77&  3.29e-4&  2.71\\ 
&512&1.43e-4& 2.61&  6.60e-5&  2.61&  5.33e-5&  2.63\\ 
&1024&2.68e-5&    2.41&  1.33e-5&  2.31&  1.07e-5&  2.32\\ 
&2048&5.86e-6&    2.19&  3.10e-6&  2.11&  3.03e-6&  1.81\\ 
&4096&1.42e-6&    2.05&  7.58e-7&  2.03&  8.04e-7&  1.91\\ 
&8192&3.50e-7&    2.02&  1.89e-7&  2.01&  2.07e-7&  1.96\\ 
&16384&8.70e-8&   2.01&  4.71e-8&  2.00&  5.25e-8&  1.98\\

\hline
\multirow{4}*{$2$}

&256&5.12e-6&  4.02&  2.60e-6&  3.93&  2.39e-6&  3.81\\ 
&512&4.84e-7& 3.40&  2.46e-7&  3.40&  2.25e-7&  3.40\\ 
&1024&5.49e-8&    3.14&  2.78e-8&  3.14&  2.51e-8&  3.17\\ 
&2048&6.63e-9&    3.05&  3.36e-9&  3.05&  3.00e-9&  3.06\\ 
&4096&8.20e-10&    3.01&  4.15e-10&  3.02&  3.69e-10&  3.03\\

\hline

\multirow{4}*{$3$}& 256 & 1.40e-8 & - & 6.31e-9 & - & 4.45e-9 & - \\ 
& 512 & 4.51e-10 & 4.95 & 2.07e-10 & 4.93 & 1.60e-10 & 4.80 \\ 
& 1024 & 1.58e-11 & 4.83 & 7.59e-12 & 4.77 & 6.32e-12 & 4.66 \\ 
& 2048 & 7.57e-13 & 4.39 & 3.57e-13 & 4.41 & 4.11e-13 & 3.94 \\ 
				\toprule[1.0pt]
			\end{tabular}
		\end{center}
		\label{1Deuler_acc}
	\end{table}
\end{exmp}

\begin{exmp}[Lax problem]
	In this test, we investigate a classical Riemann problem, the Lax problem, with the scaled initial data ${\bf u}^\lambda(x,0)=\lambda {\bf u}_0(x)$, where ${\bf u}_0(x)=(\rho_0, \rho_0 v_0, E_0)^\top$ is defined by  
	\begin{equation*}
		(\rho_0,v_0,p_0)=
		\begin{cases}
			 (0.445,~0.698,~3.528),\quad & x<0,\\
			 (0.5,~0,~0.571),\quad & x>0.
		\end{cases}
	\end{equation*}
	We choose the computational domain $\Omega=[-5,5]$ and apply outflow boundary conditions. It is noteworthy that the exact solution conforms to 
	$\frac{1}{\lambda}{\bf u}^\lambda(x,t) = {\bf u}^1(x,t)$ for all $\lambda > 0$. 
	To assess the scale-invariant property, we choose three different $\lambda$ within $\{1,0.01,100\}$. 
    \Cref{Lax} presents the numerical solutions at $t=1.3$ obtained using the third-order OEDG and OFDG methods, respectively, both with $256$ uniform cells. For clarity, we plot the OFDG solution polynomials and cell averages; we also  
     zoom in some regions where spurious oscillations are produced.  
   	It is seen that the OEDG solution is scale-invariant and effectively captures the shock and contact discontinuity, devoid of noticeable spurious oscillations across all values of $\lambda$. 
   	Conversely, the OFDG method with non-scale-invariant damping \eqref{eq:LuLiuShuDamping} yields inconsistent numerical outputs across different  
   	$\lambda$ values. Specifically, \Cref{fig:Lax:OFDG:poly} clearly shows that the OFDG solution exhibits 
   	excessive smearing for $\lambda =100$ and presents spurious oscillations proximate to the 
   	shock and contact discontinuity when $\lambda = 0.01$. 
   	Some undershoots are also observed in the OFDG solution polynomials near the shock even in the case of $\lambda=1$. We would like to point out that the results of OFDG method will be improved if the scale-invariant damping \eqref{eq:1Dsigma} is employed to replace its original damping \eqref{eq:LuLiuShuDamping}.

	\begin{figure}[!htp]
		\centering
			\begin{subfigure}[h]{.99\linewidth}
		\centering
		\includegraphics[width=0.48\textwidth]{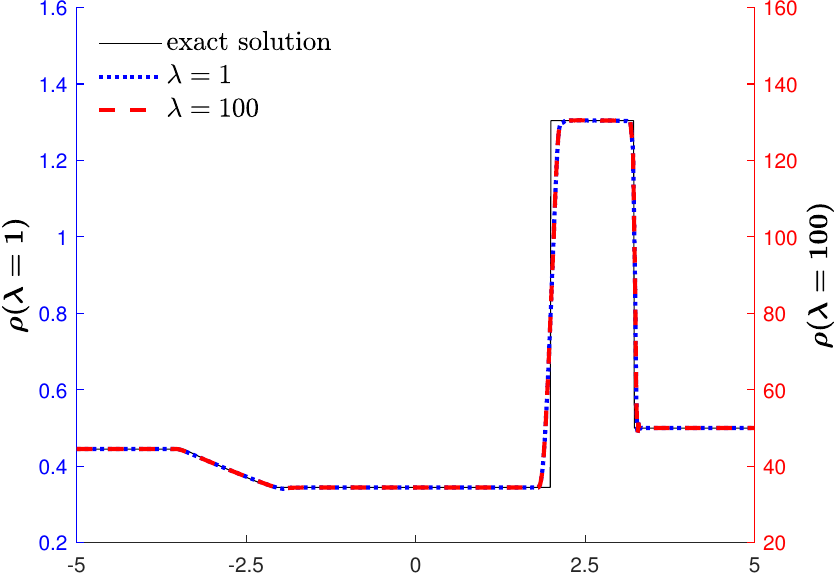} \hfill 
		\includegraphics[width=0.48\textwidth]{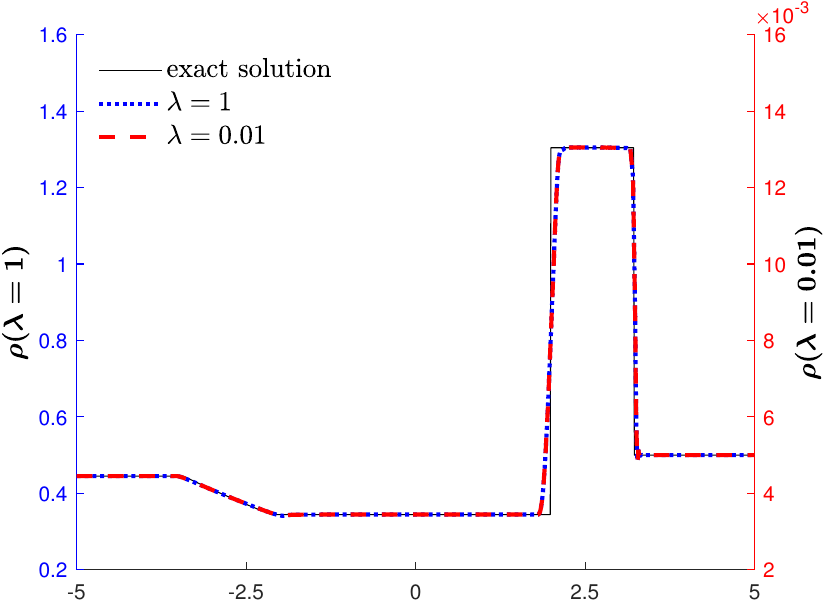}
		\subcaption{Proposed OEDG: solution polynomials are plotted}	
	\end{subfigure}
		\begin{subfigure}[h]{.99\linewidth}
			\centering
			\includegraphics[width=0.48\textwidth]{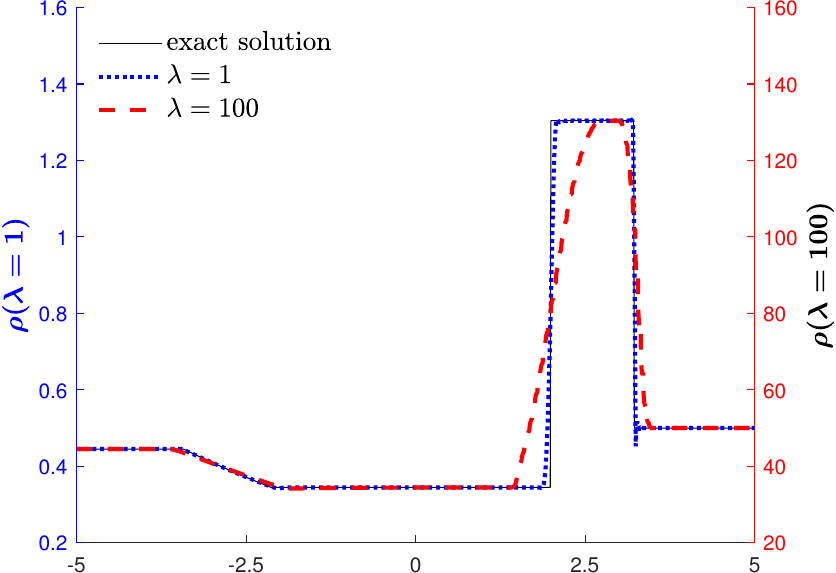}  \hfill 
			\includegraphics[width=0.48\textwidth]{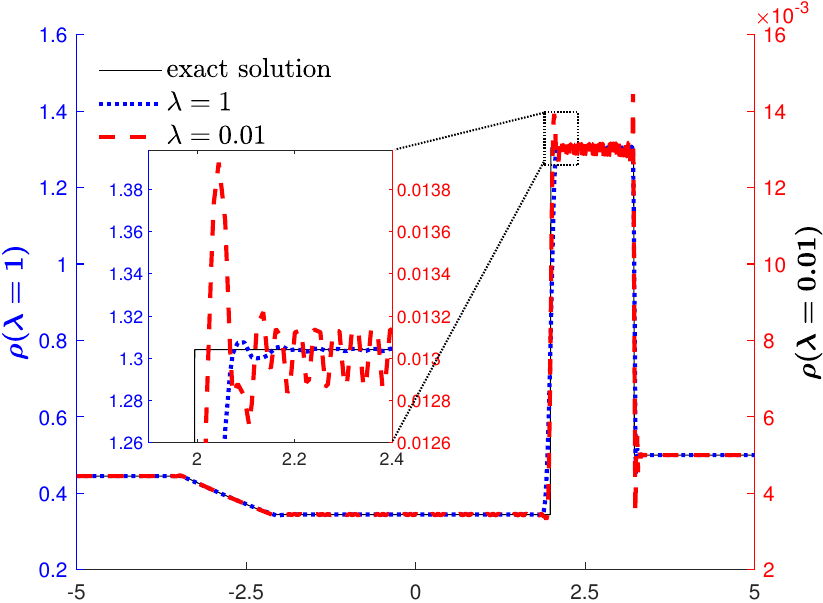}
			\subcaption{OFDG: solution polynomials are plotted}	
			\label{fig:Lax:OFDG:poly}
		\end{subfigure}
			\begin{subfigure}[h]{.99\linewidth}
		\centering
		\includegraphics[width=0.48\textwidth]{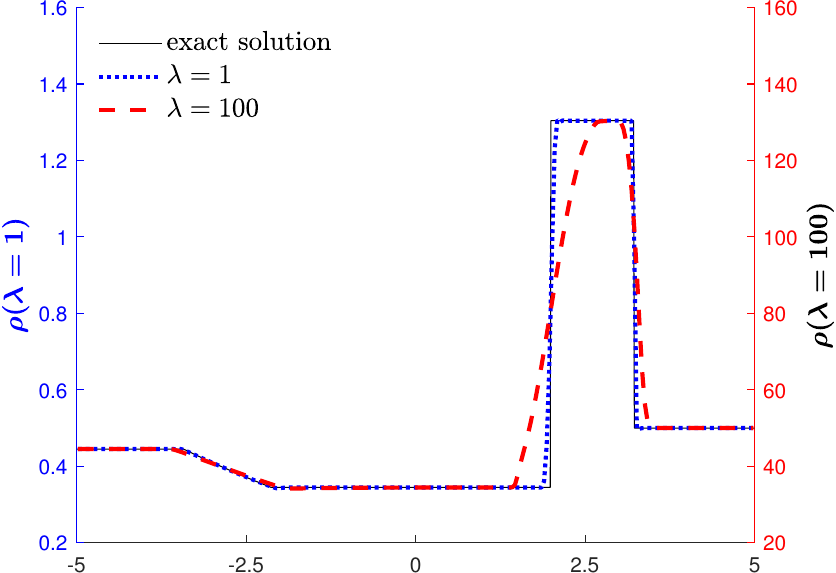}  \hfill 
		\includegraphics[width=0.48\textwidth]{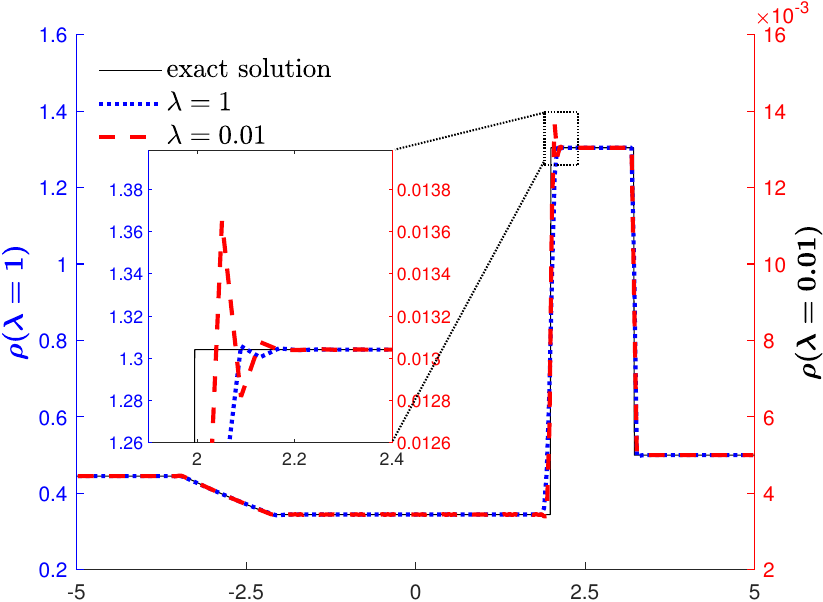}
		\subcaption{OFDG: cell averages are plotted}	
	\end{subfigure}
		\caption{Density of Lax problem at $t = 1.3$ computed by OEDG and OFDG methods. }
		\label{Lax}
	\end{figure}
	
\end{exmp}

\begin{exmp}[Woodward--Colella blast wave]
	This example simulates the interaction of two blast waves in the domain $[0,1]$ with reflective boundary conditions.  The initial conditions are defined as 
	\begin{equation*}
		(\rho_0,v_0,p_0)=
			\begin{cases}
				(1,0, 10^3),\quad & 0<x<0.1,\\
				(1,0, 10^{-2}),\quad & 0.1<x<0.9,\\
				(1,0, 10^2),\quad & 0.9<x<1.\\
			\end{cases}
	\end{equation*}
	Figure \ref{tb} displays the numerical results at $t=0.038$ obtained by using the proposed OEDG method, in comparison with the OFDG method, both on the uniform mesh of 640 cells. Here the DG solution polynomials are plotted. 
	 The reference solution is computed by using the local Lax--Friedrichs scheme on a fine mesh of 100,000 uniform cells. 
	For comparison, we also present the numerical solutions computed with or without characteristic decomposition. 
	 As seen from Figure \ref{tb}, 
	 the OFDG method exhibits much smearing without using characteristic decomposition, which is  
	 necessitated to achieve satisfactory results---a finding that aligns with observations in \cite[Figure 3.5]{LiuLuShu_OFDG_system}. 
	 In contrast, the proposed OEDG method works well either with or without the use of characteristic decomposition, and notably, it offers superior resolution compared to the OFDG method.



	\begin{figure}[!htb]
	\centering
	\begin{subfigure}[h]{.48\linewidth}
		\centering
		\includegraphics[width=.99\textwidth]{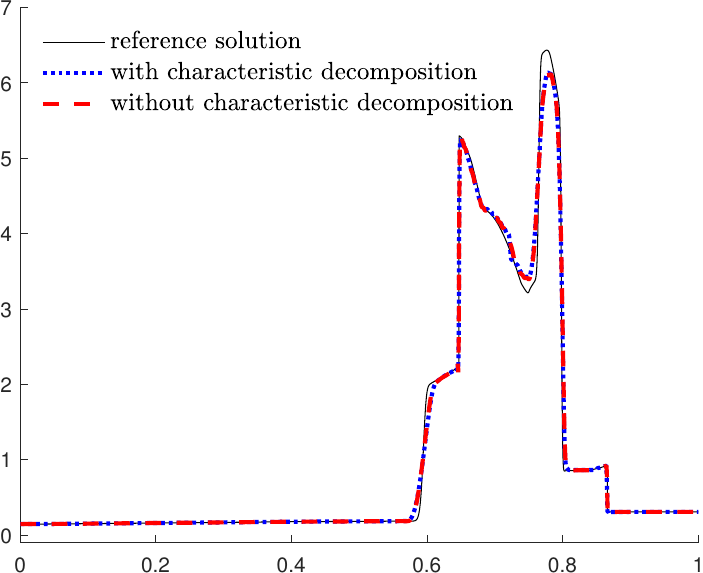}
		\subcaption{Proposed OEDG}	
	\end{subfigure}	
	\hfill 
	\begin{subfigure}[h]{.48\linewidth}
		\centering
		\includegraphics[width=.99\textwidth]{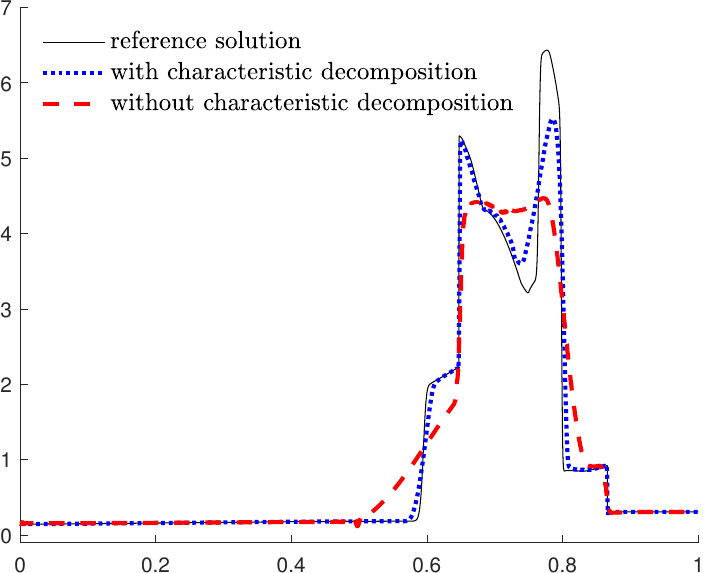}
		\subcaption{OFDG}	
	\end{subfigure}
	\caption{Density of Woodward--Colella blast wave problem at $t=0.038$ computed by OEDG and OFDG methods with or without characteristic decomposition. Solution polynomials are plotted. }
	\label{tb}
\end{figure}

\end{exmp}

\begin{exmp}[Shu--Osher problem]
	In this example, we consider the Shu--Osher problem, with the scaled initial data ${\bf u}^\lambda(x,0)=\lambda {\bf u}_0(x)$, where ${\bf u}_0(x)=(\rho_0, \rho_0 v_0, E_0)^\top$ defined by  
		\begin{equation*}
		(\rho_0,v_0,p_0)=
			\begin{cases}
				(3.857143, 2.629369, 10.33333),\quad &x<-4,\\
				(1 + 0.2{\rm sin}(5x), 0, 1),\qquad & x>-4.
			\end{cases}
	\end{equation*}
	This problem describes the interaction of sine waves and a right-moving shock. It is typically used to 
	examine the capability of high-order numerical schemes. 
	We vary $\lambda$ within $\{1,0.01,100\}$ to demonstrate the importance of ensuring scale invariance. 
	\Cref{Shu-Osher} reports the numerical results at $t=1.8$ simulated by the OEDG and OFDG methods, respectively, both using the $\mathbb P^3$ DG element with 400 uniform cells. Here the DG solution polynomials are plotted.  
	The reference solution is computed by using the local Lax--Friedrichs scheme on a very fine mesh of 300,000 uniform cells. 
	  We see that the proposed OEDG method exactly maintains the scale invariance and provides consistently good results without spurious oscillations for all $\lambda$.   
  The OFDG method with non-scale-invariant damping \eqref{eq:LuLiuShuDamping}, although giving satisfactory solution for $\lambda =1$,   
  yields some small spurious oscillations when $\lambda = 0.01$ and produces 
  too much dissipation in the case of $\lambda =100$. 
  This, again, underscores the importance of ensuring scale invariance when simulating problems across different scales.

	\begin{figure}[!htb]
		\centering
		\begin{subfigure}[h]{.99\linewidth}
			\centering
			\includegraphics[width=0.48\textwidth]{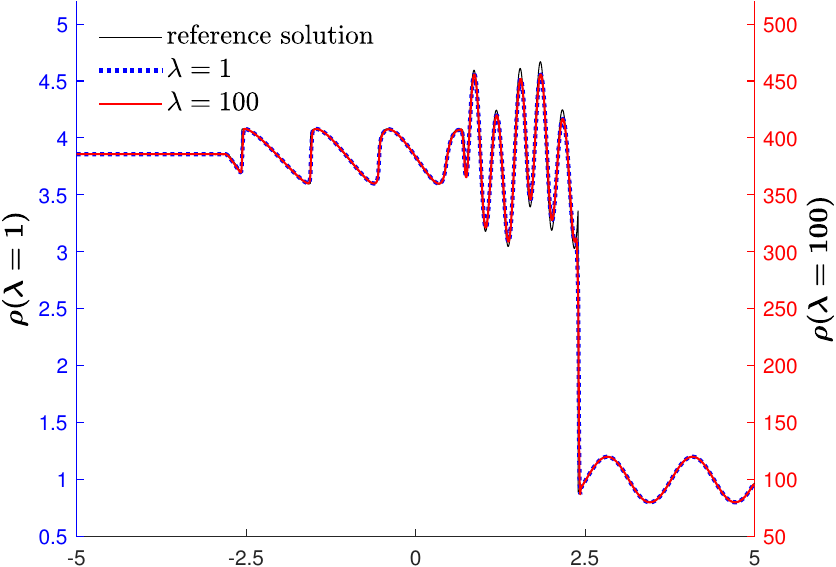} \hfill 
			\includegraphics[width=0.48\textwidth]{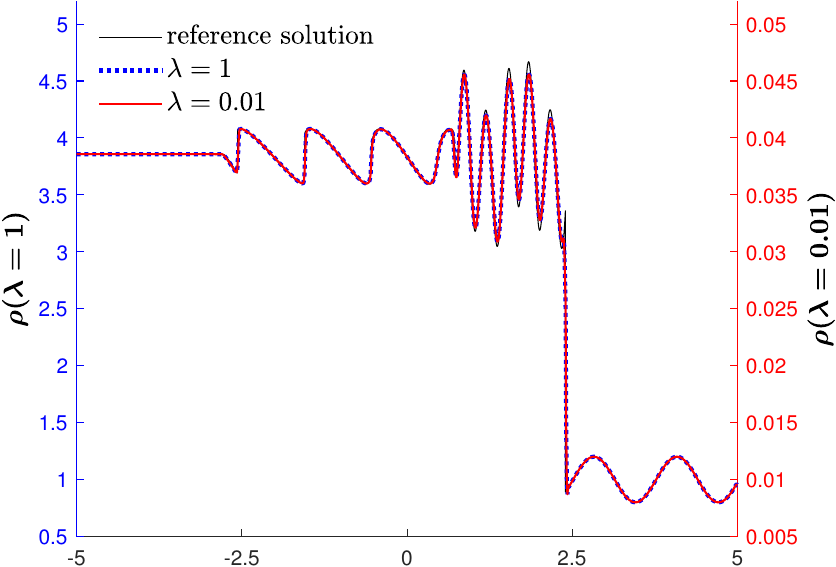}
			\subcaption{Proposed OEDG with scale-invariant damping \eqref{eq:1Dsigma}}
			\vspace{3.6mm}
		\end{subfigure}
		\begin{subfigure}[h]{.99\linewidth}
			\centering
			\includegraphics[width=0.48\textwidth]{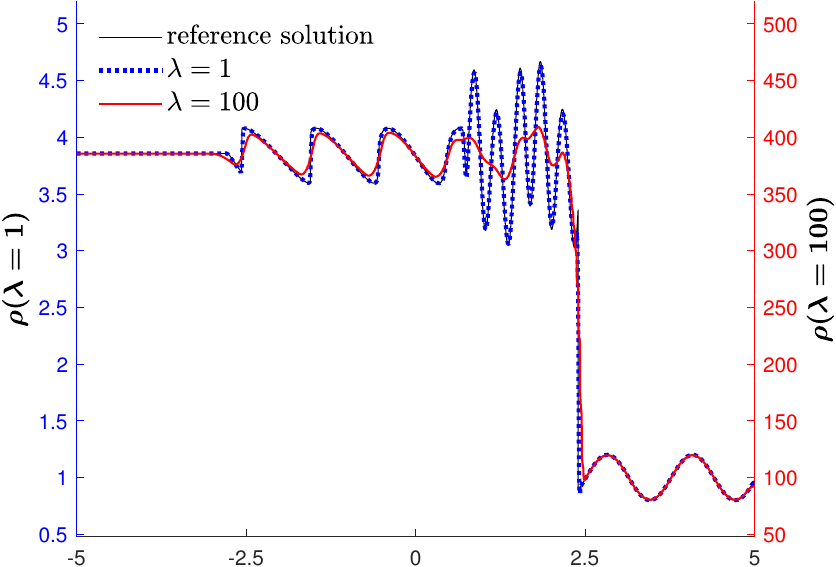} \hfill 
			\includegraphics[width=0.48\textwidth]{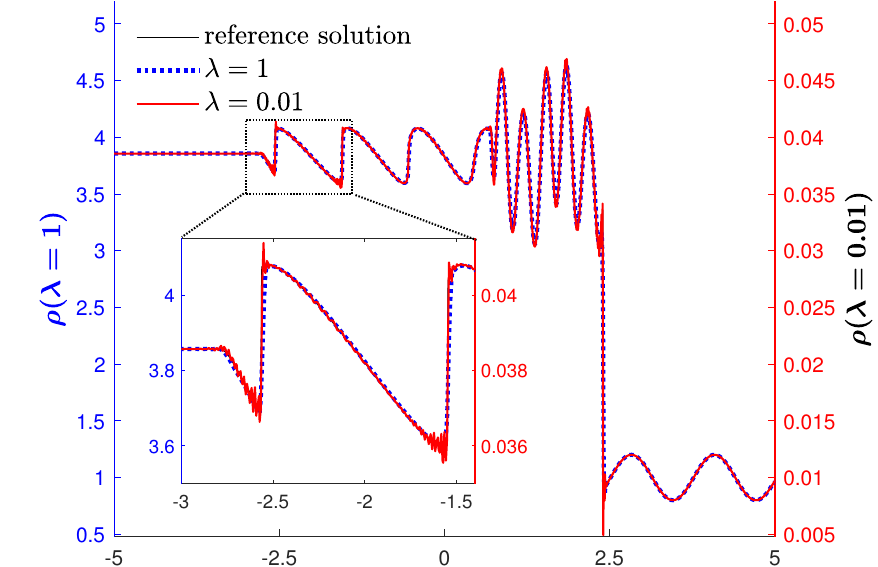}
			\subcaption{OFDG with non-scale-invariant damping \eqref{eq:LuLiuShuDamping}}
		\end{subfigure}	
		\caption{Density of Shu--Osher problem at $t = 1.8$ computed by OEDG and OFDG methods. Solution polynomials are plotted.}
		\label{Shu-Osher}
	\end{figure}
\end{exmp}

\subsection{2D linear advection equation}

In this subsection, we present two 2D examples of the advection equation $u_t+u_x+u_y = 0$ in the domain $\Omega = [-1,1]^2$ with periodic boundary conditions. 

\begin{exmp}[smooth problem] \label{ex:2DsmoothLinear}
	The first example serves as a smooth test case to validate the accuracy of the proposed 2D OEDG schemes. 
	The initial solution is $u_0(x,y) = \sin^2(\pi(x+y))$. 
	\Cref{table:test5} lists the numerical errors at $t=1.1$ and corresponding convergence orders in three different norms for the 2D $\mathbb P^k$-based OEDG method with $(k+1)$th-order RK time discretization. 
	We observe that the convergence rates surpass the theoretical rate of $k+1$. 
	This is because the high-order damping effect dominates the errors on coarse meshes. 
 However, as the mesh is refined, the convergence rates will progressively approach the anticipated theoretical rate.
 Similar phenomena were also observed for 
 the 2D OFDG method in \cite[Table 4.3]{lu2021oscillation}. 
	

		\begin{table}[!htb]
			\centering
			\begin{center}
				\caption{Errors and convergence rates for 2D $\mathbb P^k$-based OEDG method for \Cref{ex:2DsmoothLinear}.}\label{table:test5}
				\begin{tabular}{c|c|c|c|c|c|c|c} 
					\bottomrule[1.0pt]
				$k$	&	$N_x \times N_y$ &$L^1$ error& rate & $L^2$ error & rate & $L^\infty$ error &  rate   \\ \hline \multirow{6}*{$1$}
					
					& $80\times64$ &1.46e-2&- &1.80e-2&- &3.25e-2&-  \\
					& $160\times128$ &2.56e-3&2.51 &2.95e-3&2.61 &5.23e-3&2.64  \\
					& $320\times256$ &3.85e-4&2.74 &4.45e-4&2.73 &7.57e-4&2.79 \\
					& $640\times512$ &5.89e-5&2.71 &7.07e-5&2.65 &1.36e-4&2.48 \\
					& $1280\times1024$ &1.17e-5&2.33 &1.35e-5&2.39 &2.55e-5&2.41 \\
					& $2560\times2048$ &2.73e-6&2.10 &3.06e-6&2.15 &5.38e-6&2.25 	\\

					\hline \multirow{6}*{$2$}
					
					&$80\times64$&2.15e-3&- &2.42e-3&- &3.95e-3&-  \\
					&$160\times128$&8.69e-5&4.63 &9.63e-5&4.65 &1.53e-4&4.69 \\
					&$320\times256$&5.48e-6&3.99 &6.09e-6&3.98 &1.05e-5&3.87 \\
					&$640\times512$&3.53e-7&3.96 &3.94e-7&3.95 &7.80e-7&3.75 \\
					&$1280\times1024$&2.37e-8&3.90 &2.70e-8&3.87 &6.49e-8&3.59 \\
					&$2560\times2048$&1.76e-9&3.75 &2.10e-9&3.68 &6.24e-9&3.38 \\
					
				\hline \multirow{6}*{$3$}		
				&$80\times64$&1.90e-4&6.11 &2.22e-4&6.00 &3.91e-4&5.79 \\
				&$160\times128$&4.99e-6&5.25 &5.78e-6&5.27 &1.02e-5&5.26 \\
				&$320\times256$&1.60e-7&4.96 &1.87e-7&4.95 &3.83e-7&4.74 \\
				&$640\times512$&5.22e-9&4.94 &6.25e-9&4.91 &1.56e-8&4.62 \\
				&$1280\times1024$&1.84e-10&4.83 &2.36e-10&4.73 &7.65e-10&4.35 \\			
					
					\toprule[1.0pt]
				\end{tabular}
			\end{center}
		\end{table}
 
\end{exmp}

\begin{exmp}[pentagram discontinuities] \label{ex:pentagram}
	The initial solution of this example is discontinuous: 
 \begin{equation*}
		u_0(x,y)=
			\begin{cases}
				1 ,\quad& r\leq \frac{1}{8}(3+3^{\sin(5\theta)}),\\
				0,\quad & \text{otherwise},\\
			\end{cases}
		\qquad \theta =
			\begin{cases}
				{\rm arccos}(\frac{x}{r}) ,\quad & y\geq 0,\\
				2\pi-{\rm arccos}(\frac{x}{r}),\quad &y<0,\\
			\end{cases}
	\end{equation*}	
where $r = \sqrt{x^2+y^2}$. The computational domain $\Omega = [-1,1]^2$ is divided into $320 \times 320$ uniform rectangular cells. The DG solutions at $t=1.8$ obtained by using the OEDG schemes are visualized in \Cref{2D_lineardis0}. 
	We see that the numerical solutions agree well with the exact one, and the discontinuities in a 
	pentagram shape are correctly captured with high resolution and without spurious oscillations. 
 
	\begin{figure}[!htb]
	\centering

	\includegraphics[width=.32\textwidth]{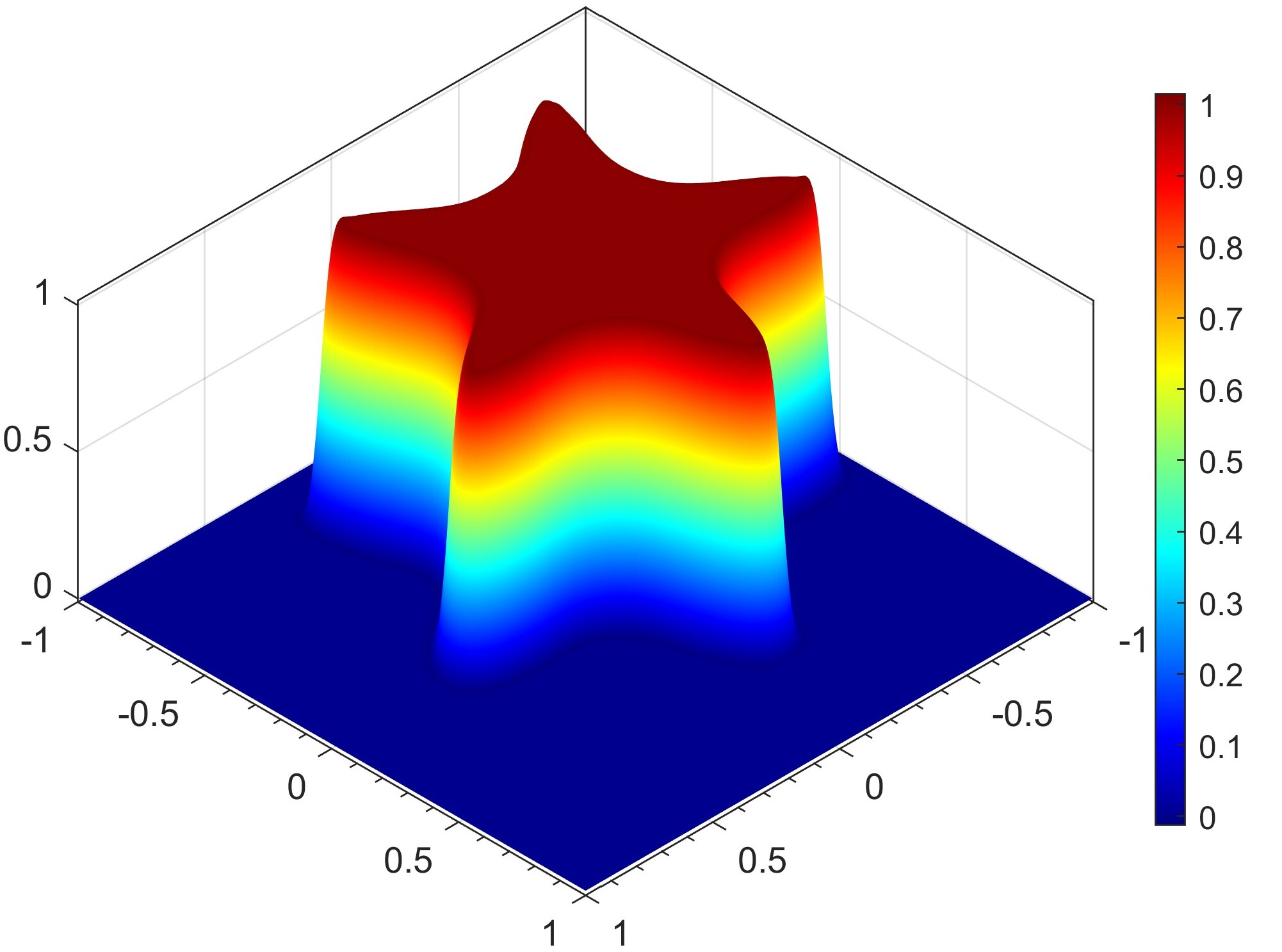}\hfill 
	\includegraphics[width=.32\textwidth]{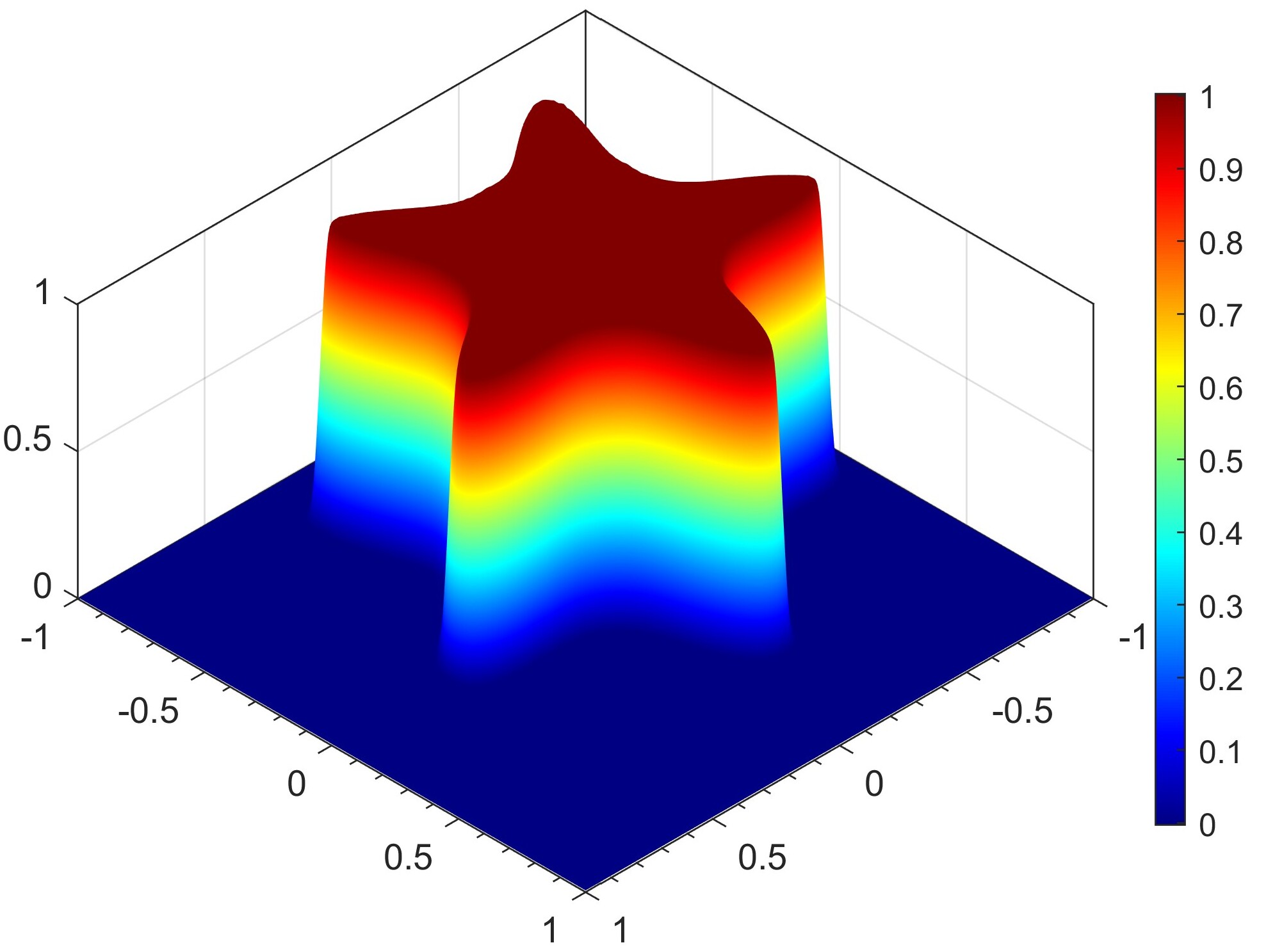}\hfill 
	\includegraphics[width=.32\textwidth]{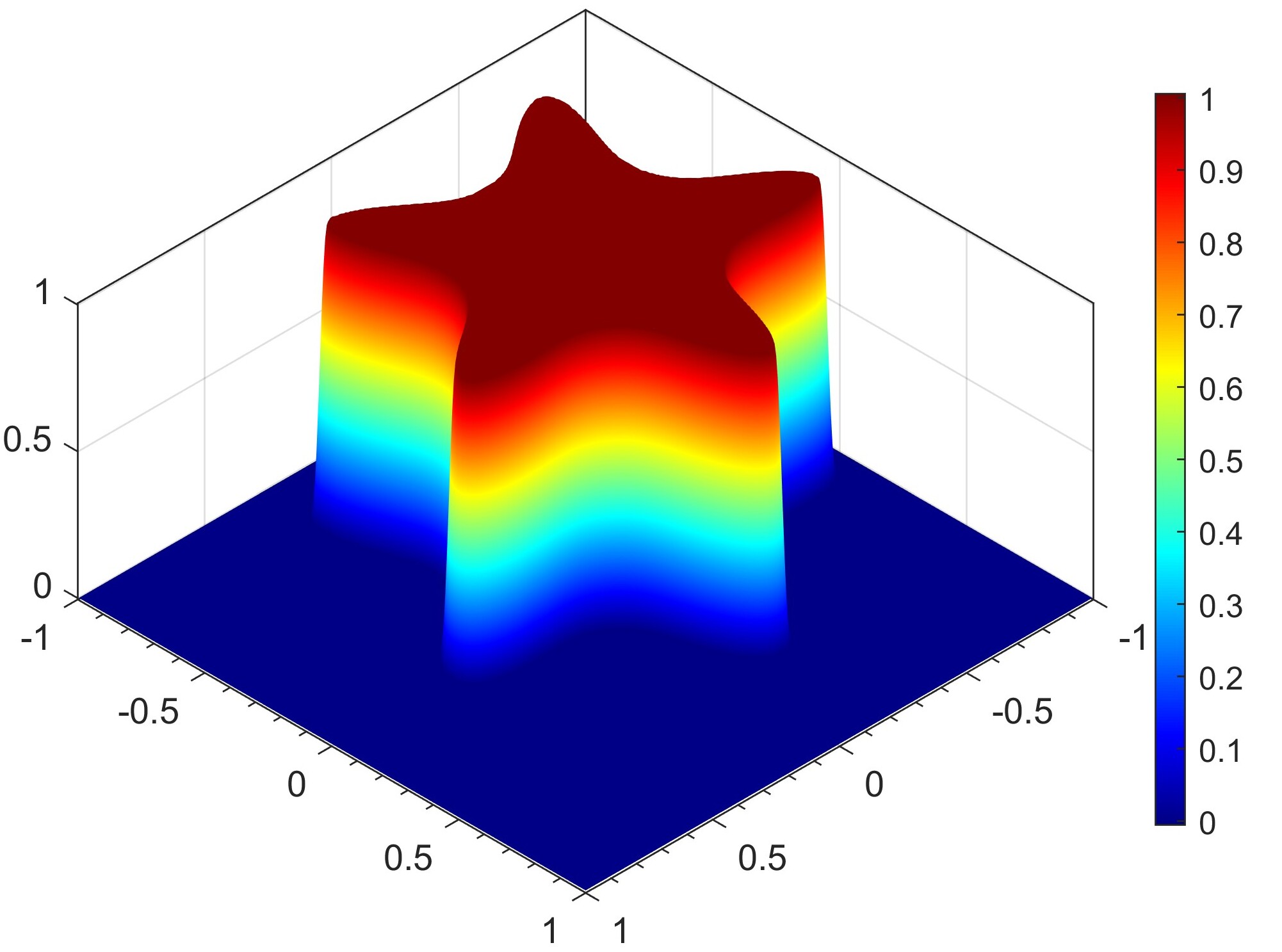}

	\includegraphics[width=.32\textwidth]{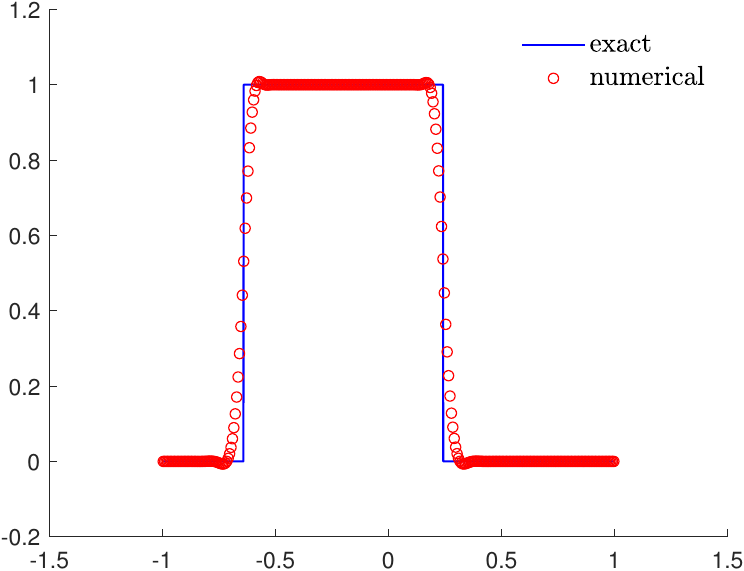}\hfill 
	\includegraphics[width=.32\textwidth]{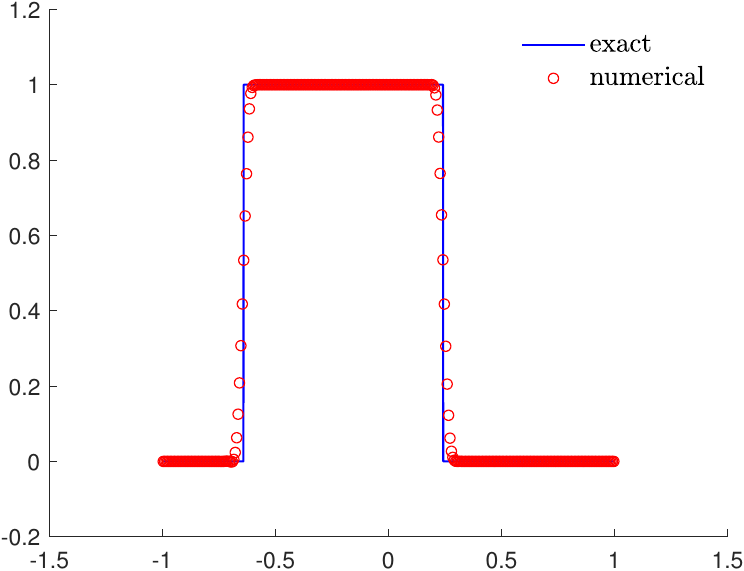}\hfill 
	\includegraphics[width=.32\textwidth]{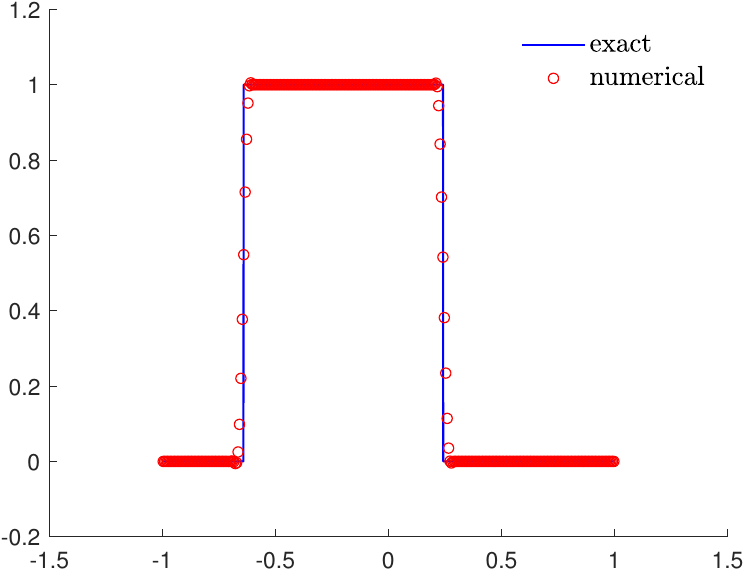} 
	
	\caption{OEDG solutions (top) at $t=1.8$ and their cut along $y=-0.25$ (bottom) for \Cref{ex:pentagram}. From left to right: $\mathbb P^1$, $\mathbb P^2$, and $\mathbb P^3$ elements. 
		}
	\label{2D_lineardis0}
\end{figure}


\end{exmp}
\subsection{2D inviscid Burgers' equation}
In this subsection, we consider the 2D Burgers' equation $u_t+(\frac{u^2}{2})_x+(\frac{u^2}{2})_y=0$ in the domain $\Omega =[0,1]^2$. 

\begin{exmp}[2D Riemann problem]\label{ex:2DburgersRP}
	This is a 2D Riemann problem with the initial data given by 
		\begin{equation*}
		u_0(x,y)=
			\begin{cases}
				0.5 ,\quad& x<0.5,y<0.5,\\
				0.8,\quad &x>0.5,y<0.5,\\
				-1,\quad &x>0.5,y>0.5,\\
				-0.2,\quad &x<0.5,y>0.5.\\
			\end{cases}
	\end{equation*}
	The computational domain $\Omega$ is partitioned into $256 \times 256$ uniform rectangular cells. Outflow boundary conditions are imposed on all the edges of $\Omega$. 
	\Cref{2DburgersRie} presents the DG solutions at $t=0.5$ computed using the OEDG schemes with $\mathbb P^1$, $\mathbb P^2$, and $\mathbb P^3$ elements, respectively. 
	It is worth noting that the numerical solutions exhibit no spurious oscillations. 
 

	\begin{figure}[!htb]
		\centering
		\includegraphics[width=.32\textwidth]{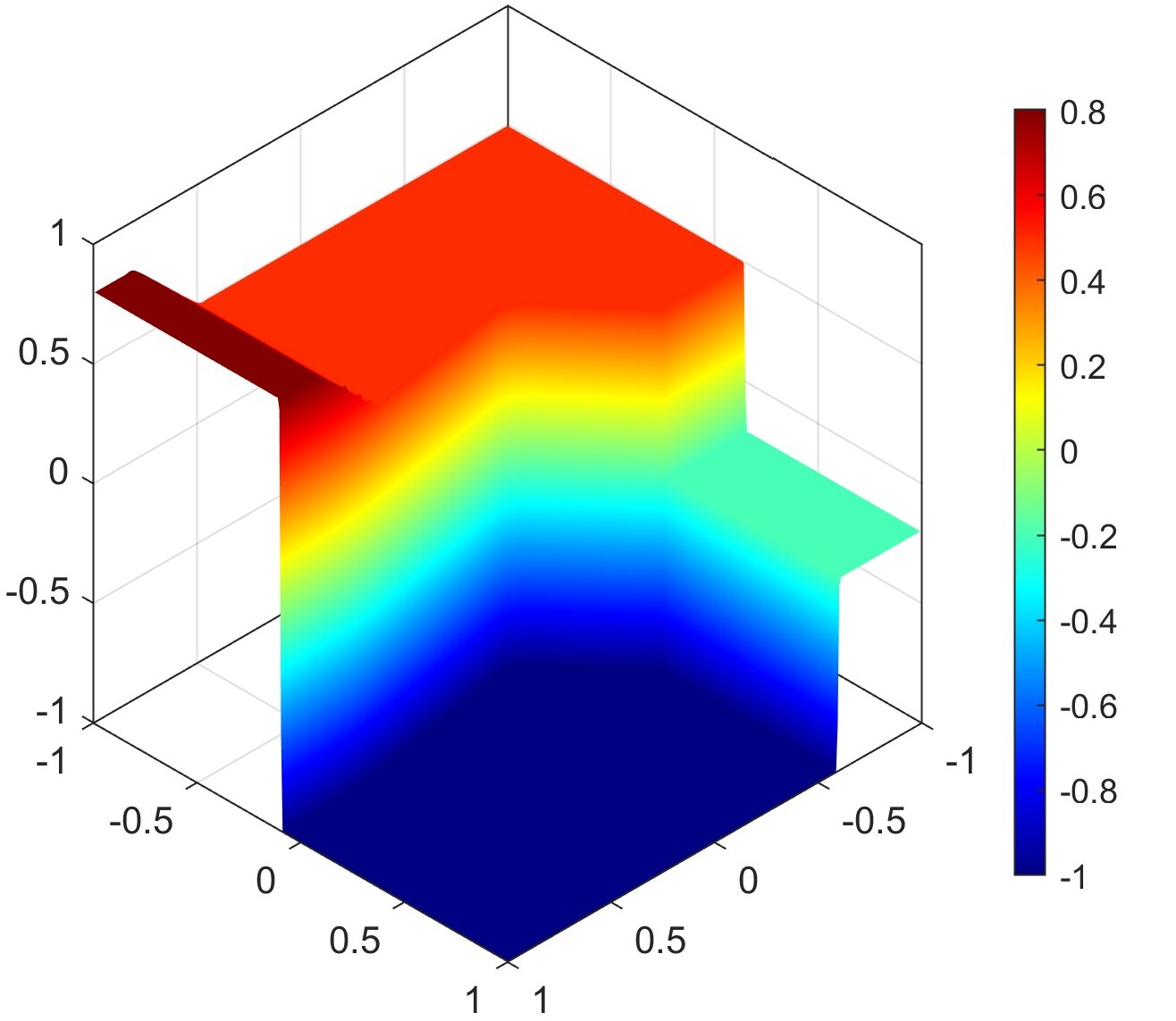}\hfill 
		\includegraphics[width=.32\textwidth]{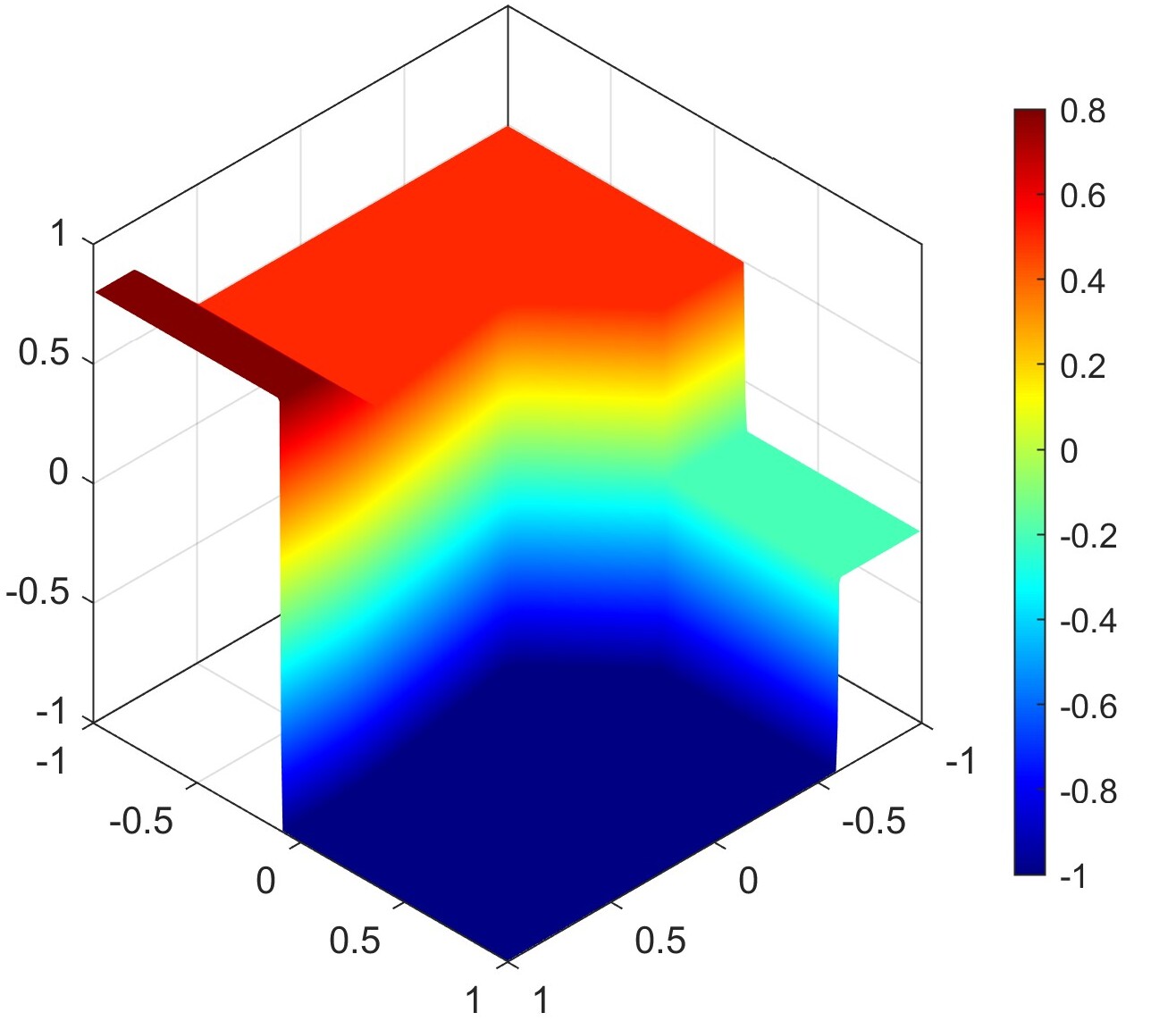}\hfill 
		\includegraphics[width=.32\textwidth]{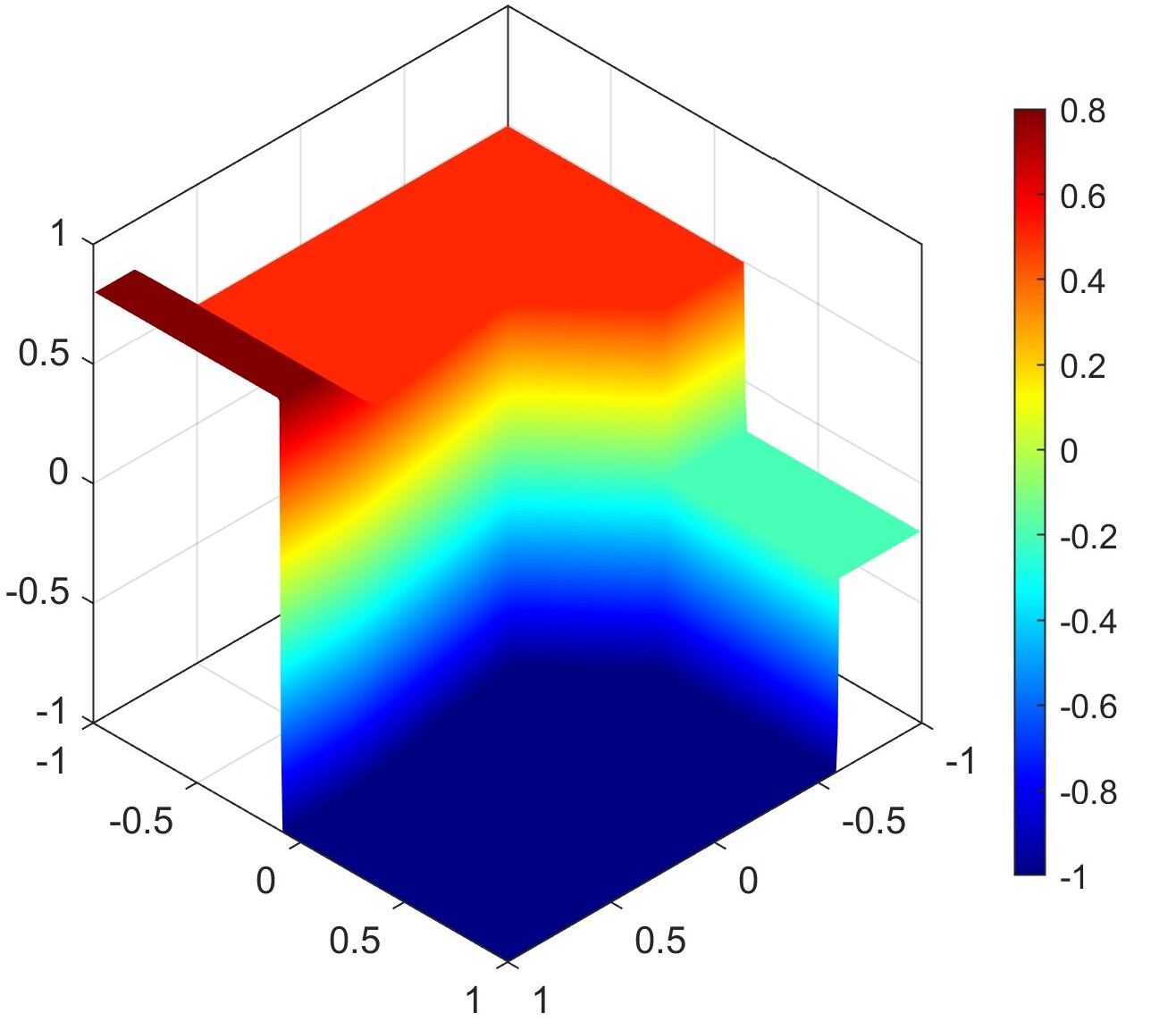} 

		\caption{OEDG solutions at $t=0.5$ for \Cref{ex:2DburgersRP}. From left to right: $\mathbb P^1$, $\mathbb P^2$, and $\mathbb P^3$ elements. 
		}
		\label{2DburgersRie}
	\end{figure}
\end{exmp}
\subsection{2D compressible Euler equations}
This subsection presents several benchmark numerical tests for the 2D compressible Euler equations. These equations can be expressed in the form of \eqref{eq:2DHCL} with ${\bf u}=(\rho, \rho {\bf v}, E)^\top$ and 
${\bf f} ({\bf u}) = (\rho {\bf v}, \rho {\bf v} \otimes {\bf v} + p {\bf I}, (E+p) {\bf v} )^\top$. 
Here $\rho$ represents the density, ${\bf v}$ denotes the velocity field, and $p$ is the pressure. The total energy is given by $E=\frac{1}{2} \rho |{\bf v}|^2+\frac{p}{\gamma -1}$, and the adiabatic index $\gamma$ is set to $1.4$ unless specified otherwise.

\begin{exmp}[smooth problem]\label{ex:EulerSmooth}
	The exact solution of this problem is given by 
	$$(\rho,{\bf v},p) = (1+0.2\sin(\pi(x+y-t)),0.7,0.3,1),$$ 
	which describes a sine wave that propagates periodically within the domain $[0,2]^2$.  
	Table \ref{2Deuler} shows the numerical errors and corresponding convergence orders for the density at $t=2$ obtained by the 2D $\mathbb P^k$-based OEDG method with $(k+1)$th-order RK time discretization. 
	One can observe that the convergence orders are above the theoretical order of $k+1$,  
	 incrementally nearing $k+1$ as the mesh is refined.  

%
%

		\begin{table}[!htb] 
			\caption{Errors and convergence rates for 2D $\mathbb P^k$-based OEDG method for \Cref{ex:EulerSmooth}.}\label{2Deuler}
			
			\begin{center}
				\centering
				\begin{tabular}{c|c|c|c|c|c|c|c} 
					\bottomrule[1.0pt]
					&	$N_x\times N_y$ &$L^1$ error& rate & $L^2$ error & rate & $L^\infty$ error & rate   \\ \hline \multirow{5}*{$1$}
					
					&80$\times$80&1.83e-4&- &2.08e-4&- &4.39e-4&- \\ 
					&160$\times$160&2.52e-5&2.86 &2.94e-5&2.82 &6.81e-5&2.69 \\
					&320$\times$320&3.85e-6&2.71 &4.60e-6&2.68 &1.17e-5&2.54 \\ 
					&640$\times$640&7.08e-7&2.44 &8.83e-7&2.38 &2.27e-6&2.37 \\ 
					&1280$\times$1280&1.53e-7&2.21 &2.00e-7&2.14 &4.85e-7&2.23 \\ 
					
					\hline \multirow{5}*{$2$}
					
					&80$\times$80&6.10e-4&-&7.85e-4&-&1.54e-3&-\\ 
					&160$\times$160&3.82e-6&7.32&4.37e-6&7.49&7.94e-6&7.60\\ 
					&320$\times$320&1.56e-7&4.61&1.77e-7&4.63&3.36e-7&4.56\\ 
					&640$\times$640&9.14e-9&4.09&1.06e-8&4.06&2.36e-8&3.83\\ 
					&1280$\times$1280&6.13e-10&3.90&7.57e-10&3.81&2.00e-9&3.56\\

					\hline \multirow{5}*{$3$}
					&80$\times$80&7.66e-7&- &1.01e-6&- &2.86e-6&- \\ 
					&160$\times$160&1.35e-8&5.83 &1.57e-8&6.01 &3.32e-8&6.43 \\ 
					&320$\times$320&3.98e-10&5.08 &4.50e-10&5.12 &8.73e-10&5.25 \\ 
					&640$\times$640&1.25e-11&4.99 &1.42e-11&4.99 &3.26e-11&4.74 \\

					\toprule[1.0pt]
				\end{tabular}
			\end{center}
		\end{table}
\end{exmp}

\begin{exmp}[shock reflection problem] 
	In this example, we simulate the shock reflection problem (cf.~\cite{zhu2017numerical}) in the domain $\Omega = [0,4]\times[0,1]$. 
	The initial conditions are defined as   
	$(\rho_0,{\bf v}_0,p_0) = (1,2.9,0,\frac{5}{7}),$ 
	which are also imposed as inflow condition on the left boundary of $\Omega$. 
	The upper boundary is also specified as inflow condition but with a different state 
	$$(\rho,{\bf v},p)=(1.69997, 2.61934, -0.50632, 1.52819).$$  
	The reflective wall boundary condition is applied to the lower boundary, while the outflow boundary condition is used on the right boundary. 
	The density contour plot at $t=20$, computed using the third-order OEDG scheme on a uniform rectangular mesh of $200 \times 50$ cells, are presented in Figure \ref{2D_shockreflectionwaveAA}. 
			As time evolves, the solution converges to a 
		steady 
		state. 
		To study the convergence behavior, we follow \cite{zhu2017numerical,LiuLuShu_OFDG_system} and compute the average residue as 
		\begin{equation}\label{eq:Res}
			{\rm Res} := \frac1{4N_xN_y} \sum_{i=1}^{N_x}\sum_{j=1}^{N_y}\sum_{q=1}^4\left|R^{(q)}_{ij} \right|,
		\end{equation}
		where $R^{(q)}_{ij}:= \frac{1}{\tau} ( u_h^{n+1,(q)} -u_h^{n,(q)}  ) $ denotes the local residue on the cell $[x_{i-\frac12},x_{i+\frac12}]\times [y_{j-\frac12},y_{j+\frac12}]$ for the $q$th component of the conservative vector ${\bf u}_h$. 
		   The convergence history of the average residue over time is plotted 
		Figure \ref{2D_shockreflectionwaveBB}. It is seen that the average residue reaches tiny values close to the machine epsilon in double precision and remains at that level after about $t = 9$.

	\begin{figure}[!htb]
		\centering
		\begin{subfigure}[h]{.66\linewidth}
			\includegraphics[width=0.96\textwidth]{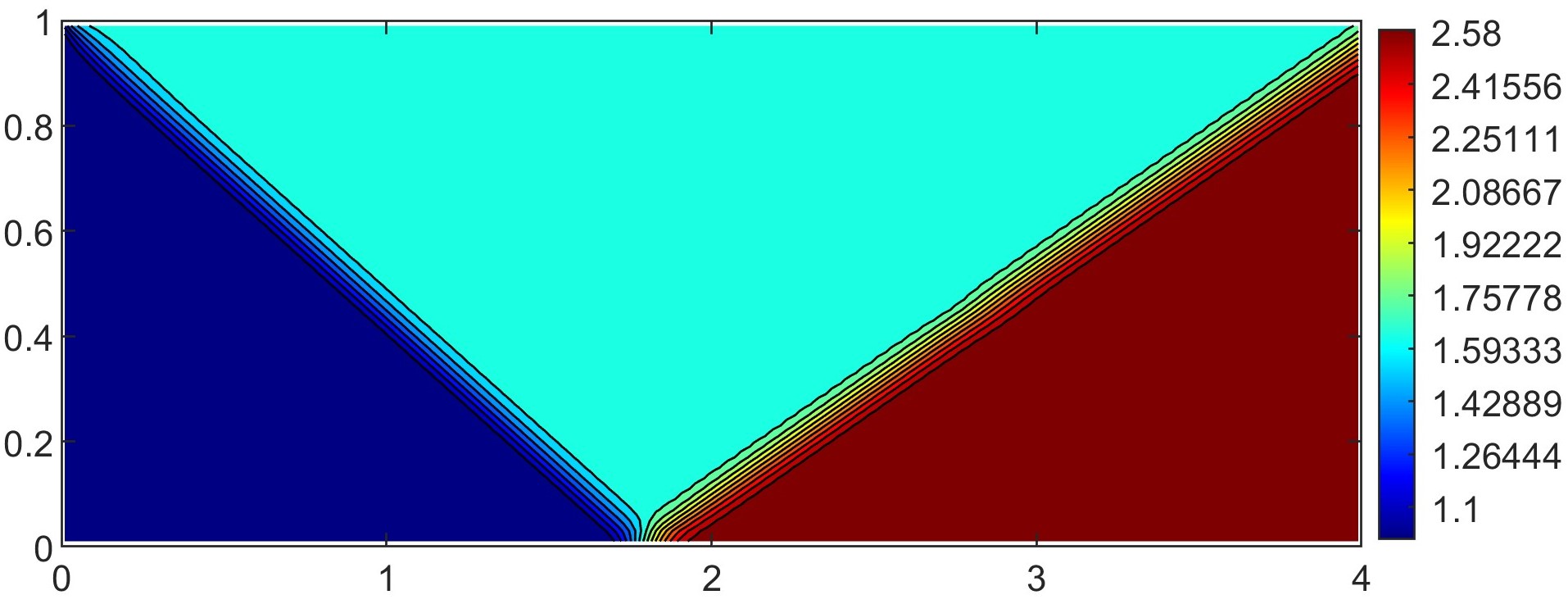}
			\subcaption{Density contour at $t=20$}
			\label{2D_shockreflectionwaveAA}
				\end{subfigure}
			\hfill 
				\begin{subfigure}[h]{.33\linewidth}
			\includegraphics[width=0.96\textwidth]{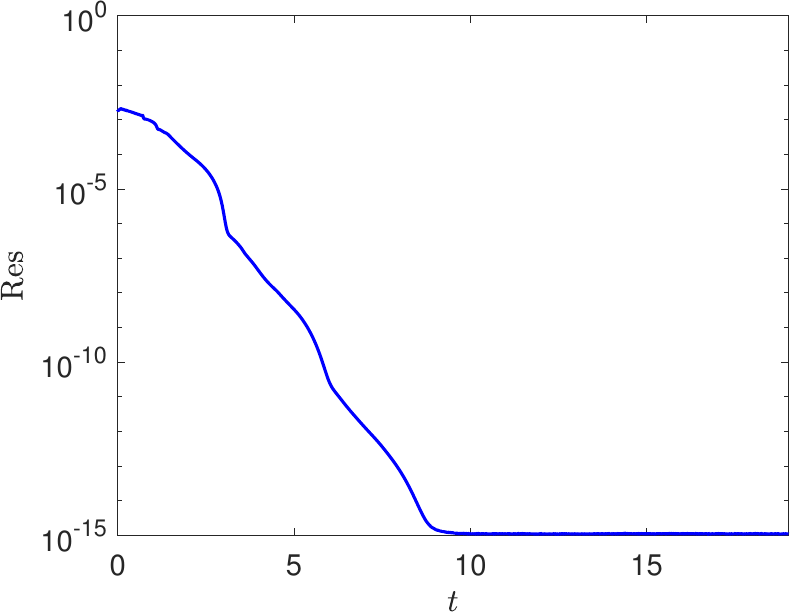}
			\subcaption{Average residue}
			\label{2D_shockreflectionwaveBB}
		\end{subfigure}
		\caption{Shock reflection problem simulated by third-order OEDG scheme.}
		\label{2D_shockreflectionwave}
	\end{figure}

\end{exmp}

\begin{exmp}[supersonic flow past two plates] 
	This example simulates a supersonic flow past two plates, with an attack angle of $15^{\circ}$, in the domain $[0,10] \times [-5,5]$ as detailed in \cite{zhu2017numerical}.
	The initial condition is given by  
	$$(\rho_0, {\bf v}_0,p_0) = \left(1,\cos(\frac{\pi}{12}),\sin(\frac{\pi}{12}),\frac{1}{\gamma M_{\infty}^2} \right),$$
	where $M_{\infty}=3$ represents the Mach number of the free stream. 
	The plates are situated at  $y=\pm2$ with $x\in(2,3)$, where slip boundary conditions are imposed. 
	The inflow boundary conditions are set on the left and lower boundaries, whereas the outflow conditions are specified on the upper and right boundaries. The simulation is conducted until 
	$t=100$ using the third-order OEDG scheme on a mesh of $200 \times 200$ uniform cells. 
	The results are illustrated in \Cref{2D_supersonicAA}, showing the numerical solution at the steady state at $t=100$. 
	We see that the flow structures are correctly captured by the OEDG method without nonphysical oscillations. 
	\Cref{2D_supersonicBB} presents 
	the convergence history of the average residue over time, which is computed by 
	\eqref{eq:Res}. 
	It is observed that the average residue settles down to tiny values around the machine epsilon in double precision. 
	\begin{figure}[!h]
		\centering
		\begin{subfigure}[h]{.52\linewidth}
			\centering 
			\includegraphics[width=0.9\textwidth]{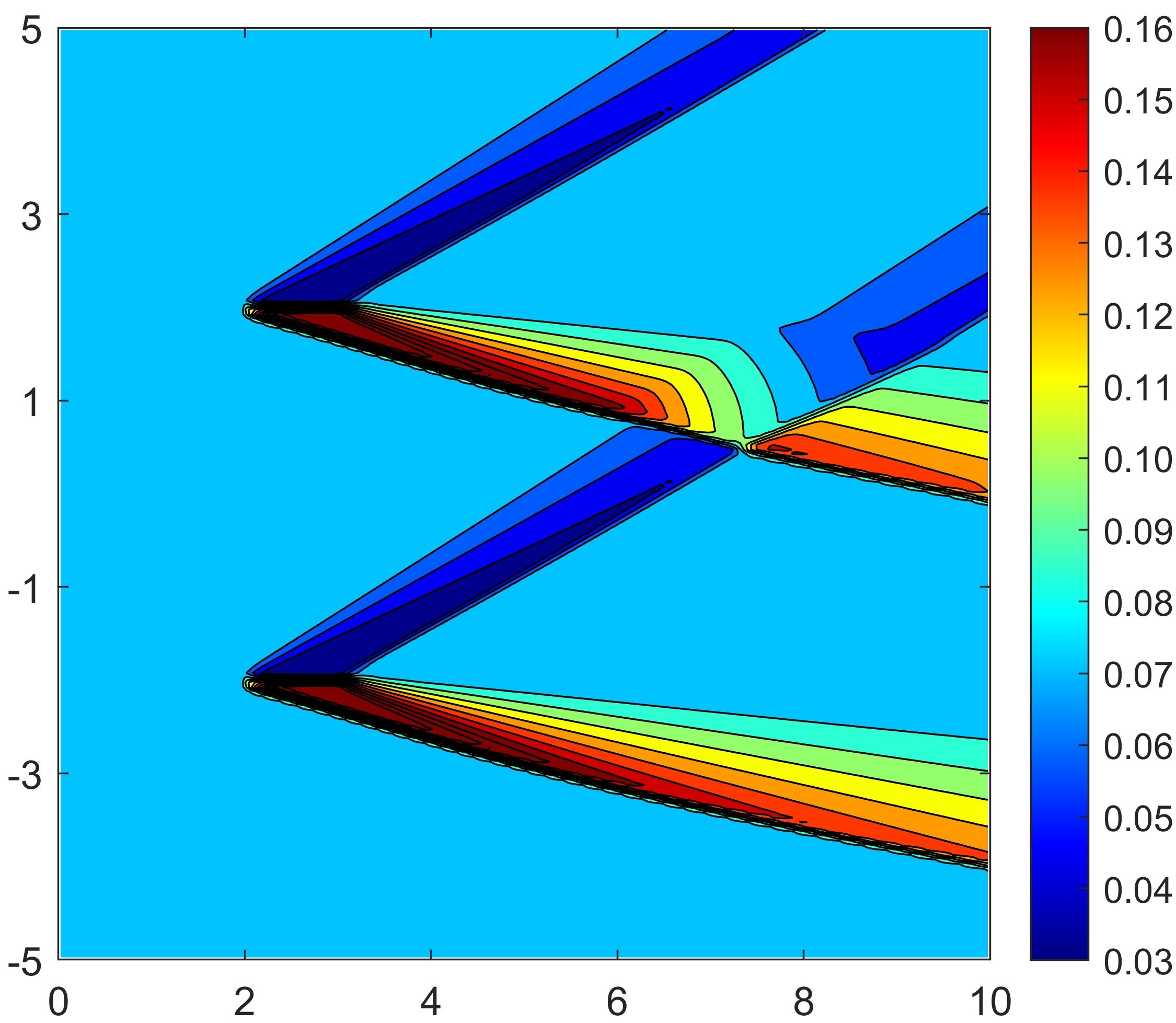}
			\subcaption{Density contour at $t=100$}
			\label{2D_supersonicAA}
		\end{subfigure}
		\hfill 
		\begin{subfigure}[h]{.46\linewidth}
			\centering 
			\includegraphics[width=0.9\textwidth]{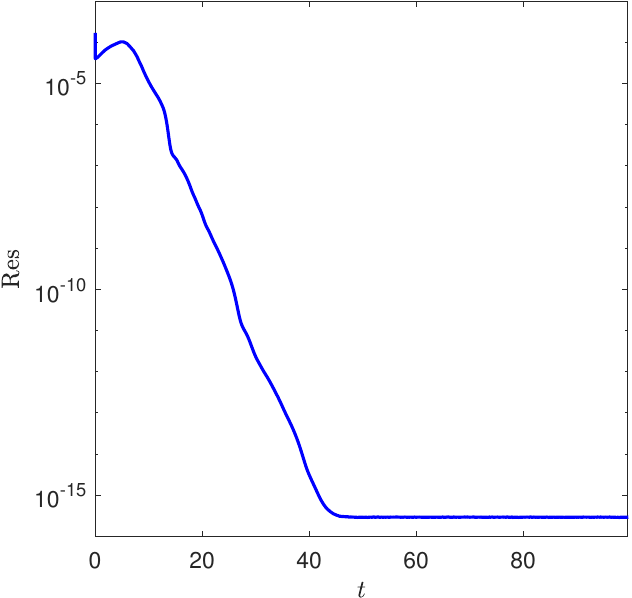}
			\subcaption{Average residue}
			\label{2D_supersonicBB}
		\end{subfigure}
		\caption{Supersonic flow problem simulated by third-order OEDG scheme.}
		\label{2D_supersonic}
	\end{figure}
\end{exmp}

\begin{figure}[!htb]
	\centering
	\begin{subfigure}[h]{.99\linewidth}
		\includegraphics[width=0.48\textwidth]{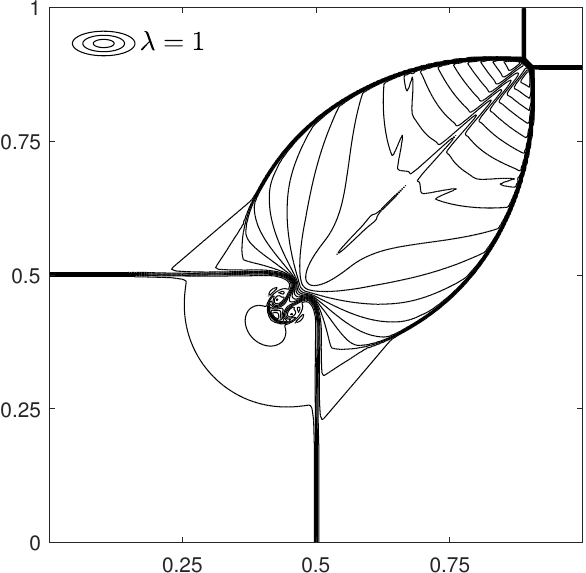} \hfill 
		\includegraphics[width=0.48\textwidth]{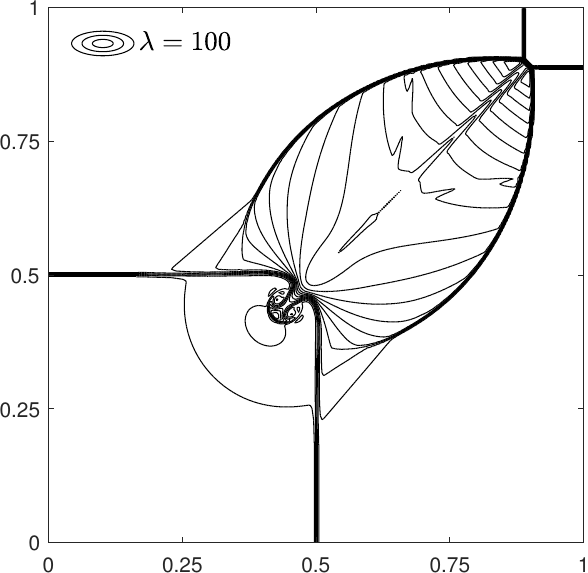}
		\subcaption{Proposed OEDG with scale-invariant damping}
		\vspace{3.6mm}
	\end{subfigure}
	\begin{subfigure}[h]{.99\linewidth}
		\includegraphics[width=0.48\textwidth]{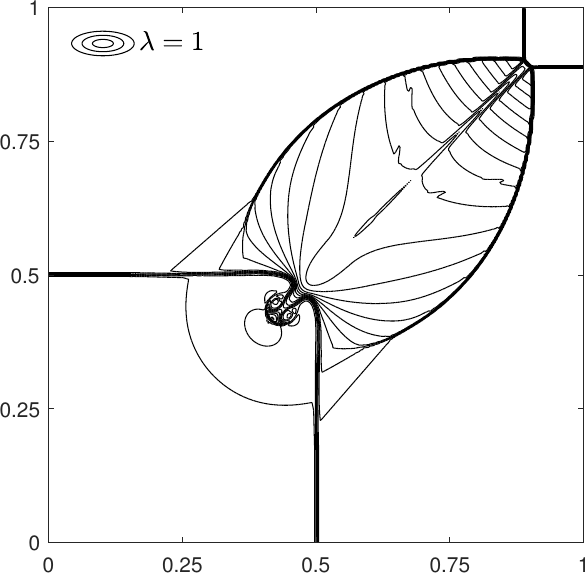} \hfill 
		\includegraphics[width=0.48\textwidth]{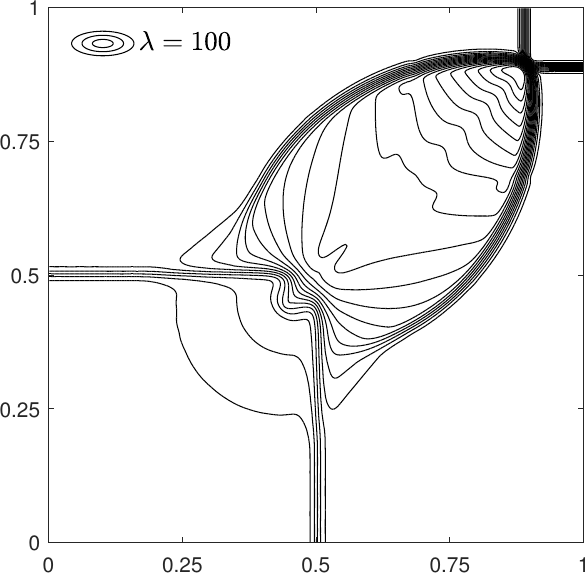}
		\subcaption{OFDG with non-scale-invariant damping}
	\end{subfigure}
	\caption{Contour plots of density at $t = 0.25$ computed by OEDG and OFDG methods. }
	\label{2D_rie1}
\end{figure}
\begin{exmp}[2D Riemann problems] 
	In this example, we investigate two classic 2D Riemann problems of the Euler equations in the domain $[0,1]\times [0,1]$ with outflow boundary conditions. 
	To check the scale-invariant property of the 2D OEDG method, we consider the scaled 
	 initial data ${\bf u}^\lambda(x,0)=\lambda {\bf u}_0(x)$, where the scaling represents the use of different units for ${\bf u}$. 
	For the first Riemann problem, ${\bf u}_0(x)$ is given by  
	$$(\rho_0, {\bf v}_0,p_0) = \begin{cases}
		(0.8,0,0,1), \quad  & x<0.5,y<0.5,\\
		(1, 0.7276, 0, 1), \quad  &x<0.5,y>0.5,\\
		(1, 0, 0.7276, 1), \quad & x>0.5,y<0.5,\\
		(0.5313, 0, 0, 0.4), \quad & x>0.5,y>0.5,\\
	\end{cases}
$$ 
	which involve two stationary contact discontinuities and two shocks. 
	The initial solution ${\bf u}_0(x)$ of the second Riemann problem is defined by 
	$$(\rho_0, {\bf v}_0, p_0) = \begin{cases}
		(0.138, 1.206, 1.206, 0.029), \quad & x<0.5,y<0.5,\\
		(0.5323, 1.206, 0, 0.3),  \quad & x<0.5,y>0.5,\\
		(0.5323, 0, 1.206, 0.3), \quad & x>0.5,y<0.5,\\
		(1.5, 0, 0, 1.5), \quad & x>0.5,y>0.5.\\
	\end{cases}$$ 
	\Cref{2D_rie1} presents the numerical results of the first Riemann problem simulated by the third-order OEDG and OFDG methods, respectively, on the rectangular mesh of $320\times 320$ uniform cells for two distinct cases ($\lambda=1$ and $\lambda=100$).  
	The results for the second Riemann problem are depicted in \Cref{2D_rie2}. 
	For the normal scale with $\lambda =1$, both  OEDG and OFDG methods 
	yield satisfactory results,  capturing the solution accurately without producing excessive dissipation or nonphysical oscillations. 
	Yet, when $\lambda =100$, the damping terms in the OFDG method escalate to $\lambda^2$ times its original magnitudes, causing excessive smearing and smoothing out 
	 the detailed features near discontinuities. 
	 Contrarily, the OEDG method consistently produces satisfactory results agreeing with those in the normal scale case, thanks to its scale-invariant advantage. 
	 It is worth mentioning that the results of OFDG method will be improved if our scale-invariant damping \eqref{eq:delta} is employed.

	\begin{figure}[!htb] 
		\centering
		\begin{subfigure}[h]{.99\linewidth}
			\includegraphics[width=0.48\textwidth]{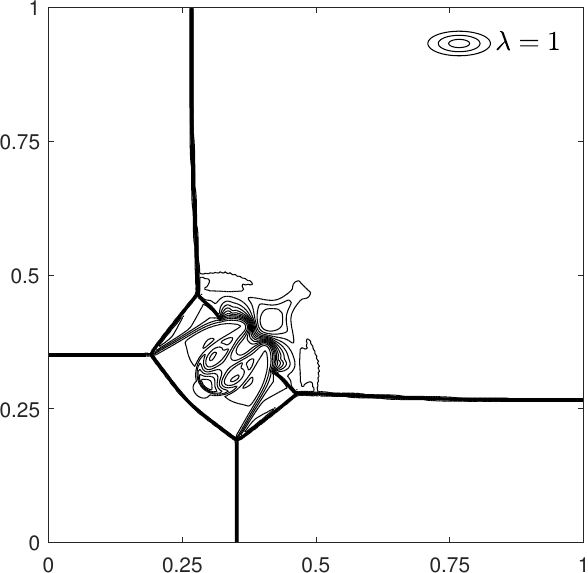} \hfill 
			\includegraphics[width=0.48\textwidth]{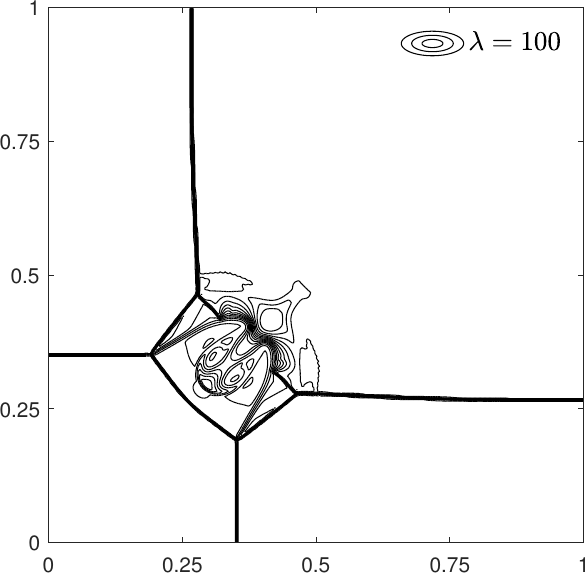}
			\subcaption{Proposed OEDG with scale-invariant damping}
			\vspace{3.6mm}
		\end{subfigure}
%
		\begin{subfigure}[h]{.99\linewidth}
			\includegraphics[width=0.48\textwidth]{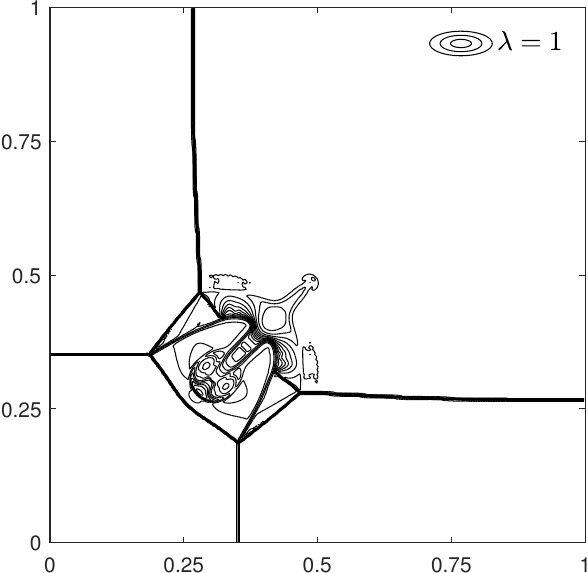} \hfill 
			\includegraphics[width=0.48\textwidth]{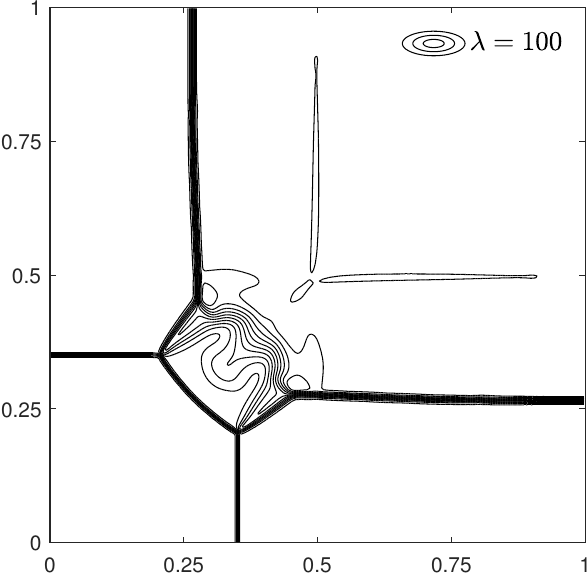}
			\subcaption{OFDG with non-scale-invariant damping}
		\end{subfigure}
		\caption{Contour plots of density at $t = 0.35$ computed by OEDG and OFDG methods. }
		\label{2D_rie2}
	\end{figure}
\end{exmp}

\begin{exmp}[shock-vortex interaction]
	Consider the interaction between a vortex and a Mach 1.1 shock, which is perpendicular to the $x$-axis and located at $x=0.5$. 
	The left state of the shock is $(\rho, {\bf v}, p) = (1,1.1\sqrt{\gamma},0,1)$, while the right state is  derived from the Rankine--Hugoniot condition. 
	Initially, an isentropic vortex, centered at $(x_c,y_c) = (0.25,0.5)$, is superimposed on the mean flow to the left of the shock.
	The velocity, temperature, and entropy perturbations due to the vortex are defined by 
	\begin{align*}
		\delta {\bf v} = \frac{\varepsilon}{r_c} e^{\alpha(1-\eta^2)}(\bar{y}, -\bar{x}), \quad
		\delta T = -\frac{(\gamma-1)\varepsilon^2}{4\alpha\gamma} e^{2\alpha(1-\eta^2)}, \quad
		\delta S =0,
	\end{align*}
	where $r^2= \bar{x}^2 + \bar{y}^2$, $\eta = \frac{r}{r_c}$, and $(\bar{x}, \bar{y}) = (x-x_c, y-y_c)$. 
	The parameters $\varepsilon=0.3$, $\alpha = 0.204$, and $r_c=0.05$ represent the vortex's strength, decay rate, and critical radius, respectively. 
	The computational domain is set as $[0,2]\times[0,1]$ and is divided into $400\times 200$ uniform  rectangular cells. Reflective boundary conditions are applied on both the upper and lower boundaries,  while the left and right boundaries utilize inflow and outflow conditions, respectively.  
	The third-order OEDG method is employed for simulations up to $t=0.8$. 
	Contour plots of pressure at six distinct times are illustrated in Figure \ref{2D_vortex}. 
	These results are in good agreement with those reported in \cite{LiuLuShu_OFDG_system}. 
	The OEDG method effectively captures the shock-vortex interaction dynamics and accurately depicts  intricate wave structures. Notably, the OEDG solution exhibits no nonphysical oscillations.


	\begin{figure}[!htb]
		\centering
		\begin{subfigure}[h]{.32\linewidth}
			\centering 
		\includegraphics[width=0.99\textwidth]{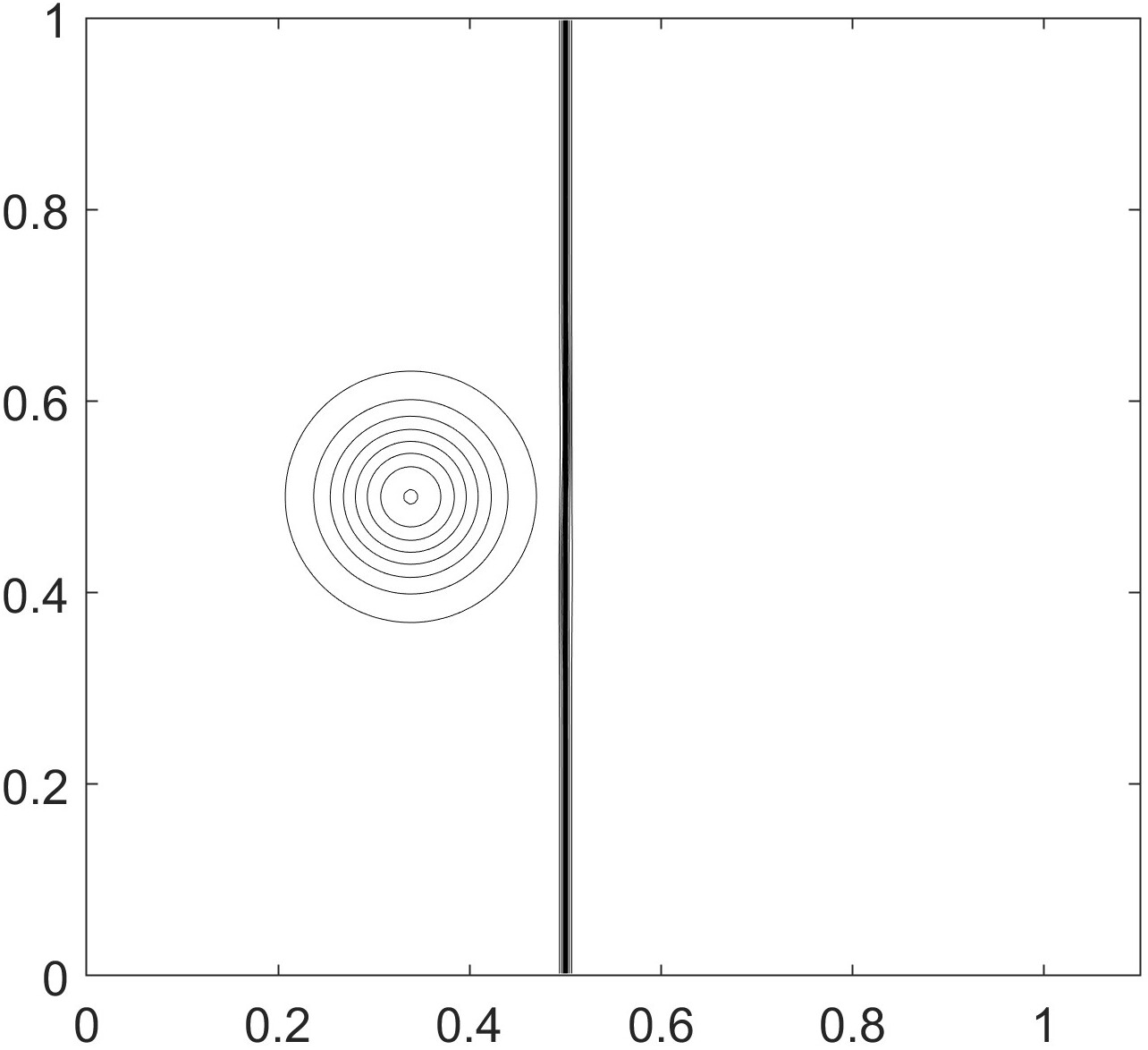}
		\subcaption{$t=0.068$}
	\end{subfigure}
\hfill 
		\begin{subfigure}[h]{.32\linewidth}
	\centering 
	\includegraphics[width=0.99\textwidth]{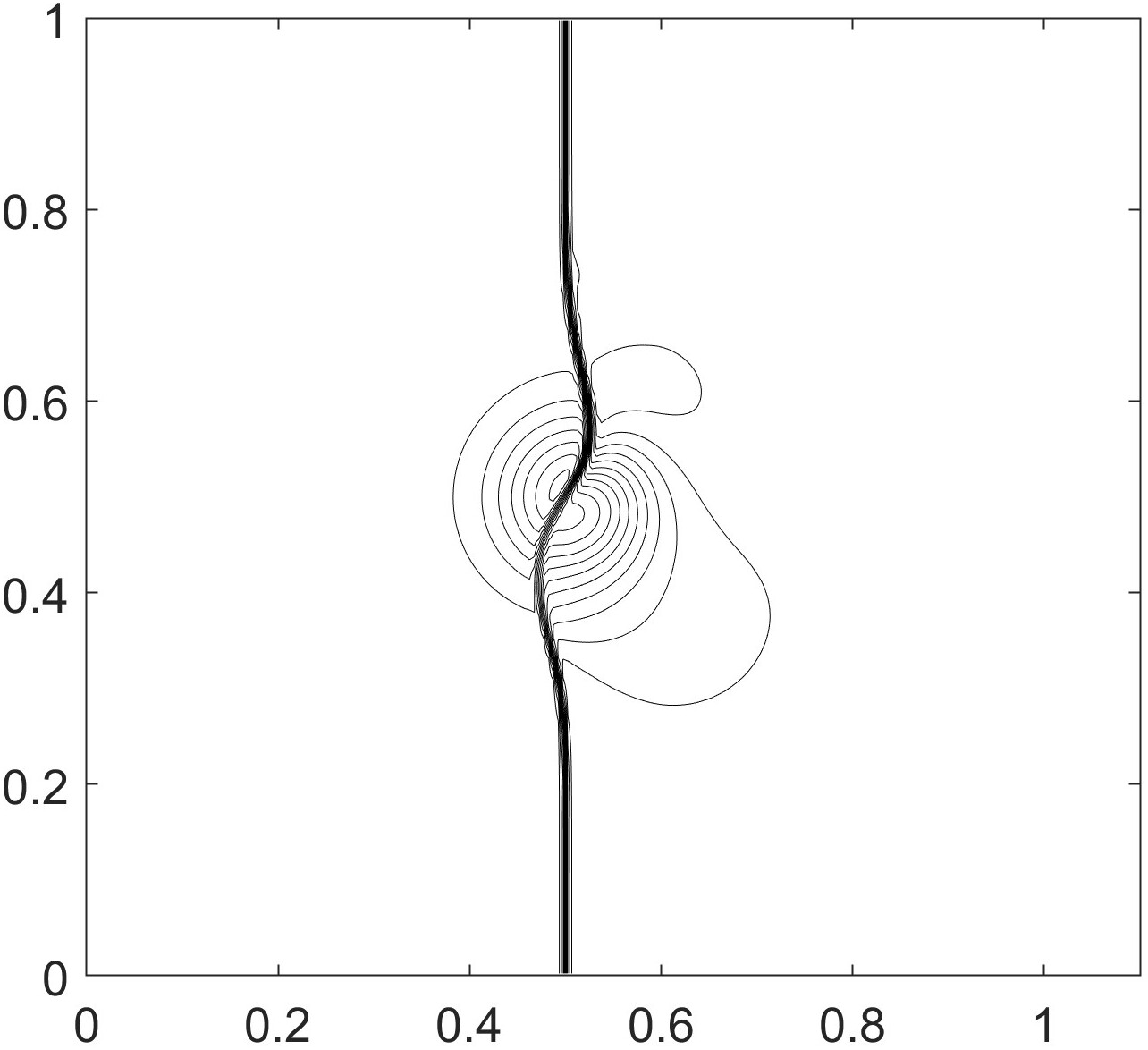}
	\subcaption{$t=0.203$}
\end{subfigure}
\hfill 
		\begin{subfigure}[h]{.32\linewidth}
	\centering 
	\includegraphics[width=0.99\textwidth]{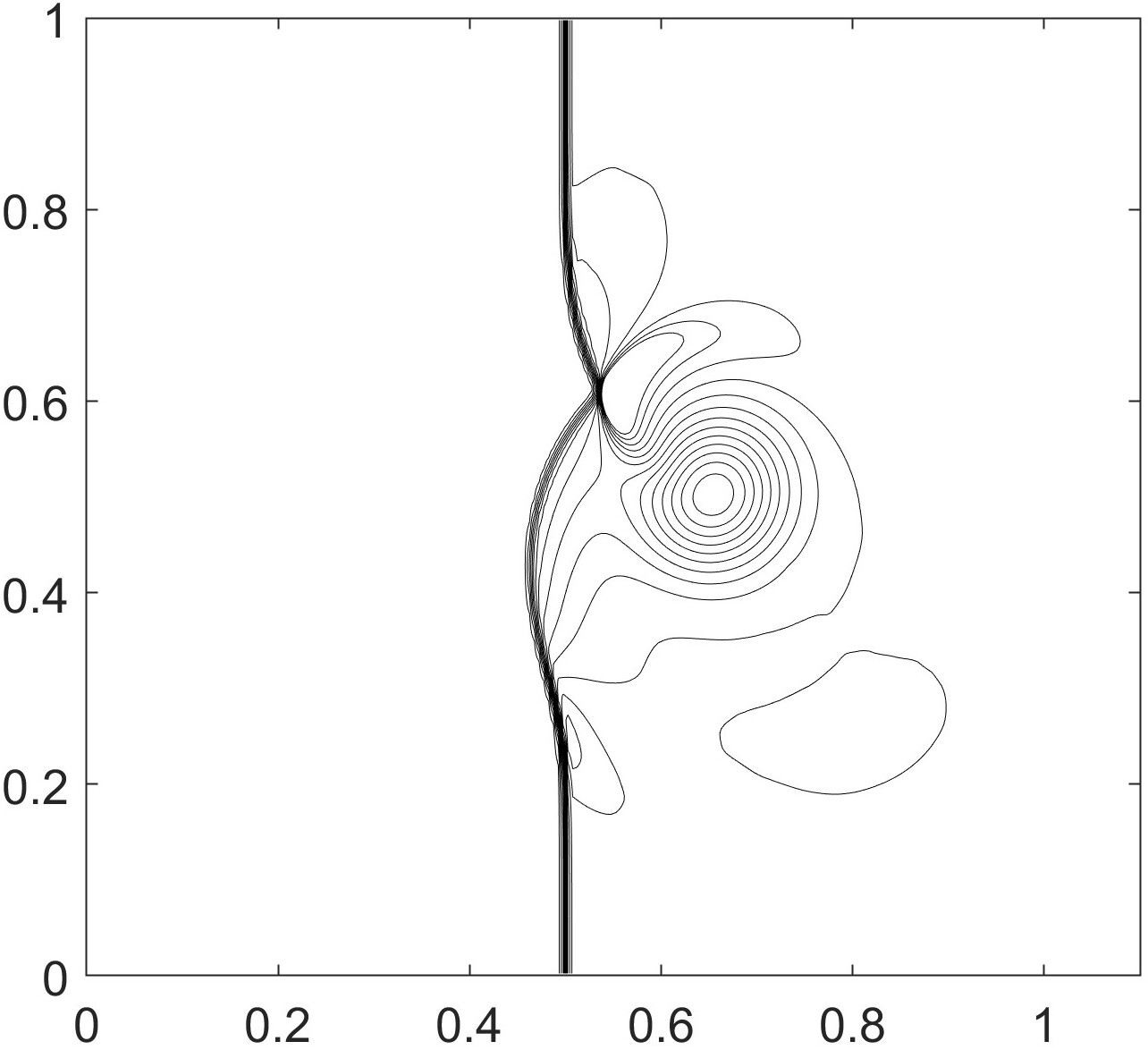}
	\subcaption{$t=0.330$}
	
\end{subfigure}
\vspace{3.6mm}

		\begin{subfigure}[h]{.32\linewidth}
	\centering 
	\includegraphics[width=0.99\textwidth]{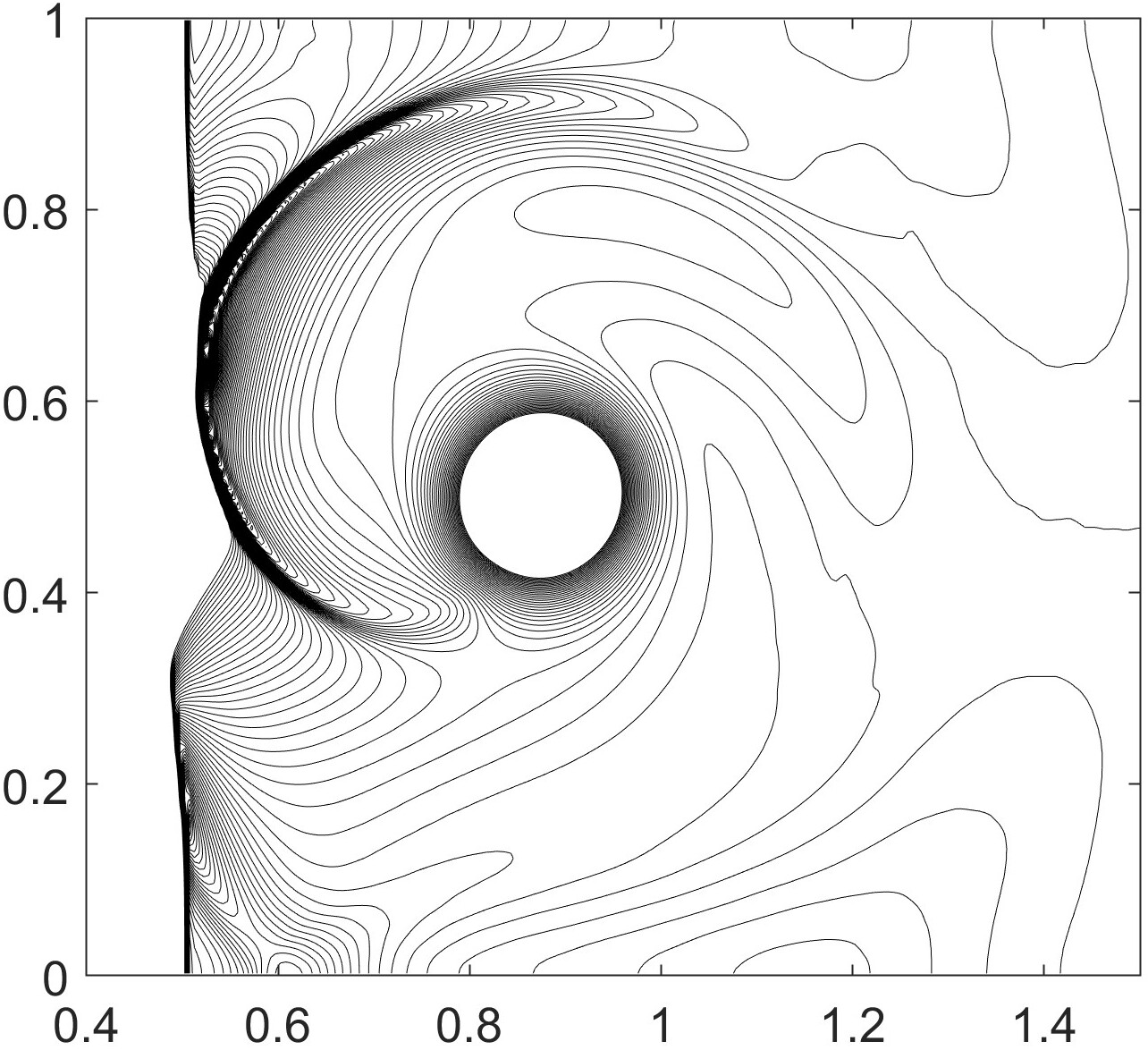}
	\subcaption{$t=0.529$}
\end{subfigure}
\hfill 
\begin{subfigure}[h]{.32\linewidth}
	\centering 
	\includegraphics[width=0.99\textwidth]{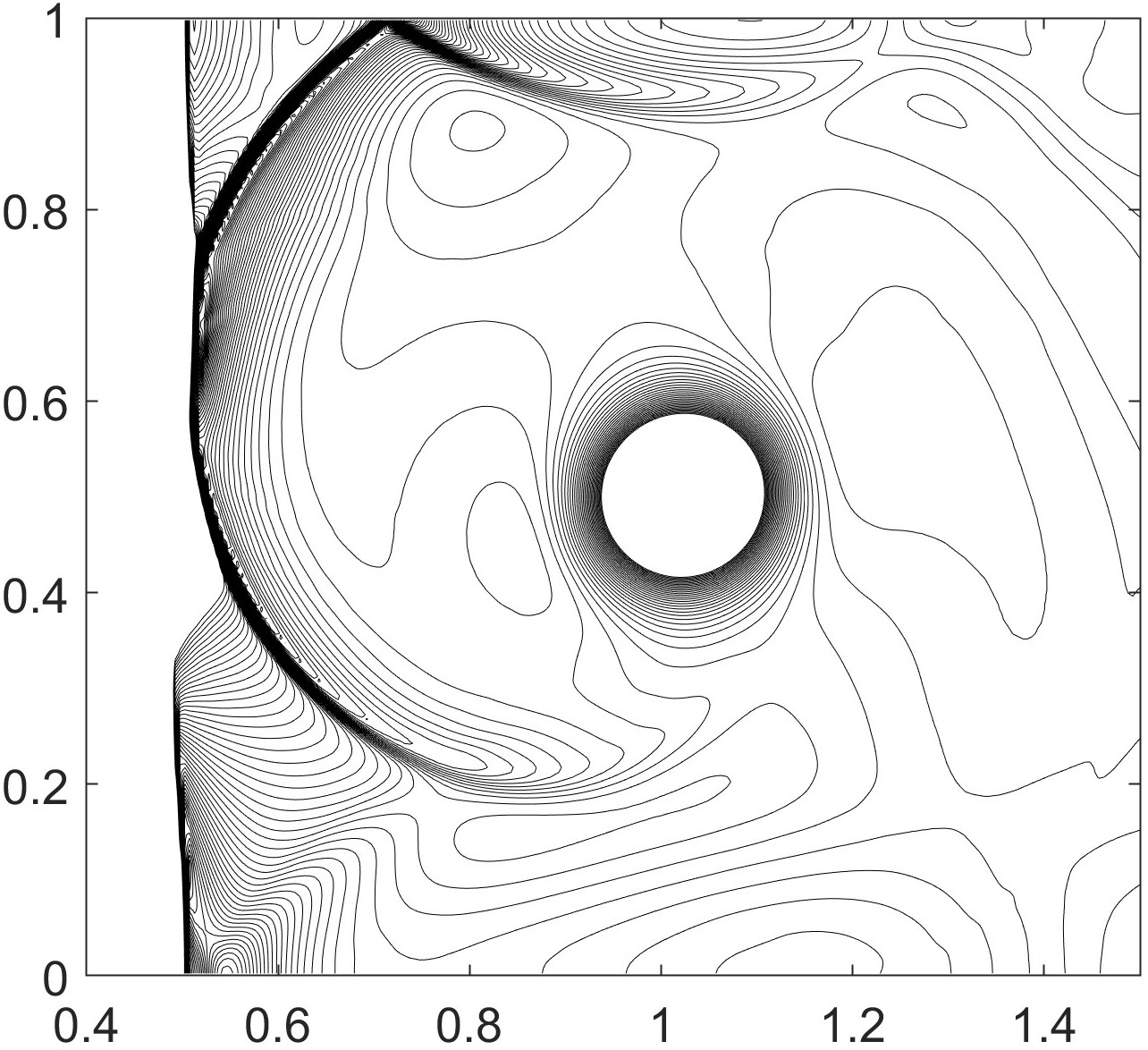}
	\subcaption{$t=0.662$}
\end{subfigure}
\hfill 
\begin{subfigure}[h]{.32\linewidth}
	\centering 
	\includegraphics[width=0.99\textwidth]{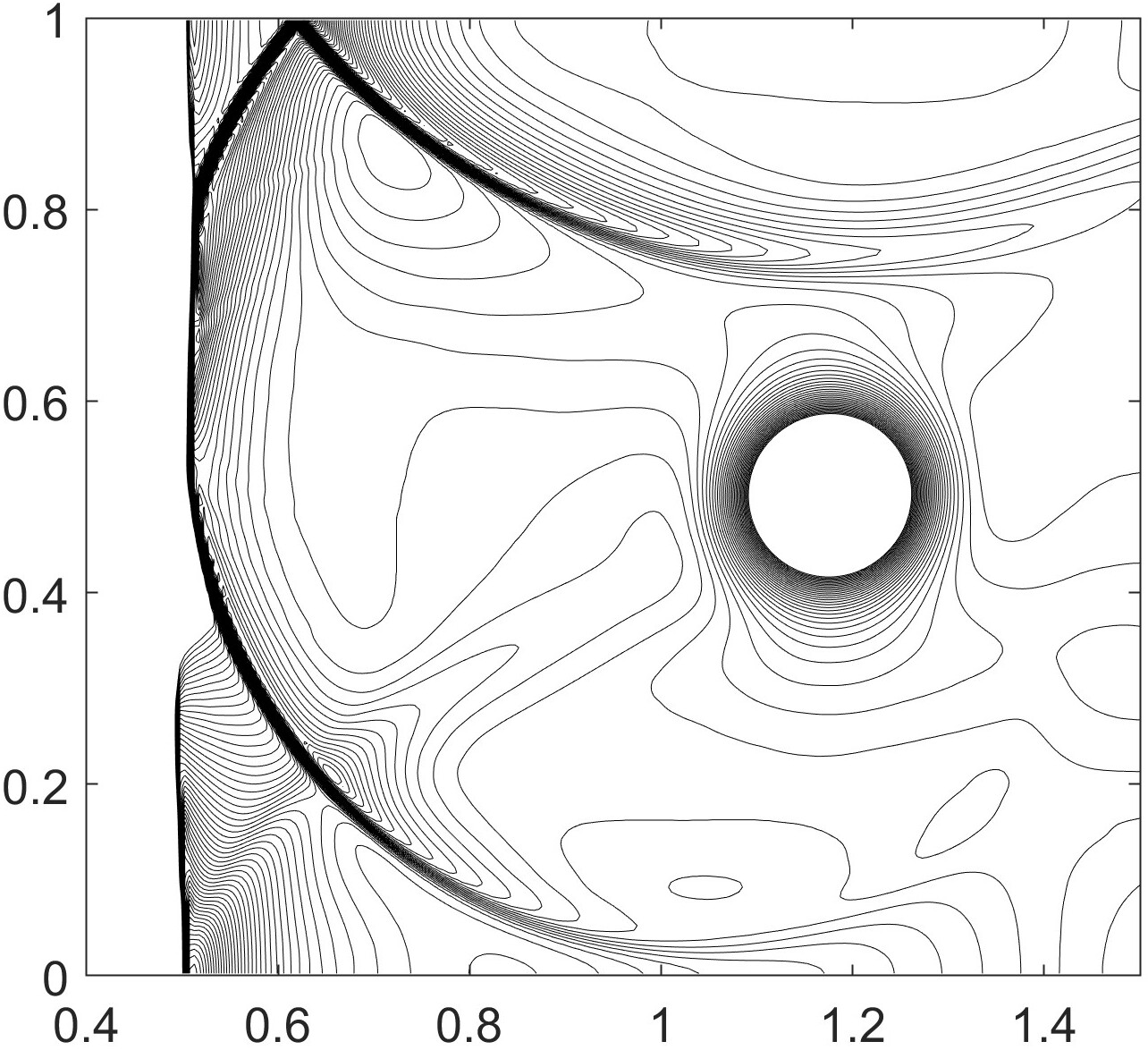}
	\subcaption{$t=0.8$}
\end{subfigure}

		\caption{Contour plots of pressure for shock-vortex interaction:  
		30 contour lines from 0.68 to 1.3 are shown in the top figures, while 90 contour lines from 1.19 to 1.36 are plotted in the bottom figures.}
		\label{2D_vortex}
	\end{figure}
\end{exmp}

\begin{exmp}[double Mach reflection]
	This is a classic test case in computational fluid dynamics for assessing the capabilities of numerical schemes in handling strong shocks and their interactions. 
	The computational domain is a rectangular region $\Omega = [0,4] \times [0,1]$.
	Initially, an oblique shock with a Mach number of 10 propagates to the right. This shock originates at $(x,y)=(1/6,0)$ 
	and forms an angle of $60^{\circ}$ relative to the bottom boundary. 
	 The states to the left and right of this shock are   
	 	$$(\rho_0,u_0,v_0,p_0) = \begin{cases}
	 	(8,8.25\cos(\frac{\pi}{6}),-8.25\sin(\frac{\pi}{6}),116.5),\quad &x<\frac{1}{6}+\frac{y}{\sqrt{3}},\\
	 	(1.4,0,0,1),\quad &x>\frac{1}{6}+\frac{y}{\sqrt{3}}.
	 \end{cases}$$ 
 	The left and right boundaries enforce inflow and outflow conditions, respectively. For the upper boundary, the postshock state is specified in the segment $x=0$ to $x=\frac16+\frac1{\sqrt{3}}(1+20t)$, while the preshock state is maintained for the remaining portion.  
 	On the lower boundary, the segment from 
 	$x=0$ to $x=1/6$ holds the postshock state, and a reflective boundary condition is applied to the remainder. 
 	We employ the proposed 
 	$\mathbb P^3$-based OEDG method for the simulation on the uniform rectangular mesh with $h_x=h_y=1/240$. 
 The resulting density contour plot at $t = 0.2$ is visualized in Figure \ref{2D_doublemach}. 
 The intricate flow characteristics, including the double Mach region, are finely delineated, with no nonphysical oscillations near the discontinuities observed. 
	
%

	\begin{figure}[!htb]
		\begin{subfigure}[h]{.685\linewidth}
			\centering
			\includegraphics[width=.99\textwidth]{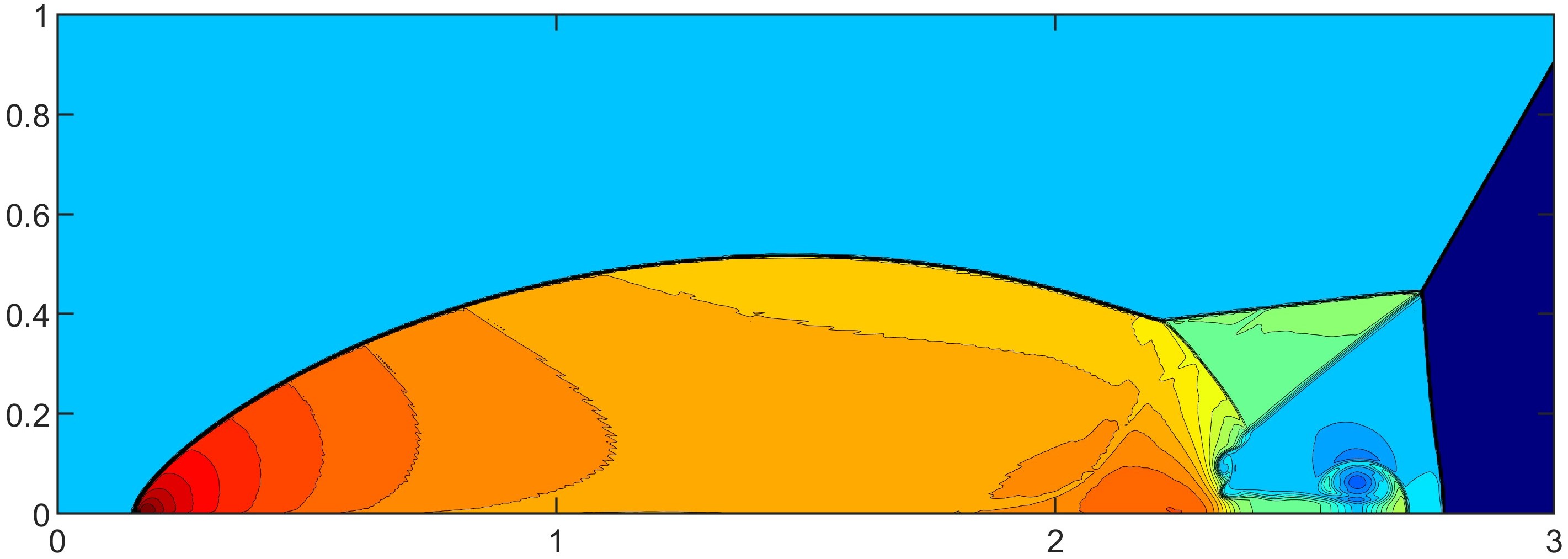}
		\end{subfigure}
	\hfill 
		\begin{subfigure}[h]{.29\linewidth}
			\includegraphics[width=.99\textwidth]{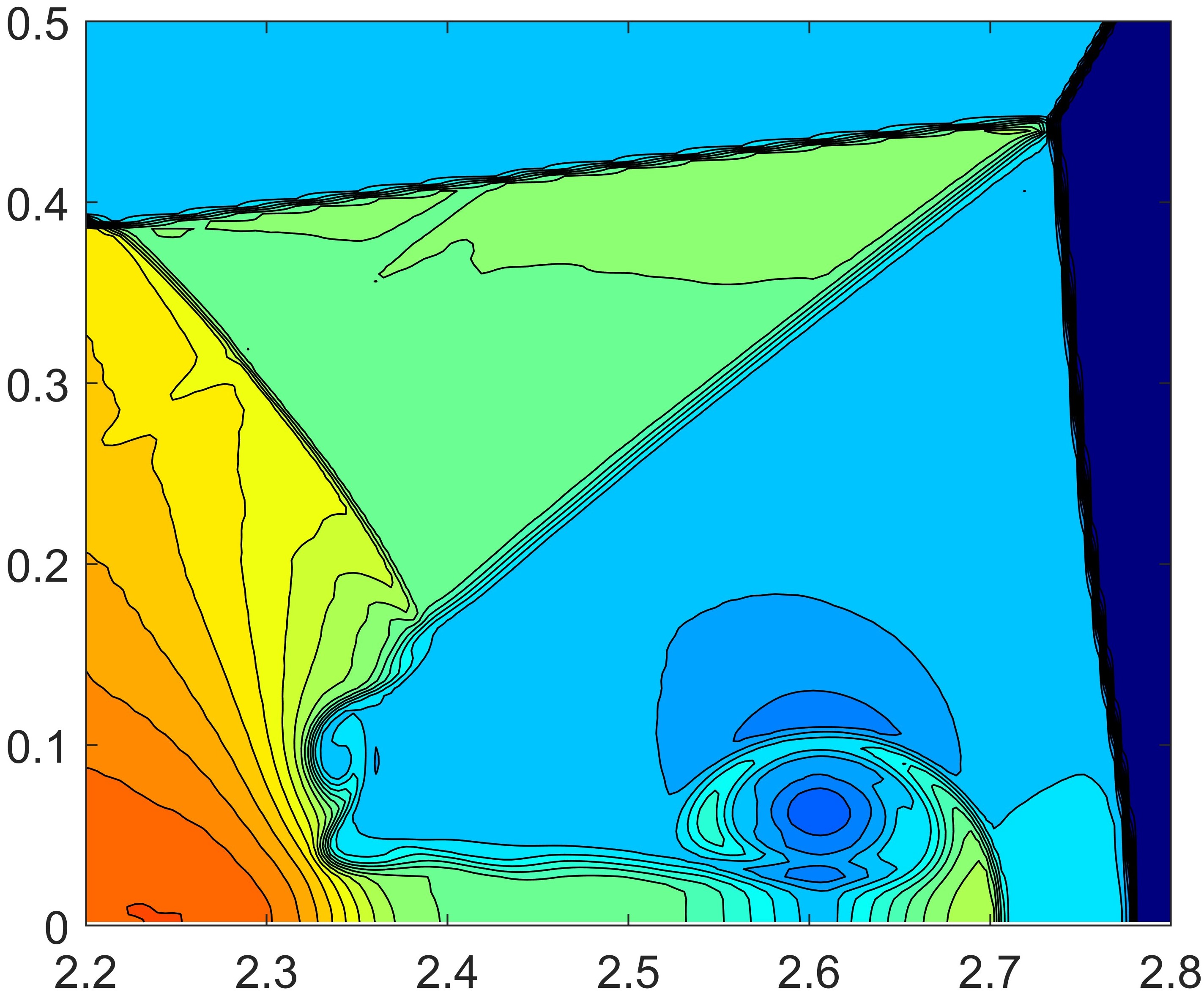}
		\end{subfigure}
	\caption{The contour plot of density (left) and its close-up (right) at $t=0.2$ obtained by the $\mathbb P^3$-based OEDG method with $h_x=h_y=1/240$.} 
		\label{2D_doublemach}
	\end{figure}
\end{exmp}

\begin{exmp}[Mach 2000 jet]
	In the last example, we further examine the robustness of the OEDG method through simulating a challenging jet problem \cite{zhang2010positivity,LiuLuShu_OFDG_system}. The ratio of heat capacity is set to be $\gamma = \frac{5}{3}$. 
	  The domain $\Omega = [0,1]\times[-0.25,0.25]$ is initially populated with a stationary fluid characterized by the state $(\rho,{\bf v},p) = (0.5,0,0,0.4127)$. A high-speed jet 
	state $(\rho,{\bf v},p) = (5,800,0,0.4127)$ is injected into $\Omega$ from the left boundary  within the range $y=-0.05$ to $0.05$. 
	Outflow conditions are applied to all remaining boundaries. 
	\Cref{2D_jet2000} presents the numerical results at $t = 0.001$ obtained by the third-order OEDG method with $320 \times 160$ cells. 
	The intricate structures of the jet flow, including the bow shock and shear layer, are captured with high resolution and agree with those computed in \cite{zhang2010positivity,LiuLuShu_OFDG_system}. 
	The OEDG method exhibits good robustness in this demanding test, and the computed solutions are free of  nonphysical oscillations.


	\begin{figure}[!htb]
		\centering
		\begin{subfigure}[h]{0.49\linewidth}
		\centering
		\includegraphics[width=1\textwidth]{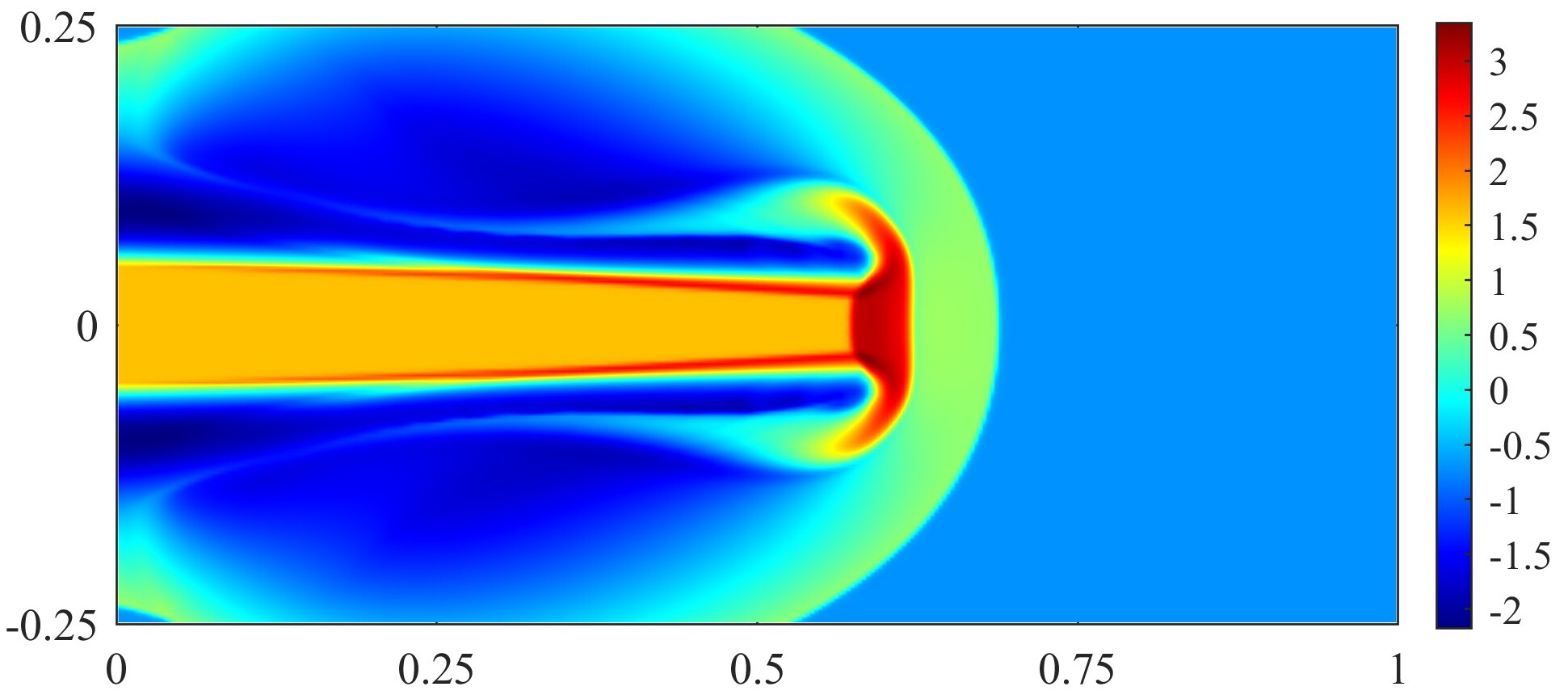}
		\subcaption{\small Density logarithm}
		\vspace{3mm}
		\end{subfigure}
	\hfill 
		\begin{subfigure}[h]{0.49\linewidth}
			\centering
			\includegraphics[width=1\textwidth]{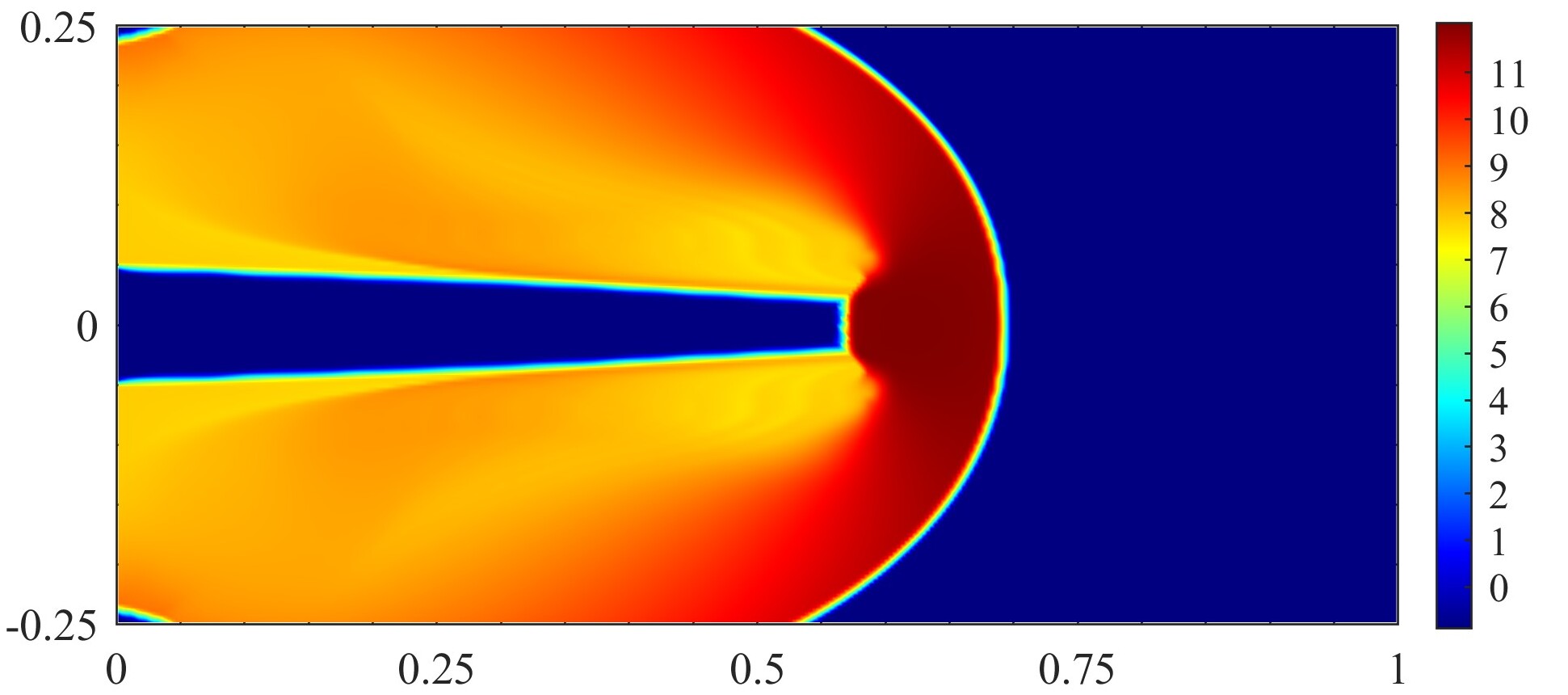}
			\subcaption{\small Pressure logarithm}
			\vspace{3mm}
		\end{subfigure}
	
			\begin{subfigure}[h]{0.49\linewidth}
		\centering
		\includegraphics[width=1\textwidth]{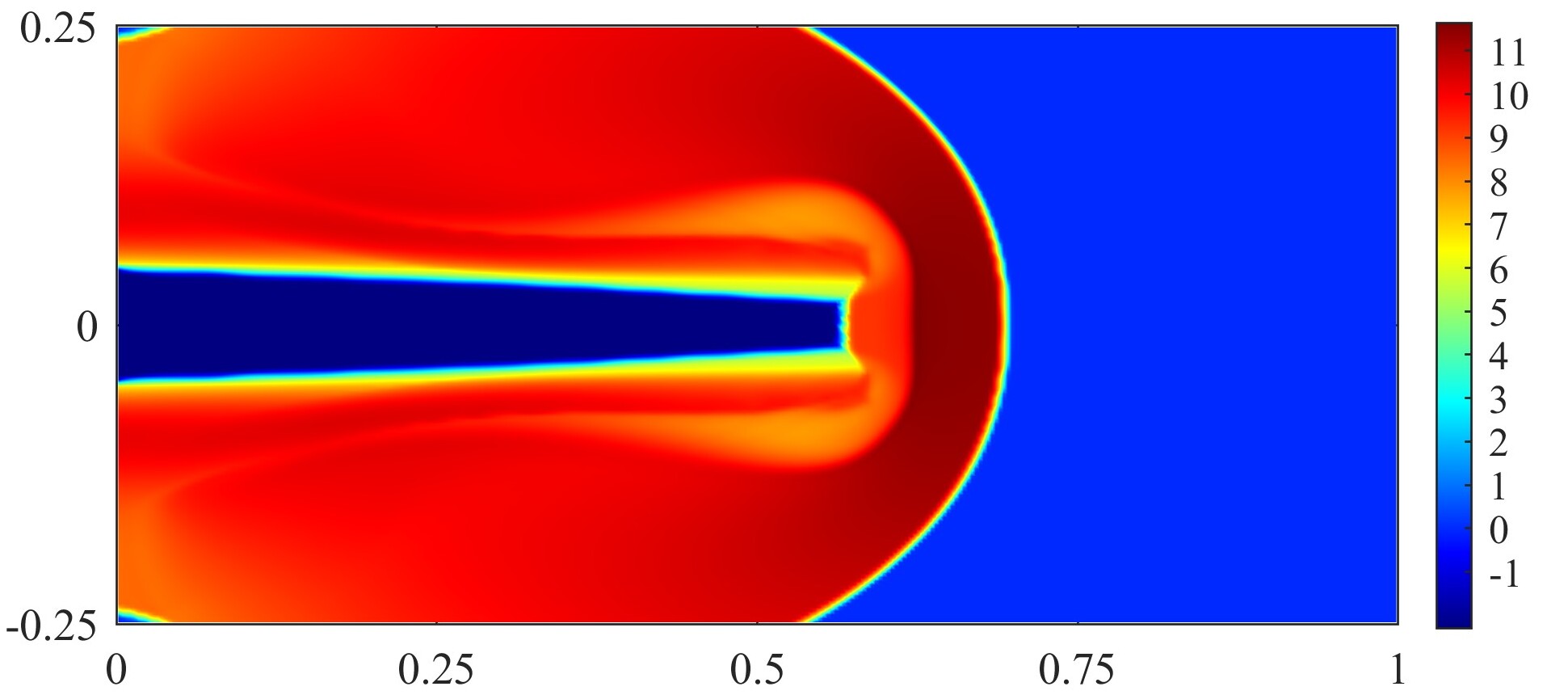}
		\subcaption{\small Temperature logarithm}
	\end{subfigure}
\hfill 
 			\begin{subfigure}[h]{0.49\linewidth}
 	\centering
 	\includegraphics[width=1\textwidth]{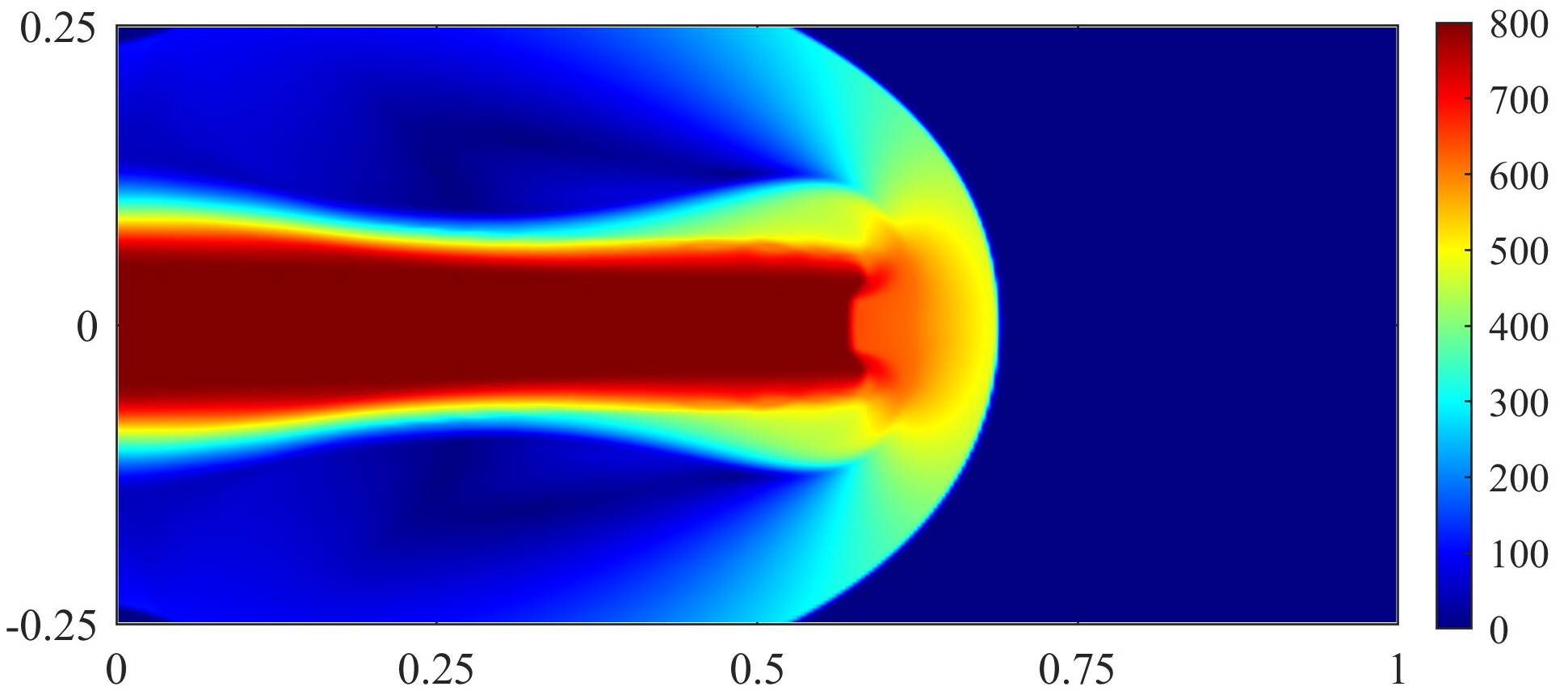}
 	\subcaption{\small Velocity magnitude}
 \end{subfigure}
		\caption{Numerical results at $t=0.001$ for the Mach 2000 jet problem.}
		\label{2D_jet2000}
	\end{figure}
\end{exmp}

\section{Conclusions}\label{sec:conclusion}

This paper has proposed the oscillation-eliminating discontinuous Galerkin (OEDG) method, a novel, robust, and efficient numerical approach for hyperbolic conservation laws. This method is inspired by the damping technique of Lu, Liu, and Shu \cite{lu2021oscillation,LiuLuShu_OFDG_system}. The core principle behind the OEDG approach involves an alternate progression between the conventional DG scheme and a new damping equation. This leads to an OE procedure effectively eliminating spurious oscillations subsequent to each Runge--Kutta stage. 
The new damping equation possesses both scale-invariant and evolution-invariant properties, pivotal in ensuring oscillation-free DG solutions across diverse scales and wave speeds. With the exact solver for the damping equation, the OE procedure's implementation becomes notably efficient, involving only simple  multiplications of modal coefficients by scalars. 
One significant contribution of our work is the rigorous optimal error estimates for the fully discrete OEDG method when applied to linear scalar conservation laws. To our knowledge, this might be the first effort on  fully-discrete error estimates for nonlinear DG methods with an automatic oscillation control mechanism. Furthermore, the OEDG method has provided fresh perspectives on the damping technique for oscillation control. 
It has revealed the role of the damping operator as a modal filter and bridges the damping and spectral viscosity techniques. 

The OEDG method offers several remarkable features. 
With its notable capacity to eliminate undesirable oscillations without necessitating problem-specific parameters, the OEDG method also eliminates the need for characteristic decomposition in hyperbolic systems. Furthermore, it retains many essential attributes of the conventional DG approach, such as conservation, optimal convergence rates, and superconvergence. 
Notably, even when confronted with strong shocks that lead to highly stiff damping terms, the OEDG method remains stable under the normal CFL condition. 
Another benefit of the OE procedure is its non-intrusive nature, allowing for easy integration into existing DG codes as an independent module. 
Our extensive numerical experiments have validated these theoretical findings, underscoring the effectiveness and superiority of the OEDG method on Cartesian meshes. The numerical tests on unstructured meshes will be reported in a separate paper.

It should be emphasized that the scale-invariant damping coefficients presented in \eqref{eq:1Dsigma} are not the sole choice. Indeed, they can be appropriately adjusted to preserve other important structures.  
As pointed out in \Cref{rem:LSI}, the exploration of locally scale-invariant OEDG schemes would be of significant interest. 
We intend to pursue these investigations in our subsequent research efforts.




\bibliography{refs}
\bibliographystyle{siamplain}
\end{document}